\newcommand{\BG}[1]{\mathbb{B}_{#1}}
\newcommand{\bq}{\boldsymbol{q}}
\newcommand{\bX}{\mathbb{X}}
\newcommand{\cO}{\mathcal{O}}
\newcommand{\End}{\mathrm{End}}
\newcommand{\dlm}{\underline{m}}
\newcommand{\dll}{\underline{l}}
\newcommand{\dle}{\underline{e}}
\newcommand{\fie }{\Bbbk}
\newcommand{\id }{\mathrm{id}}
\newcommand{\lelex }{\le _{\mathrm{lex}}}
\newcommand{\NA }{\mathcal{B}}
\newcommand{\ndN }{\mathbb{N}}
\newcommand{\ndZ }{\mathbb{Z}}
\newcommand{\olV}{\overline{V}}
\newcommand{\ord}{\mathrm{ord}}
\newcommand{\ot }{\otimes}
\newtheorem{theo}{Theorem}[section]
\newtheorem{prop}[theo]{Proposition}
\newtheorem{lemm}[theo]{Lemma}
\newtheorem{coro}[theo]{Corollary}
\theoremstyle{definition}
\newtheorem{defi}[theo]{Definition}
\newtheorem{exam}[theo]{Example}
\newtheorem{rema}[theo]{Remark}
\begin{document}
\title[Nichols algebras which are free algebras]
 {A characterization of Nichols algebras of diagonal type which are free algebras}
\author{I. Heckenberger}
\address{Philipps-Universit\"at Marburg,
FB Mathematik und Informatik,
Hans-Meerwein-Stra\ss e,
35032 Marburg, Germany.}
\email{heckenberger@mathematik.uni-marburg.de}

\author{Y. Zheng}
\address{Department of mathematics East China Normal University, Shanghai 200241, China.}
\email{52150601007@ecnu.cn}
\thanks{The second named author was supported by China Scholarship Council}

\date{}

\begin{abstract}
This paper is devoted to explore the freeness of Nichols algebras of diagonal type and to determine
the dimension of the kernel of the shuffle map considered as an operator acting on the free algebra.
Our proof is based on an inequality for the number of Lyndon words and on an identity for the shuffle map.
For a particular family of examples, the freeness of the Nichols algebra is characterized in terms of
solutions of a quadratic diophantine equation.

\textit{Keywords}: {Nichols algebras, free algebras, shuffle map, Lyndon words}
\end{abstract}
\maketitle

\section{Introduction}

Since their introduction in the late 70ies by W.~Nichols \cite{MR0506406},
the theory of Nichols algebras enjoyed increasing interest because of its
deep interrelation to different research areas. For an overview we refer to \cite{MR3728608}.
The strongest results have been
obtained for finite-dimensional Nichols algebras of diagonal type, mainly due to
the existence of the root system which was introduced in \cite{H2006}, based on
deep results of V.~Kharchenko \cite{MR1763385} on the structure of certain Hopf algebras
generated by group-like and skew-primitive elements.

A general, very difficult question is, what are the roots and their multiplicities
of a given Nichols algebra of diagonal type. In the case of finite-dimensional
Nichols algebras the answer is known: The roots are the real roots with respect to
the action of the Weyl groupoid, and their multiplicity is one.
The other extreme case is the one of the free algebra, where the root vectors are
parametrized by Lyndon words and appropriate powers of them.
Roots of the form $m\alpha_1+\alpha_2$ with $m\ge 0$ are determined using Rosso's
lemma \cite{MR1632802}.
Roots of the form $m\alpha_1+2\alpha_2$ and their multiplicities have been
determined by the authors in \cite{H2018root}. In this paper we address the question
when the multiplicity of a root is smaller than in the tensor algebra.
In particular, we provide a criterion to decide whether
a given Nichols algebra of diagonal type is a free algebra in terms of
polynomial equations for the entries of the braiding matrix.

The defining ideal of a Nichols algebra is spanned by the kernels of
the braided symmetrizer \cite{schauenburg1996}, which decomposes into
a product of shuffle maps. In \cite{duchamp1997},
the authors study identities involving shuffle maps. We use these identities
to study the freeness of Nichols algebras of diagonal type and to determine
the dimension of the kernel of the shuffle map.
With our results we relate the freeness of Nichols algebras of diagonal
type with braiding matrix $(q^{m_{ij}})_{1\le i,j\le n}$, $m_{ij}\in \ndZ$
for all $i,j$, to solutions of a diophantine equation.

In Section~\ref{basic}
we define a family $(P_{\dlm})_{\dlm\in \ndN_0^n,|\dlm|\ge 2}$ of elements
in the polynomial ring $\ndZ[p_{ij}\mid 1\le i,j\le n]$, where
$|(m_1,\dots,m_n)|=\sum_{i=1}^nm_i$.
Let now $\NA(V)$ be a Nichols algebra of diagonal type of rank $n$ with braiding matrix
$\bq =(q_{ij})_{1\le i,j\le n}\in (\fie^\times)^{n\times n}$, where $\fie $ is a field.

\begin{theo} (see Theorem \ref{theo})
We have $\NA(V)=T(V)$ if and only if $P_{\dlm}(\bq )\ne 0$ for all $\dlm\in \ndN_0^n$
with $|\dlm|\ge 2$.
\end{theo}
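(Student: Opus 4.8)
The plan is to reformulate the statement in terms of the braided (quantum) symmetrizers and to reduce it, degree by degree, to the injectivity of a single shuffle map. Write $N=|\dlm|$. Since $\NA(V)_\dlm=T(V)_\dlm$ holds trivially for $|\dlm|\le 1$, and since $\NA(V)_\dlm=T(V)_\dlm/\ker\mathfrak{S}_\dlm$ for the symmetrizer $\mathfrak{S}_\dlm\colon T(V)_\dlm\to T(V)_\dlm$, the equality $\NA(V)=T(V)$ is equivalent to the bijectivity of $\mathfrak{S}_\dlm$ for every $\dlm$ with $N\ge 2$. I would prove this by strong induction on $N$. Using the standard factorization of $\mathfrak{S}_\dlm$ as a composite of the degree-$(N-1)$ symmetrizer $\mathfrak{S}_{\dlm'}$ acting on $N-1$ of the tensor factors ($\dlm'$ running over the relevant sub-multidegrees) with a single shuffle map $\varsigma_\dlm$ — the one inserting one tensor letter into the remaining $N-1$ — one obtains that, once $\NA(V)$ is known to be free in all degrees ${}<N$, the map $\mathfrak{S}_\dlm$ is injective if and only if $\varsigma_\dlm$ is injective on $T(V)_\dlm$. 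Thus everything comes down to deciding the injectivity of the shuffle maps $\varsigma_\dlm$.

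The heart of the argument is a determinant computation: in a basis of $T(V)_\dlm$ adapted to factorizations of words into decreasing products of Lyndon words, I claim
\[
\det\varsigma_\dlm \;=\; (\text{unit})\cdot\!\!\prod_{\substack{\dlm'\le\dlm\\ |\dlm'|\ge 2}}\!\! P_{\dlm'}(\bq)^{c_{\dlm'}},
\qquad c_{\dlm'}\in\ndN,\ c_\dlm\ge 1,
\]
where the ``unit'' is a monomial in the $q_{ij}$ and $\dlm'\le\dlm$ is the componentwise order. To obtain this I would invoke the shuffle identity of \cite{duchamp1997}, which allows one to put $\varsigma_\dlm$ into block triangular form whose diagonal blocks are scalars, each a $q$-integer in the self-braiding $q_\ell$ of the corresponding Lyndon word $\ell$ (and its admissible powers); by the definition of the $P_{\dlm'}$ in Section~\ref{basic}, these $q$-integers are, up to monomials in the $q_{ij}$, exactly the values $P_{\dlm'}(\bq)$ with $\dlm'$ the multidegree of the relevant Lyndon power. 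The inequality for the number of Lyndon words is what pins down the exponents $c_{\dlm'}$ — they are counts of Lyndon words of the appropriate multidegree — and in particular guarantees that the genuinely new factor $P_\dlm(\bq)$ really occurs, with $c_\dlm\ge 1$, and is not cancelled.

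Granting this, both implications follow cleanly. For $\Leftarrow$, assume $P_{\dlm'}(\bq)\ne 0$ for all $\dlm'$ with $|\dlm'|\ge 2$. By the inductive hypothesis $\NA(V)$ is free in degrees ${}<N$, so $\mathfrak{S}_\dlm$ is injective iff $\varsigma_\dlm$ is; and $\det\varsigma_\dlm\ne 0$ since every factor $P_{\dlm'}(\bq)$ with $\dlm'\le\dlm$ is nonzero. Hence $\mathfrak{S}_\dlm$ is bijective, the induction goes through, and $\NA(V)=T(V)$. For $\Rightarrow$, suppose some $P_\dlm(\bq)=0$ and pick such a $\dlm$ with $N=|\dlm|$ minimal. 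Then $P_{\dlm'}(\bq)\ne 0$ for all $\dlm'$ with $2\le|\dlm'|<N$, so by the $\Leftarrow$ direction $\NA(V)$ is free below degree $N$; consequently $\mathfrak{S}_\dlm$ is injective iff $\varsigma_\dlm$ is. But now $\det\varsigma_\dlm=0$: the factor $P_\dlm(\bq)$ vanishes while, by minimality of $N$, every other factor $P_{\dlm'}(\bq)$ with $\dlm'<\dlm$ is nonzero. Hence $\varsigma_\dlm$, and therefore $\mathfrak{S}_\dlm$, is not injective, so $\NA(V)_\dlm\subsetneq T(V)_\dlm$ and $\NA(V)\ne T(V)$.

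The main obstacle is establishing the displayed determinant formula: one must extract from the shuffle identity a sufficiently explicit triangular form of $\varsigma_\dlm$, identify every diagonal scalar with the prescribed polynomial $P_{\dlm'}$ up to a monomial unit, and — using the Lyndon-word inequality — show that the multiplicities $c_{\dlm'}$ are exactly the relevant Lyndon-word counts, so that no spurious extra vanishing of $\det\varsigma_\dlm$ can occur. Controlling the interaction between the componentwise partial order on multidegrees and the lexicographic order on Lyndon words, and handling the admissible powers of each Lyndon word correctly, is the delicate part of the proof.
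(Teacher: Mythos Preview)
Your logical skeleton matches the paper: one reduces $\NA(V)=T(V)$ to the injectivity of the shuffle maps $\rho_m(S_{1,m-1})|V_{\dlm}$ by induction, and the heart of the matter is a determinant computation. However, the displayed formula you assert,
\[
\det\varsigma_\dlm \;=\; (\text{unit})\cdot\!\!\prod_{\substack{\dlm'\le\dlm\\ |\dlm'|\ge 2}}\!\! P_{\dlm'}(\bq)^{c_{\dlm'}},
\]
is not correct as stated, and the mechanism you propose to obtain it does not work. The map $\rho_m(S_{1,m-1})|V_{\dlm}$ does \emph{not} become scalar-triangular in a Lyndon basis, and the $P_{\dlm'}$ are not ``$q$-integers in the self-braiding of a Lyndon word'' (already $P_{(1,1)}=1-p_{12}p_{21}$ is not of that shape). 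More seriously, the irreducible factors of $P_{\dlm}$ can occur in the determinant with \emph{unequal} multiplicities, so the determinant is not a product of powers of the $P_{\dlm'}$. Concretely, for $\dlm=3\dle_i+4\dle_j$ the paper shows $A_{\dlm}=(1-Q_{\dlm})P_{\dlm}=(1-Q_{\dlm})^2(1+Q_{\dlm})(3)_{Q_{\dlm}}$; since the $P_{\dll}$ with $\dll<\dlm$ are coprime to $P_{\dlm}$ (Lemma~\ref{le:prime}), the factor $(1-Q_{\dlm})$ appears squared in $\det(\rho_7(S_{1,6})|V_{\dlm})$ while $(1+Q_{\dlm})$ appears once, so no exponent $c_{\dlm}$ fits.

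What the paper actually does is subtler and avoids needing such a clean factorization. The identity it takes from \cite{duchamp1997} is not a triangularization of $S_{1,m-1}$ but the relation
\[
(1-\sigma_{m-1}\cdots\sigma_1)\,S_{1,m-1}=S_{1,m-2}\,(1-\sigma_{m-1}\cdots\sigma_1^2)
\]
in $\ndZ\BG m$ (Lemma~\ref{basic Lemm}). The two ``cyclic'' operators $1-\sigma_{m-1}\cdots\sigma_1$ and $1-\sigma_{m-1}\cdots\sigma_1^2$ are what become block-diagonal along $\ndZ$-orbits of words, and their determinants are computed directly (Lemmas~\ref{det1}, \ref{det2}). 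Taking determinants in the identity over $R=\ndZ[p_{ij}]$ yields the recursion of Proposition~\ref{pr:detshuffle2},
\[
\det(\overline{\rho}_m(S_{1,m-1})|\olV_{\dlm})=A_{\dlm}\prod_{i:m_i>0}\det(\overline{\rho}_{m-1}(S_{1,m-2})|\olV_{\dlm-\dle_i}),
\]
and the Lyndon-word inequality (Theorem~\ref{basic theo}) is used not to count exponents $c_{\dlm'}$ but to show that the rational function $A_{\dlm}$ is in fact a polynomial (Lemma~\ref{le:Am}). The decisive step is then Proposition~\ref{pr:Am=0}: $P_{\dlm}$ is exactly the radical of $A_{\dlm}$, so $A_{\dlm}(\bq)=0\iff P_{\dlm}(\bq)=0$. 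Together with the coprimality of the $P_{\dll}$ for distinct $\dll$ (Lemma~\ref{le:prime}), this gives both implications (Lemma~\ref{det=0}, Proposition~\ref{prop:main}) without ever writing the determinant as a product of the $P_{\dlm'}$. In short: replace your triangularization claim by the braid identity above, and replace ``$\det\varsigma_\dlm$ is a product of $P_{\dlm'}$'s'' by ``the zero locus of $\det\varsigma_\dlm$ equals the union of the zero loci of the $P_{\dlm'}$ with $\dlm'\le\dlm$''; that weaker statement is what is true and what suffices.
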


Assume that $\fie $ has characteristic $0$. If $P_{\dlm}(\bq)=0$ for some
$\dlm \in \ndN_0^n$, then in Section~\ref{upperbound}
two numbers $n_1(\bq),n_2(\bq)\in \ndN_0$ are defined.

\begin{theo} (see Theorem \ref{theo dim})
Assume that $\fie $ has characteristic $0$.
Let $\dlm =(m_1,\dots,m_n)\in \ndN_0^n$ and $m=\sum_{i=1}^nm_i$ such that
$m\ge 2$, $P_{\dlm}(\bq)=0$, and $P_{\dll }(\bq)\ne0$ for all $\dll<\dlm$.
Then
\begin{align*}
\dim(\ker(\rho_m(S_{1,m-1})|V_{\dlm}))=n_1(\bq)-n_2(\bq).
\end{align*}
\end{theo}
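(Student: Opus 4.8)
The plan is to express $\dim(\ker(\rho_m(S_{1,m-1})|_{V_{\dlm}}))$ through the dimension of $\NA(V)$ in multidegree $\dlm$, to evaluate the latter by Kharchenko's PBW theorem, and to read off the answer from the combinatorics of Lyndon words, the minimality assumption on $\dlm$ being what makes everything exact. First I would invoke the decomposition of the braided symmetrizer into shuffle maps from \cite{duchamp1997}, \cite{schauenburg1996}, in the form $\mathfrak{S}_m=(\id_V\ot\mathfrak{S}_{m-1})\circ\rho_m(S_{1,m-1})$. Under the hypothesis $P_{\dll}(\bq)\ne0$ for all $\dll<\dlm$, the multigraded version of Theorem~\ref{theo} (which follows from its proof, only the values $P_{\dle}(\bq)$ with $\dle\le\dll$, $|\dle|\ge2$, being relevant in degree $\dll$) gives $\NA^{\dll}(V)=T^{\dll}(V)$ for every $\dll<\dlm$. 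In particular $\mathfrak{S}_{m-1}$ is injective on each component $(V^{\ot(m-1)})_{\dlm-e_i}$ with $m_i>0$; decomposing $V_{\dlm}=\bigoplus_{i\,:\,m_i>0}V_{e_i}\ot(V^{\ot(m-1)})_{\dlm-e_i}$ according to the first tensor factor shows that $\id_V\ot\mathfrak{S}_{m-1}$ is an automorphism of the finite-dimensional space $V_{\dlm}$. Consequently $\ker(\mathfrak{S}_m|_{V_{\dlm}})$ and $\ker(\rho_m(S_{1,m-1})|_{V_{\dlm}})$ have the same dimension, and since $\NA^{\dlm}(V)\cong V_{\dlm}/\ker(\mathfrak{S}_m|_{V_{\dlm}})$ (the defining ideal of $\NA(V)$ being $\bigoplus_{k\ge2}\ker\mathfrak{S}_k$, \cite{schauenburg1996}) we obtain
\begin{align*}
\dim(\ker(\rho_m(S_{1,m-1})|_{V_{\dlm}}))=\binom{m}{m_1,\dots,m_n}-\dim(\NA^{\dlm}(V)).
\end{align*}

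Next I would compute $\dim(\NA^{\dlm}(V))$ via Kharchenko's PBW theorem. It provides a PBW basis of $\NA(V)$ by ordered monomials in root vectors indexed by a set $L$ of Lyndon words with heights $h\colon L\to\{2,3,\dots,\infty\}$; for $T(V)$ one has $L$ equal to the set of all Lyndon words, all heights infinite, and then the number of PBW monomials of multidegree $\dll$ is the multinomial coefficient $\binom{|\dll|}{\dll}$ (a word and the non-increasing product of its Lyndon factorization determine each other). The $\NA(V)$-monomials of multidegree $\dll$ form a subset of the $T(V)$-ones, so $\NA^{\dll}(V)=T^{\dll}(V)$ for all $\dll<\dlm$ forces every Lyndon word of multidegree $<\dlm$ to belong to $L$ and forbids any $w\in L$ with $h(w)<\infty$ and $h(w)\deg w<\dlm$. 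Hence the PBW monomials of multidegree $\dlm$ present for $T(V)$ but absent for $\NA(V)$ are precisely (i) the Lyndon words of multidegree exactly $\dlm$ that do not lie in $L$, and (ii) the powers $w^{h(w)}$ of those $w\in L$ with $h(w)<\infty$ and $h(w)\deg w=\dlm$; in characteristic $0$ the height $h(w)$ is the order of the scalar $q_{ww}$ (and $\infty$ when that scalar is not a root of unity), so (ii) can occur only for divisors of $\gcd(m_1,\dots,m_n)$ that are at least $2$. Writing $N(\bq)$ for the total number of these monomials, we obtain $\dim(\ker(\rho_m(S_{1,m-1})|_{V_{\dlm}}))=N(\bq)$, and it remains to check that $N(\bq)=n_1(\bq)-n_2(\bq)$. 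This identification is where the inequality for the number of Lyndon words and the shuffle-map identities of \cite{duchamp1997} enter: they are what allow the two contributions above, which are Lyndon-word counts depending on $\bq$, to be matched with the explicit quantities $n_1(\bq),n_2(\bq)$ of Section~\ref{upperbound}, and thereby upgrade the a priori estimate $\dim(\ker)\le n_1(\bq)-n_2(\bq)$ proved there to an equality.

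I expect the main obstacle to be exactly this last matching step, i.e.\ showing that the upper bound of Section~\ref{upperbound} is attained under the minimality hypothesis. Concretely one must produce, for every missing PBW monomial, a corresponding element of $\ker(\rho_m(S_{1,m-1})|_{V_{\dlm}})$ — coming from the straightening identity expressing $[w]$ through lower PBW monomials when $w$ is a non-generator Lyndon word of multidegree $\dlm$, and from the relation annihilating $w^{h(w)}$ when $w$ has finite height — and prove that these elements are linearly independent, most naturally by a leading-term argument with respect to the lexicographic order on words; the assumption $P_{\dll}(\bq)\ne0$ for $\dll<\dlm$ is used both to exclude interference from relations of smaller degree and to control the leading terms of these straightening identities. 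A secondary technical point is the careful accounting in characteristic $0$, separating the generic case $\gcd(m_1,\dots,m_n)=1$ (only contributions of type (i)) from the case $\gcd(m_1,\dots,m_n)>1$ (type (ii) as well); this is presumably also where the quadratic diophantine equation of the final section originates.
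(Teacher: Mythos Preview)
Your reduction of $\ker(\rho_m(S_{1,m-1})|V_{\dlm})$ to $\ker(\rho_m(S_m)|V_{\dlm})$ via injectivity of $\id_V\ot\mathfrak{S}_{m-1}$ on $V_{\dlm}$ is correct, and the translation into a count $N(\bq)$ of missing PBW monomials is a legitimate reformulation. The gap is exactly where you locate it yourself: you never establish $N(\bq)=n_1(\bq)-n_2(\bq)$. Your plan to do so --- producing independent kernel elements from straightening relations and height cutoffs and then matching the count against $n_1(\bq),n_2(\bq)$ --- is not carried out, and it is not clear it can be without essentially recomputing the relations in degree $\dlm$, which is circular. In particular, determining which Lyndon words of degree $\dlm$ fail to be PBW generators, and which $w$ with $\deg w\mid\dlm$ have $h(w)\deg w=\dlm$, already requires knowing the kernel you are trying to measure; Kharchenko's theorem guarantees the PBW data exist but does not hand you the count in closed form.

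The paper avoids PBW and Nichols-algebra dimensions altogether and obtains the lower bound by a short linear-algebra argument. From the identity
\[
(1-\sigma_{m-1}\cdots\sigma_1)\,S_{1,m-1}=S_{1,m-2}\,(1-\sigma_{m-1}\cdots\sigma_1^2)
\]
(Lemma~\ref{basic Lemm}) and the injectivity of $\rho_m(S_{1,m-2})|V_{\dlm}$ (from $P_{\dll}(\bq)\ne0$ for $\dll<\dlm$) one gets
\[
\ker\big(\rho_m(1-\sigma_{m-1}\cdots\sigma_1)\,\rho_m(S_{1,m-1})\big|V_{\dlm}\big)
=\ker\big(\rho_m(1-\sigma_{m-1}\cdots\sigma_1^2)\big|V_{\dlm}\big).
\]
Now $\dim\ker(AB)\le\dim\ker A+\dim\ker B$ gives
\[
\dim\ker(\rho_m(S_{1,m-1})|V_{\dlm})\;\ge\;
\dim\ker(\rho_m(1-\sigma_{m-1}\cdots\sigma_1^2)|V_{\dlm})
-\dim\ker(\rho_m(1-\sigma_{m-1}\cdots\sigma_1)|V_{\dlm}).
\]
The two kernels on the right are computed \emph{exactly} by the orbit decompositions in Lemmas~\ref{det1} and \ref{det2}: they equal $n_2(\bq)$ and $n_1(\bq)$ respectively. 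Combined with the Smith-normal-form upper bound of Proposition~\ref{prop upperbound}, this finishes the proof. The Lyndon-word inequalities you invoke are used in the paper only to control the polynomials $A_{\dlm}$ feeding into the determinant and upper-bound computations, not for the lower bound; and the diophantine equation is an application (Example~\ref{Diophequation}) of the freeness criterion, unrelated to Theorem~\ref{theo dim}.
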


The organization of the paper is as follows.
In Section \ref{basic}, we recall some basic notions about Lyndon words and introduce notations.
We also recall the inequalities on the number of Lyndon words, which the paper is based on.
In Section \ref{shuffles}, we discuss the notion of a free prebraided module over a commutative ring and
compute the determinant of the shuffle map.
In Section \ref{freeness}, we formulate and prove our first main theorem.
In Section \ref{upperbound}, we determine an upper bound for the dimension of the kernel of shuffle map.
In Section \ref{dim}, we prove that this upper bound is a lower bound.

The paper was written during the visit of the second named author to Marburg University
supported by China Scholarship Council. The second named author thanks the department of
FB Mathematik and Informatik of Marburg University for hospitality.

\section{Basic Definitions and properties}
\label{basic}

Throughout this paper we write $\ndN$ and $\ndZ$ for the set of positive integers
and the set of integers, respectively. Let $\ndN_0=\ndN\cup\{0\}$.

We start with recalling necklaces and Lyndon words, and collect some notations.

Let $n\in \ndN$. For any $\dlm=(m_1,\dots,m_n)\in \ndN_0^n$ we write
$|\dlm|=\sum_{i=1}^nm_i$. If additionally $\dlm\ne 0$, then let
$\gcd(\dlm)$ be the greatest common divisor of $m_1,\dots,m_n$,
and if $|\dlm|\ge 2$, then let
$$N(\dlm)=\gcd\{m_i(m_i-1),m_jm_k \mid 1\le i,j,k\le n,j<k\}. $$
For any $1\le i\le n$ let
$\underline{e}_i=(\delta_{ij})_{1\le j\le n}\in \ndN_0^n$,
and for any $k\in \ndN_0$ and any
$\dlm =(m_1,\dots,m_n)\in \ndN_0^n\setminus \{0\}$
let $\dlm/k=(m_1/k,m_2/k,\ldots,m_n/k)$.

There is a partial ordering on $\ndN_0^n$
denoted by $\le$: $\dlm\le\dll$ if and only if $m_i\le l_i$ for all $1\le i\le n$.

Let $B$ be a set (called the alphabet) of $n$ elements denoted by
$b_1,b_2,\ldots,b_n$,
and let $\mathbb{B}$ and $\mathbb{B}^{\times}$ be the set of words and
non-empty words, respectively, with letters in $B$.
For $w=b_{i_1}b_{i_2}\cdots b_{i_s} \in \mathbb{B}$,
in which $b_j$ occurs  $m_j$ times, $1\le j\le n$, we write
 $\deg w=(m_1,m_2,\ldots,m_n)$ and call $\deg w$ the \textbf{degree} of $w$.

We fix a total order $\le $ on $B$. There is a total order $\lelex $
on $\mathbb{B}$ induced by $\le $, called the lexicographic order: For $u,v\in \mathbb{B}$,
one lets $u\lelex v$ if and only if either $v=uw$ for some $w\in \mathbb{B}$,
or there exist $w,u',v'\in \mathbb{B}$ and $x,y\in B$ such that $u=wxu'$, $v=wyv'$,
$x\le y$, and $x\ne y$.

A word $w\in\mathbb{B}^{\times}$ is called a \textbf{necklace} if for any decomposition
$w=uv$ with $u,v\in\mathbb{B}^{\times}$, $w \lelex vu$.
A word $w\in\mathbb{B}^{\times}$ is \textbf{Lyndon} if for any decomposition
$w=uv$, $u,v\in \mathbb{B}^{\times}$, $w \lelex v$.
For any $\dlm \in \ndN_0^n$ let $N_{\dlm }$ and $\ell_{\dlm}$ denote the number
of necklaces and Lyndon words, respectively, of degree $\dlm $.

\begin{rema}\label{reLy}
Any Lyndon word is a necklace, and for any necklace $w$ there is a unique pair
$(v,k)\in \mathbb{B}\times \ndN $ such that $v$ is Lyndon and $w=v^k$. Thus,
for any $\dlm \in \ndN_0^n\setminus\{0\}$,
\begin{align}\label{NL}
N_{\dlm}=\sum_{d\mid\gcd(\dlm)}\ell_{\dlm/d}.
\end{align}
\end{rema}

\begin{rema} \label{re:smallell}
  In\cite{HJ2018} and \cite{Joe1999} one can find explicit formulas
  for $N_{\dlm}$ and $\ell_{\dlm}$ for any $\dlm \in \ndN_0^n$.
  In particular,
  \begin{align}
  \label{eq:numbL1}
  &\ell_{\dle_i+k\dle_j}=1, ~\text{for all}~ k\in \ndN_0;\\
  \label{eq:numbL2}
  &\ell_{k\dle_j}=\delta_{k,1},~\text{for all}~ k\in \ndN_0.
  \end{align}
\end{rema}

In the remaining part of this section we will introduce and study
some polynomials, which are crucial for the paper.
For any ring $R$ and any $q\in R$ let $(0)_q=0$ and
$(m)_q=1+q+\cdots+q^{m-1}$ for any $m\in \ndN$.

\begin{defi}\label{def:Pm}
 For any $\dlm=(m_1,m_2,\ldots,m_n) \in \ndN_0^n$ with $|\dlm|\ge 2$
 let $P_{\dlm}\in \ndZ[p_{ij}\mid 1\le i,j\le n]$ be as follows:
 \begin{enumerate}
  \item If $\dlm=m_i\dle_i$, where $1\le i\le n$, and $m_i\in \ndN$, let
    $$P_{\dlm}=(m_i)_{p_{ii}};$$
  \item If $\dlm=\dle_i+m_j\dle_j$, where $1\le i,j\le n$, $i\ne j$,
  $m_j\in \ndN$, let
    $$P_{\dlm}=1-p_{jj}^{m_j-1}p_{ij}p_{ji};$$
  \item If $\dlm=2\dle_i+m_j\dle_j$, where $1\le i,j\le n$, $i\ne j$,
    $m_j\in \ndN$, let
    $$P_{\dlm}=1+p_{jj}^{m_j(m_j-1)/2}(-p_{ij}p_{ji})^{m_j}p_{ii};$$
  \item If $\dlm=3\dle_i+3\dle_j$, where $1\le i,j\le n$, $i\ne j$, let
       $$P_{\dlm}=(3)_{p_{ii}^2(p_{ij}p_{ji})^3p_{jj}^2};$$
  \item If $\dlm=3\dle_i+4\dle_j$, where $1\le i,j\le n$, $i\ne j$, let
  $$P_{\dlm}=(1-p_{ii}^2(p_{ij}p_{ij})^4p_{jj}^4)(3)_{p_{ii}(p_{ij}p_{ji})^2p_{jj}^2};$$
  \item If $\dlm=3\dle_i+6\dle_j$, where $1\le i,j\le n$, $i\ne j$, let
  $$P_{\dlm}=(1-p_{ii}(p_{ij}p_{ji})^3p_{jj}^{5})(3)_{p_{ii}^{2}(p_{ij}p_{ji})^6p_{jj}^{10}};$$
  \item If $\dlm=4\dle_i+4\dle_j$, where $1\le i,j\le n$, $i\ne j$, let
  $$P_{\dlm}=(1+p_{ii}^{3}(p_{ij}p_{ji})^4p_{jj}^{3})(1+p_{ii}^{6}(p_{ij}p_{ji})^8p_{jj}^{6});$$
  \item Otherwise, let
   $$P_{\dlm}=1-\prod_{1\le i\le n}p_{ii}^{m_i(m_i-1)}
   \prod_{1\le i<j\le n}(p_{ij}p_{ji})^{m_im_j}.$$
 \end{enumerate}
  Moreover, let
  $$ Q_{\dlm}=\prod_{1\le i\le n}p_{ii}^{m_i(m_i-1)/N(\dlm)}
  \prod_{1\le i<j\le n}(p_{ij}p_{ji})^{m_im_j/N(\dlm)}.$$
\end{defi}

\begin{rema}\label{Qfactor}
Let $\dlm \in \ndN_0^n$ with $|\dlm|\ge 2$.
By definition of $N(\dlm)$,
$Q_{\dlm}$ is a well-defined non-constant
monomial in $\ndZ[p_{ij}\mid 1\le i,j\le n]$,
and $Q_{\dlm}$ is not a non-trivial power of any other monomial.
Moreover,
$P_{\dlm}$ divides $1-Q_{\dlm}^{N(\dlm)}$. In particular,
$P_{\dlm}=1-Q_{\dlm}^{N(\dlm)}$ in the last case of Definition~\ref{def:Pm}.
\end{rema}

For any $i\in \ndN$ let $\Phi_i\in \ndZ[x]$ denote the $i$-th cyclotomic
polynomial, that is, the minimal polynomial of any primitive $i$-th root of $1$
in the complex numbers. Clearly, $x^k-1=\prod_{i\mid k}\Phi_i$ for any $k\in
\ndN$.

Next we describe the irreducible factors of the polynomials
$P_{\dlm}$.

\begin{lemm} \label{le:mmatrix}
	Let $D$ be a Euclidean domain, let
	$\dlm =(m_1,\dots,m_n)$ be a non-zero vector in $D^n$, and let
	$d=\gcd(m_1,\dots,m_n)$. Then there is a matrix $M\in D^{n\times n}$
	with $\dlm$ as its first row and determinant $d$.
\end{lemm}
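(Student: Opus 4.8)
The plan is to argue by induction on $n$. For $n=1$ the $1\times 1$ matrix $(m_1)$ has first row $\dlm$ and determinant $m_1=d$, so assume $n\ge 2$ and that the claim holds in dimension $n-1$. First I would reduce to the case $m_1\neq 0$: permuting the entries of $\dlm$ corresponds to right multiplication of a candidate matrix by a permutation matrix, and the sign this introduces in the determinant is removed by negating the second row (which is available since $n\ge 2$). So assume $m_1\neq 0$ and put $g=\gcd(m_1,m_2)\neq 0$. Since $D$ is Euclidean, there is a B\'ezout identity $\alpha m_1+\beta m_2=g$ with $\alpha,\beta\in D$, and as $g\mid m_1$, $g\mid m_2$ the matrix
\[
T=\begin{pmatrix}\alpha & -m_2/g\\ \beta & m_1/g\end{pmatrix}\in D^{2\times 2}
\]
has determinant $1$ and satisfies $(m_1,m_2)\,T=(g,0)$. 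Let $\hat T=\mathrm{diag}(T,I_{n-2})\in D^{n\times n}$; then $\det\hat T=1$, so $\hat T$ is invertible over $D$.

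Next I would apply the induction hypothesis to $(g,m_3,\dots,m_n)\in D^{n-1}$. Its entries generate the same ideal as $m_1,\dots,m_n$ (because $(m_1,m_2)=(g)$), so its greatest common divisor is again $d$; moreover its leading entry $g$ is non-zero. Hence there is $M'\in D^{(n-1)\times(n-1)}$ with first row $(g,m_3,\dots,m_n)$ and $\det M'=d$. Now form $N\in D^{n\times n}$ from $M'$ by inserting a new second row and a new second column, both equal to $\dle_2$: explicitly, the first row of $N$ is $(g,0,m_3,\dots,m_n)$, the second row is $(0,1,0,\dots,0)$, and rows $3,\dots,n$ are the rows $2,\dots,n-1$ of $M'$ with a $0$ inserted in position $2$. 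Cofactor expansion of $\det N$ along the second column gives $\det N=\det M'=d$.

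Finally I would set $M=N\hat T^{-1}\in D^{n\times n}$. Then $\det M=\det N\cdot\det\hat T^{-1}=d$, and the first row of $M$ is the first row of $N$ times $\hat T^{-1}$, which equals $(g,0,m_3,\dots,m_n)\hat T^{-1}=\bigl((g,0)T^{-1},m_3,\dots,m_n\bigr)=(m_1,m_2,m_3,\dots,m_n)=\dlm$ by the relation $(m_1,m_2)T=(g,0)$. This completes the induction. The proof rests on two simple points: the matrix $T\in D^{2\times 2}$ of determinant $1$ that merges the first two coordinates of $\dlm$ into their greatest common divisor, so that the number of variables drops by one, and the fact that bordering a square matrix by a unit row and a unit column does not change its determinant. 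The only mild subtlety is the reduction to $m_1\neq 0$, and this is automatically maintained along the induction since the new leading entry is $g\neq 0$. (Alternatively, one may divide $\dlm$ by $d$ to obtain a unimodular vector and then invoke the standard fact that a unimodular row over a principal ideal domain extends to a matrix of determinant $1$; multiplying its first row by $d$ afterwards yields determinant $d$.)
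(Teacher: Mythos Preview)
Your proof is correct and follows essentially the same approach as the paper: both reduce $\dlm$ to $(d,0,\dots,0)$ by invertible column operations over $D$ and then pull back the matrix $\mathrm{diag}(d,1,\dots,1)$. The paper phrases this in one line as a composition $f$ of elementary column transformations with $M=f^{-1}(\mathrm{diag}(d,1,\dots,1))$, whereas you unwind the same reduction explicitly as an induction that merges two coordinates at a time via the $2\times 2$ B\'ezout matrix $T$.
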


\begin{proof}
	View $\dlm$ as a $1\times n$-matrix. Choose a composition $f$
	of elementary column transformations which maps $\dlm$
	to the vector $(d,0,\dots,0)$. Let $M'$ be the diagonal matrix with
	diagonal entries $(d,1,\dots,1)$. Then $M=f^{-1}(M')$ satisfies the desired
	properties.
\end{proof}

\begin{rema}\footnote{Thanks to Ben Anthes for this remark.}
  Lemma\ref{le:mmatrix} also holds for principal ideal domains $D$.
  On the other hand, let $D=\fie [x,y]$ for some field $\fie $
  and let $\dlm=(x,y)$. Then $\gcd(\dlm)=1$, but there are no $a,b\in D$
  with $xb-ya=1$. Hence Lemma~\ref{le:mmatrix} does not hold for this $D$.
\end{rema}

\begin{lemm} \label{le:Laurentauto}
	Let $\dlm=(m_1,\dots,m_n)$ be a non-zero vector in $\ndZ^n$ with
	$\gcd(m_1,\dots,m_n)=1$.
	Then there is a ring automorphism $\varphi $ of the Laurent polynomial ring
	$\ndZ[x_1^{\pm 1},\dots,x_n^{\pm 1}]$
	with $\varphi (x_1)=x_1^{m_1}\cdots x_n^{m_n}$.
\end{lemm}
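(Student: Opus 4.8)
The plan is to deduce the statement directly from Lemma~\ref{le:mmatrix}. Since $\ndZ$ is a Euclidean domain and $\gcd(m_1,\dots,m_n)=1$, that lemma yields a matrix $M=(M_{ij})\in\ndZ^{n\times n}$ whose first row is $\dlm$ and whose determinant is $1$; in particular $M$ is invertible over $\ndZ$, so $M^{-1}\in\ndZ^{n\times n}$ as well. The automorphism $\varphi$ will be the monomial substitution attached to $M$.

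To construct it, I would invoke the universal property of the Laurent polynomial ring: for units $u_1,\dots,u_n$ in a commutative ring $A$ there is a unique ring homomorphism $\ndZ[x_1^{\pm1},\dots,x_n^{\pm1}]\to A$ with $x_i\mapsto u_i$. Applying this with $A=\ndZ[x_1^{\pm1},\dots,x_n^{\pm1}]$ and $u_i=x_1^{M_{i1}}x_2^{M_{i2}}\cdots x_n^{M_{in}}$ (a unit, being a monomial) produces a ring endomorphism $\varphi_M$ with $\varphi_M(x_i)=x_1^{M_{i1}}\cdots x_n^{M_{in}}$; by the choice of first row of $M$ this gives $\varphi_M(x_1)=x_1^{m_1}\cdots x_n^{m_n}$, as required. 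Doing the same with $M^{-1}$ in place of $M$ gives an endomorphism $\varphi_{M^{-1}}$, and a short computation on generators shows $\varphi_A\circ\varphi_B=\varphi_{BA}$ for any $A,B\in\ndZ^{n\times n}$; hence $\varphi_{M^{-1}}\circ\varphi_M=\varphi_{MM^{-1}}=\varphi_{I}=\id$ and likewise $\varphi_M\circ\varphi_{M^{-1}}=\id$, so $\varphi:=\varphi_M$ is an automorphism.

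The only point needing care is the index bookkeeping: one must confirm that the rule $M\mapsto\varphi_M$ is anti-multiplicative (so that inverting $M$ inverts the substitution) and that it is $M$ itself acting on exponent vectors — not $M^{T}$ — that makes $\varphi_M(x_1)$ come out as $x_1^{m_1}\cdots x_n^{m_n}$. This is routine: writing the exponent vector of $\varphi_M(x_i)$ as the $i$-th row of $M$, the exponent vector of $\varphi_A(\varphi_B(x_i))$ is the $i$-th row of $B$ multiplied on the right by $A$, i.e.\ the $i$-th row of $BA$, which gives the displayed composition rule. No genuine obstacle arises; the content of the lemma is entirely carried by Lemma~\ref{le:mmatrix}.
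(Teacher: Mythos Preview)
Your proposal is correct and follows essentially the same approach as the paper: invoke Lemma~\ref{le:mmatrix} to obtain an invertible integer matrix $M$ with first row $\dlm$, then define $\varphi(x_i)=x_1^{M_{i1}}\cdots x_n^{M_{in}}$. The paper simply asserts this defines a ring automorphism, whereas you spell out the universal property and the composition rule $\varphi_A\circ\varphi_B=\varphi_{BA}$ to verify invertibility; this is the same argument with more detail filled in.
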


\begin{proof}
	By Lemma~\ref{le:mmatrix} there is a matrix $M\in \ndZ^{n\times n}$ with
	$\dlm$ as its first row and with determinant $1$. Then
	$\varphi (x_i)=x_1^{m_{i1}}x_2^{m_{i2}}\cdots x_n^{m_{in}}$ for $1\le i\le n$
	defines a ring automorphism of
	$\ndZ[x_1^{\pm 1},\dots,x_n^{\pm 1}]$ as desired.
\end{proof}

\begin{lemm} \label{le:PhikQirred}
	For any $k\in \ndN$ and any $\dlm \in \ndN_0^n$ with $|\dlm|\ge 2$,
	the polynomial $\Phi_k(Q_{\dlm})\in \ndZ[p_{ij}\mid 1\le i,j\le n]$
	is irreducible. In particular, $Q_{\dlm}^k-1=\prod_{i\mid k}\Phi_i(Q_{\dlm})$
	is the unique factorization of $Q_{\dlm}^k-1$ into irreducibles, and each
	irreducible factor of $P_{\dlm}$ is of the form $\Phi_l(Q_{\dlm})$ for some
	$l\mid N(\dlm)$.
\end{lemm}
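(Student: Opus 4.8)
The key reduction is to the case $\gcd(\dlm)=1$, i.e.\ $Q_{\dlm}$ a ``primitive'' monomial, after which irreducibility of $\Phi_k(Q_{\dlm})$ follows from irreducibility of $\Phi_k$ together with a change of variables. Concretely, write $\dlm'$ for the vector whose entries are the exponents appearing in the monomial $Q_{\dlm}$; by Remark~\ref{Qfactor} these exponents have greatest common divisor $1$, so $Q_{\dlm}$ is a monomial $x_1^{a_1}\cdots x_N^{a_N}$ in the distinct variables $x_r$ that actually occur (among the $p_{ii}$ and the products $p_{ij}p_{ji}$), with $\gcd(a_1,\dots,a_N)=1$. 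The plan is: first I would reduce to showing that $\Phi_k(x_1^{a_1}\cdots x_N^{a_N})$ is irreducible in the polynomial ring $\ndZ[x_1,\dots,x_N]$ in those variables, since $\ndZ[p_{ij}\mid 1\le i,j\le n]$ is obtained from this by adjoining further algebraically independent indeterminates (the off-diagonal $p_{ij}$ enter $Q_{\dlm}$ only through the products $p_{ij}p_{ji}$, but adjoining $p_{ij}$ with $p_{ji}$ left free is a polynomial extension, and a polynomial extension of a UFD keeps irreducible elements irreducible when they do not involve the new variables).

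Next I would promote to the Laurent polynomial ring: $\Phi_k(x_1^{a_1}\cdots x_N^{a_N})$ is irreducible in $\ndZ[x_1,\dots,x_N]$ if and only if it is irreducible in $\ndZ[x_1^{\pm1},\dots,x_N^{\pm1}]$, because $\Phi_k(Q_{\dlm})$ has nonzero constant term (evaluate at $x_r=0$: since $k\ge 1$, $\Phi_k(0)=\pm 1$, in fact $\Phi_k(0)=1$ for $k\neq 1$ and $\Phi_1(0)=-1$) and is not divisible by any $x_r$, so passing to the localization neither merges associate classes of such an element nor destroys irreducibility. Now apply Lemma~\ref{le:Laurentauto}: since $\gcd(a_1,\dots,a_N)=1$ there is a ring automorphism $\varphi$ of $\ndZ[x_1^{\pm1},\dots,x_N^{\pm1}]$ with $\varphi(x_1)=x_1^{a_1}\cdots x_N^{a_N}$. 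Hence $\Phi_k(Q_{\dlm})=\varphi(\Phi_k(x_1))$, and irreducibility of $\Phi_k(Q_{\dlm})$ is equivalent to irreducibility of $\Phi_k(x_1)$ in $\ndZ[x_1^{\pm1},\dots,x_N^{\pm1}]$. The latter holds because $\Phi_k(x_1)$ is irreducible in $\ndZ[x_1]$ (a standard fact, Gauss's lemma plus irreducibility over $\ndQ$), is primitive, has nonzero constant term, and $\ndZ[x_1^{\pm1},\dots,x_N^{\pm1}]=\ndZ[x_1^{\pm1}][x_2^{\pm1},\dots,x_N^{\pm1}]$ is a Laurent extension of $\ndZ[x_1^{\pm1}]$ in which an element of $\ndZ[x_1]$ with nonzero constant term remains irreducible.

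Once $\Phi_k(Q_{\dlm})$ is known irreducible for all $k$, the remaining assertions are formal. From $x^k-1=\prod_{i\mid k}\Phi_i$ one substitutes $x=Q_{\dlm}$ to get $Q_{\dlm}^k-1=\prod_{i\mid k}\Phi_i(Q_{\dlm})$; since the ambient ring is a UFD and each factor is irreducible, and the factors are pairwise non-associate (they have distinct images, e.g.\ distinct behaviour of the degree in which $Q_{\dlm}$ appears, or: two distinct cyclotomic polynomials are coprime in $\ndZ[x]$ up to the prime dividing $k$, and coprimality is preserved under the injective substitution $x\mapsto Q_{\dlm}$), this is a factorization into pairwise non-associate irreducibles, hence the unique factorization. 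Finally, by Remark~\ref{Qfactor}, $P_{\dlm}$ divides $1-Q_{\dlm}^{N(\dlm)}=(-1)(Q_{\dlm}^{N(\dlm)}-1)=(-1)\prod_{l\mid N(\dlm)}\Phi_l(Q_{\dlm})$, so by unique factorization every irreducible factor of $P_{\dlm}$ is an associate of some $\Phi_l(Q_{\dlm})$ with $l\mid N(\dlm)$; absorbing the unit, it is $\Phi_l(Q_{\dlm})$.

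The main obstacle I anticipate is the bookkeeping in the first paragraph: making precise which monomials actually occur in $Q_{\dlm}$ in each of the eight cases of Definition~\ref{def:Pm} (the "generic" last case is uniform, but the sporadic cases $\dlm=3\dle_i+3\dle_j$, $4\dle_i+4\dle_j$, etc.\ have $N(\dlm)>1$ and a genuinely smaller monomial $Q_{\dlm}$), and checking that the map from $\ndZ[x_1,\dots,x_N]$ into $\ndZ[p_{ij}\mid 1\le i,j\le n]$ sending $x_r$ to the appropriate $p_{ii}$ or $p_{ij}p_{ji}$ does embed the relevant subring as a polynomial subring over which the rest is a polynomial (hence UFD-preserving) extension. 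This is essentially the content already recorded in Remark~\ref{Qfactor} — that $Q_{\dlm}$ is a non-constant monomial which is not a proper power — so once that remark is invoked the argument is as above.
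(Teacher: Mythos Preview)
Your overall strategy---pass to the Laurent ring, invoke Lemma~\ref{le:Laurentauto} to obtain an automorphism sending a single variable to $Q_{\dlm}$, and transport the classical irreducibility of $\Phi_k$---is exactly the paper's. The difference is that the paper applies Lemma~\ref{le:Laurentauto} directly in $\ndZ[p_{ij}^{\pm1}\mid 1\le i,j\le n]$ using all $n^2$ variables: writing $Q_{\dlm}=\prod_{i,j}p_{ij}^{m_{ij}}$ with $m_{ii}=m_i(m_i-1)/N(\dlm)$ and $m_{ij}=m_im_j/N(\dlm)$ for $i\ne j$, the definition of $N(\dlm)$ gives $\gcd_{i,j}(m_{ij})=1$, so the automorphism exists with no preliminary grouping of variables. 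Irreducibility of $\Phi_k(Q_{\dlm})$ in the Laurent ring then descends to $\ndZ[p_{ij}]$ because no $p_{ij}$ divides it.

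Your detour through auxiliary variables $x_r$ standing for the products $p_{ij}p_{ji}$ has a gap. You assert that $\ndZ[p_{ij}\mid i,j]$ is obtained from the subring $R_0=\ndZ[p_{ii},\,p_{ij}p_{ji}:i<j]$ by adjoining further algebraically independent indeterminates, but this is false: already for $n=2$, adjoining $p_{12}$ to $R_0$ does not produce $p_{21}$ (one would have to divide), while adjoining both $p_{12}$ and $p_{21}$ forces the relation $p_{12}\cdot p_{21}\in R_0$, so they are not independent over $R_0$. In general an irreducible element of a sub-UFD need not stay irreducible in an overring (e.g.\ $y$ under $\ndZ[y]\hookrightarrow\ndZ[x]$, $y\mapsto x^2$). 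One can rescue your route with a $\ndZ^{\binom{n}{2}}$-grading argument (grade $S$ by $\deg p_{ij}-\deg p_{ji}$ for $i<j$; any factorization of the degree-zero element $\Phi_k(Q_{\dlm})$ has homogeneous factors, and after stripping a monomial each lies in $R_0$), but this is extra work that the paper's direct use of Lemma~\ref{le:Laurentauto} on the full variable set sidesteps entirely.

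Finally, the ``bookkeeping obstacle'' you anticipate does not arise: only $P_{\dlm}$ is defined case by case in Definition~\ref{def:Pm}, while $Q_{\dlm}$ has a single uniform formula, so no case analysis is needed for this lemma.
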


\begin{proof}
	Let $k\in \ndN$ and $\dlm \in \ndN_0^n$ with $|\dlm|\ge 2$.
	For any $1\le i,j,l\le n$ let
	$$m_{ii}=m_i(m_i-1)/N(\dlm),\quad m_{jl}=m_jm_l/N(\dlm).$$
	Then $\gcd(m_{ij}\mid 1\le i,j\le n)=1$ and $Q_{\dlm}=\prod_{1\le i,j\le
	n}p_{ij}^{m_{ij}}$
	by construction. By Lemma~\ref{le:Laurentauto}
	there is a ring automorphism $\varphi $ of the Laurent polynomial ring
	$\ndZ[p_{ij}^{\pm 1}\mid 1\le i,j\le n]$ with $\varphi (p_{11})=
	Q_{\dlm}$.
	Thus $\Phi_k(Q_{\dlm})=\varphi (\Phi_k(p_{11}))$ is irreducible in
	$\ndZ[p_{ij}^{\pm 1}\mid 1\le i,j\le n]$.
	Since $\Phi_k(Q_{\dlm})$ is not divisible in
	$\ndZ[p_{ij}\mid 1\le i,j\le n]$ by any $p_{ij}$ with $1\le i,j\le n$,
	the polynomial $\Phi_k(Q_{\dlm})$ is irreducible.
\end{proof}

\begin{lemm}\label{le:prime}
  Let $\dlm,\dll\in \ndN_0^n$ with $|\dll|\ge 2$.
Suppose that there exist $1\le i<j\le n$ such that $m_i,m_j\ne 0$.
Then $P_{\dlm}$ and $P_{\dll}$
are relatively prime if and only if $\dlm\ne\dll$.
In particular,  $P_{\dlm}$ and $P_{\dll}$
are relatively prime whenever $\dll<\dlm$.
\end{lemm}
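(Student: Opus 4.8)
The plan is to prove both implications, the substantive one by contradiction, by reducing it to an equality of monomials. If $\dlm=\dll$ then $P_\dlm=P_\dll$ is a non-constant polynomial (inspect the cases of Definition~\ref{def:Pm}), hence a non-unit in $\ndZ[p_{ij}\mid 1\le i,j\le n]$, so $P_\dlm$ and $P_\dll$ are not relatively prime. For the converse I would assume $\dlm\ne\dll$ and suppose, for contradiction, that $P_\dlm$ and $P_\dll$ have a common irreducible factor $f$. Applying Lemma~\ref{le:PhikQirred} to $\dlm$ and to $\dll$, $f$ is associate to $\Phi_l(Q_\dlm)$ for some $l\mid N(\dlm)$ and also to $\Phi_{l'}(Q_\dll)$ for some $l'\mid N(\dll)$; since the units of $\ndZ[p_{ij}\mid 1\le i,j\le n]$ are $\pm1$, this yields $\Phi_l(Q_\dlm)=\pm\Phi_{l'}(Q_\dll)$.

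From this identity I would extract $Q_\dlm=Q_\dll$. For a non-constant monomial $Q$ and $k\in\ndN$, write $\Phi_k(Q)=\sum_ac_aQ^a$, where the $c_a$ are the coefficients of $\Phi_k$ and $c_{\varphi(k)}=1$ ($\varphi$ the Euler totient); since $\deg(Q^a)=a\deg Q$ is strictly increasing in $a$, the unique term of largest total degree in $\Phi_k(Q)$ is $Q^{\varphi(k)}$, with coefficient $1$. Comparing leading terms (with respect to total degree) on the two sides of $\Phi_l(Q_\dlm)=\pm\Phi_{l'}(Q_\dll)$ gives $Q_\dlm^{\varphi(l)}=Q_\dll^{\varphi(l')}$. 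By Remark~\ref{Qfactor}, $Q_\dlm$ and $Q_\dll$ are not proper powers of monomials, i.e.\ their exponent vectors $\mathbf v,\mathbf w$ are primitive (their entries have greatest common divisor $1$); from $\varphi(l)\mathbf v=\varphi(l')\mathbf w$ it follows that $\varphi(l)=\varphi(l')$ and $\mathbf v=\mathbf w$, that is, $Q_\dlm=Q_\dll$.

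Finally I would deduce $\dlm=\dll$ from $Q_\dlm=Q_\dll$, using the hypothesis that $m_i,m_j\ne0$ for some $i<j$; this is the one place the hypothesis enters, and the claim is genuinely false without it (for instance $P_{2\dle_1}$ divides $P_{4\dle_1}$). The off-diagonal variable $p_{ij}$ ($i<j$) occurs in $Q_\dlm$ exactly when $m_i,m_j\ne0$, so comparing the variables occurring in $Q_\dlm$ and $Q_\dll$ shows that $\dll$ also has at least two nonzero entries and that $S:=\{i\mid m_i\ne0\}=\{i\mid l_i\ne0\}$; comparing the exponents of $p_{ii}$ ($i\in S$) and of $p_{ij}$ ($i<j$ in $S$) gives, with $c:=N(\dlm)/N(\dll)>0$,
\begin{align*}
m_im_j=c\,l_il_j\ \ (i,j\in S,\ i\ne j),\qquad m_i(m_i-1)=c\,l_i(l_i-1)\ \ (i\in S).
\end{align*}
If $|S|\ge3$, then $m_i^2=(m_im_j)(m_im_k)/(m_jm_k)=c\,l_i^2$ for distinct $i,j,k\in S$, so $m_i/l_i=\sqrt c$ for all $i\in S$, whence $\dlm=\sqrt c\,\dll$, and the second relation then forces $\sqrt c=1$. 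If $S=\{1,2\}$, then either some $m_i=1$, in which case the second relation forces $l_i=1$ and the remaining relations give $\dlm=\dll$ at once, or $m_1,m_2,l_1,l_2\ge2$, in which case dividing the relations yields $(m_1-1)/m_2=(l_1-1)/l_2$ and $(m_2-1)/m_1=(l_2-1)/l_1$, a linear system whose only solution is $(m_1,m_2)=(l_1,l_2)$. In every case $\dlm=\dll$, contradicting $\dlm\ne\dll$. Since $\dll<\dlm$ forces $\dlm\ne\dll$, the final assertion follows as well.

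I expect the real work to lie in this last step — recovering $\dlm$ from the monomial $Q_\dlm$ — which needs the two-nonzero-entries hypothesis and a short case distinction; the earlier steps merely translate, via the cyclotomic factorisation of $P_\dlm$ provided by Lemma~\ref{le:PhikQirred}, the coprimality question into an equality of primitive monomials, after which everything is elementary arithmetic.
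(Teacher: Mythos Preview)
Your proof is correct and follows the same overall route as the paper: use Lemma~\ref{le:PhikQirred} to reduce a common irreducible factor to the equality $Q_{\dlm}=Q_{\dll}$, then recover $\dlm=\dll$ from the resulting exponent relations. Your extraction of $Q_{\dlm}=Q_{\dll}$ via leading terms is in fact more explicit than the paper's, which simply asserts this step.

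The only notable difference is in the final arithmetic. Where you split on $|S|$ and, for $|S|=2$, further on whether some $m_i=1$, the paper argues uniformly with the fixed pair $i<j$ from the hypothesis: from
\[
\frac{m_i(m_i-1)}{N(\dlm)}=\frac{l_i(l_i-1)}{N(\dll)},\quad
\frac{m_j(m_j-1)}{N(\dlm)}=\frac{l_j(l_j-1)}{N(\dll)},\quad
\frac{m_im_j}{N(\dlm)}=\frac{l_il_j}{N(\dll)},
\]
one combines the first and third to get $l_jm_i-l_im_j=l_j-m_j$, and the second and third to get $l_jm_i-l_im_j=m_i-l_i$; hence $m_i+m_j=l_i+l_j=:t$, and substituting $m_j=t-m_i$, $l_j=t-l_i$ back yields $t(m_i-l_i)=m_i-l_i$ with $t\ge 2$, so $m_i=l_i$ and $m_j=l_j$. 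Then $N(\dlm)=N(\dll)$, and the off-diagonal exponents force $m_k=l_k$ for all remaining $k$. This avoids your case distinction entirely; your approach is equally valid but a bit heavier.
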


\begin{proof}
	Recall that $P_{\dlm}$ is not constant. Thus, if $P_{\dlm}$
  and $P_{\dll}$ are relatively prime, then $\dlm\ne\dll$.

  Conversely, suppose that $P_{\dlm}$ and $P_{\dll}$ are not relatively prime.
  Then, by Remark~\ref{Qfactor}, $Q_{\dlm}^{N(\dlm)}-1$ and
	$Q_{\dll}^{N(\dll)}-1$
	are not relatively prime. Let $f$ be a non-constant
	common factor of $Q_{\dlm}^{N(\dlm)}-1$ and $Q_{\dll}^{N(\dll)}-2$.
  Lemma~\ref{le:PhikQirred} implies that there exist non-constant
	monic polynomials $p_1,p_2\in \ndZ[x]$ with $f=p_1(Q_{\dlm})=p_2(Q_{\dll})$.
	In particular, $Q_{\dlm}=Q_{\dll}$.
  Let $1\le i<j\le n$ with $m_i,m_j\ne 0$. Then $l_i,l_j\ne 0$, and
	the following equations hold:
\begin{align}
\label{Eq1}
&\frac{m_i(m_i-1)}{N(\dlm)}=\frac{l_i(l_i-1)}{N(\dll)},\\
\label{Eq2}
&\frac{m_j(m_j-1)}{N(\dlm)}=\frac{l_j(l_j-1)}{N(\dll)},\\
\label{Eq3}
&\frac{m_im_j}{N(\dlm)}=\frac{l_il_j}{N(\dll)}.
\end{align}
From Equation \eqref{Eq1} and \eqref{Eq3}, one gets
\begin{align*}
l_jm_i-l_im_j=l_j-m_j.
\end{align*}
Similarly, using Equation \eqref{Eq2} and \eqref{Eq3}, one gets
\begin{align}
\label{Eq4}
l_jm_i-l_im_j=m_i-l_i.
\end{align}
Thus $m_i+m_j=l_i+l_j$. Let $t=m_i+m_j=l_i+l_j$. Replacing $m_j$ with $t-m_i$ and
$l_j$ with $t-l_i$ in Equation \eqref{Eq4}, we get
$t(m_i-l_i)=m_i-l_i$, and hence $m_i=l_i$ because of $t>1$. Thus
$m_j=l_j$. It follows that $\dlm=\dll$.
\end{proof}

Now we pass to another family of polynomials, which are the main reason for our
interest in the family $(P_{\dlm})_{\dlm \in \ndN_0^n,|\dlm|\ge 2}$.

\begin{defi} \label{defAm}
For any $\dlm=(m_1,m_2,\dots,m_n)\in \ndN_0^n$, $|\dlm|\ge 2$, let
$$A_{\dlm}=\frac{\prod_{i:m_i>0}\prod_{k\mid \gcd(\dlm-\dle_i)}
(1-Q_{\dlm}^{N(\dlm)/k})^{\ell_{(\dlm-\dle_i)/k}}}
{\prod_{k\mid \gcd(\dlm)}(1-Q_{\dlm}^{N(\dlm)/k})^{\ell_{\dlm/k}}}.
$$
\end{defi}

Note that any $k$ with $k\mid \gcd(\dlm-\dle_i)$ divides $m_i-1$ and any $m_j$,
$j\ne i$, and hence $k\mid N(\dlm)$. Similarly, $k\mid \gcd(\dlm)$ implies that
$k\mid N(\dlm)$. Therefore the numerator and the denominator of $A_{\dlm}$ are
polynomials. If $\dlm=m_i\dle_i$ for some $1\le i\le n$ and $m_i\ge 2$, then
$Q_{\dlm}=p_{ii}$ and
\begin{align} \label{eq:Amiei}
	A_{m_i\dle _i}=\frac{1-Q_{\dlm}^{N(\dlm)/(m_i-1)}}{1-Q_{\dlm}^{N(\dlm)/m_i}}
	=\frac{1-Q_{\dlm}^{m_i}}{1-Q_{\dlm}^{m_i-1}}=\frac{(m_i)_{p_{ii}}}{(m_i-1)_{p_{ii}}}.
\end{align}
In order to show that every other $A_{\dlm}$ is a polynomial, we use
some results in \cite{HJ2018} about the number of Lyndon words.
Using Equation~\eqref{NL}, these can be restated as follows.

\begin{theo}(\cite[Lemma~4.1,~4.2,~Theorem~1.2]{HJ2018})\label{basic theo}
Let $\dlm\in \ndN_0^n$. Assume that $m_s,m_t\ne0$ for some $1\le s<t\le n$. Then,
\begin{enumerate}
\item
\begin{align*}
  \sum_{k\mid\gcd(\dlm)}\ell_{\dlm/k}\le\sum_{1\le i\le n,m_i>0}\ell_{\dlm-\dle_i}.
\end{align*}
\item
\begin{align}
&\sum_{k\mid \gcd(\dlm)}\ell_{\dlm/k}=\sum_{1\le i\le
n,m_i>0}\ell_{\dlm-\dle_i}
\end{align}
if and only if $\dlm$ is one of the cases $(2)$,$(3)$,$(4)$,$(5)$,$(6)$,$(7)$ in
Definition~\ref{def:Pm} different from $\dlm=\dle_s+\dle_t$.
\end{enumerate}
\end{theo}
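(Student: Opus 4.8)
The plan is to reduce both parts to counting estimates for Lyndon words, which is essentially the content of \cite{HJ2018}. First I would use Remark~\ref{reLy}: since $\sum_{k\mid\gcd(\dlm)}\ell_{\dlm/k}=N_{\dlm}$, part~(1) becomes the inequality $N_{\dlm}\le\sum_{i:m_i>0}\ell_{\dlm-\dle_i}$ and part~(2) asks for its equality cases. Write $m=|\dlm|$ and let $\binom{N}{\dll}$ denote the multinomial coefficient $N!/(l_1!\cdots l_n!)$. Two classical formulas are available: Witt's formula $\ell_{\dll}=\frac1{|\dll|}\sum_{d\mid\gcd(\dll)}\mu(d)\binom{|\dll|/d}{\dll/d}$, and the necklace count $N_{\dlm}=\frac1m\sum_{d\mid\gcd(\dlm)}\varphi(d)\binom{m/d}{\dlm/d}$ (with $\varphi$ the Euler totient), which follows from Burnside's lemma applied to the rotation action.

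The structural reason the inequality holds is that the ``leading'' terms already favour it. By the multinomial Pascal identity $\sum_{i:m_i>0}\binom{m-1}{\dlm-\dle_i}=\binom{m}{\dlm}$, the $d=1$ contributions satisfy
\[
\sum_{i:m_i>0}\frac1{m-1}\binom{m-1}{\dlm-\dle_i}-\frac1m\binom{m}{\dlm}=\frac1{m(m-1)}\binom{m}{\dlm}>0 .
\]
So I would write $\sum_{i:m_i>0}\ell_{\dlm-\dle_i}-N_{\dlm}$ as this positive main term plus a remainder involving only the divisors $d>1$. Every multinomial occurring in the remainder has upper entry $\le m/2$, hence by the inequality $\binom{dk}{d\dll}\ge\binom{k}{\dll}^{d}$ is at most $\binom{m}{\dlm}^{1/2}$; the point is then to track the constants precisely enough to conclude that the main term dominates except on a small explicit set of degrees, and to treat that set by hand. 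The same bookkeeping shows equality can occur only on this exceptional set, and since all but finitely many of its members are supported on two letters I would pass to a two-letter alphabet and use \eqref{eq:numbL1} and \eqref{eq:numbL2} for the degenerate Lyndon counts. Matching the result against Definition~\ref{def:Pm} then gives equality exactly at the degrees of cases~(2)--(7) other than $\dlm=\dle_s+\dle_t$, which is part~(2).

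A purely combinatorial alternative would be to construct an injection from the set of necklaces of degree $\dlm$ into the disjoint union, over $i$ with $m_i>0$, of the sets of Lyndon words of degree $\dlm-\dle_i$ --- for instance by deleting one canonically chosen letter from a suitable power of the Lyndon representative of a necklace and verifying that the result is again Lyndon --- the equality cases then being exactly the degrees where this map is a bijection. Either way the hard part is the same: the inequality is robust and comes out once the correction terms are controlled, whereas identifying the exact finite list of equality cases is delicate --- it contains the sporadic degrees $3\dle_i+3\dle_j$, $3\dle_i+4\dle_j$, $3\dle_i+6\dle_j$, $4\dle_i+4\dle_j$ (while, e.g., $3\dle_i+5\dle_j$ is \emph{not} among them) together with the two families $\dle_i+m_j\dle_j$ ($m_j\ge2$) and $2\dle_i+m_j\dle_j$. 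Since this is precisely \cite[Lemma~4.1,~4.2,~Theorem~1.2]{HJ2018} rephrased through \eqref{NL}, one may also simply invoke those results.
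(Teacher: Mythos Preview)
The paper does not prove this theorem at all: it is stated with the citation \cite[Lemma~4.1,~4.2,~Theorem~1.2]{HJ2018} and then used as a black box, with no proof environment following it. Your final sentence (``one may also simply invoke those results'') is therefore exactly what the paper does, and is the only part of your proposal that matches it.

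Everything preceding that sentence is extra: a sketch of how one might prove the inequality and pin down the equality cases directly, via Witt's formula, the Burnside necklace count, and a main-term-plus-remainder analysis (or alternatively an explicit injection). That sketch is plausible as a strategy but is not a proof --- you do not actually carry out the bound on the remainder, nor the case analysis that isolates the exceptional degrees, and the injection idea is only named, not constructed. Since the paper itself defers entirely to \cite{HJ2018}, there is nothing in the paper to compare these details against; if you want a self-contained argument you would have to reproduce the content of that reference in full, which your proposal does not do.
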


\begin{lemm}\label{le:Am}
 Let $\dlm=(m_1,m_2,\dots,m_n)\in \ndN_0^n$.
 If there exist $i,j\in\{1,2,\ldots,n\}$, $i\ne j$, such that $m_i\ne0$, $m_j\ne0$, then
  $A_{\dlm}$ is a polynomial in $\ndZ[p_{ij},1\le i,j\le n]$.
\end{lemm}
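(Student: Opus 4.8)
The plan is to show that the rational function $A_{\dlm}$ has trivial denominator by analyzing its zeros and poles factor by factor, using the irreducibility results and the Lyndon word inequalities. By Lemma~\ref{le:PhikQirred}, every irreducible factor of $1-Q_{\dlm}^{N(\dlm)/k}$ (for $k\mid N(\dlm)$) is of the form $\Phi_l(Q_{\dlm})$ for some $l\mid N(\dlm)/k$, and conversely $\Phi_l(Q_{\dlm})$ divides $1-Q_{\dlm}^{N(\dlm)/k}$ exactly when $l\mid N(\dlm)/k$. Since $\Phi_l(Q_{\dlm})$ is irreducible and $Q_{\dlm}$ is not a proper power of another monomial, each such cyclotomic factor appears with multiplicity exactly $1$ in $1-Q_{\dlm}^{N(\dlm)/k}$. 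So the multiplicity of a fixed irreducible $\Phi_l(Q_{\dlm})$ in the numerator of $A_{\dlm}$ is
\begin{align*}
\nu_l^{\mathrm{num}}=\sum_{i:m_i>0}\ \sum_{\substack{k\mid\gcd(\dlm-\dle_i)\\ l\mid N(\dlm)/k}}\ell_{(\dlm-\dle_i)/k},
\end{align*}
and similarly in the denominator
\begin{align*}
\nu_l^{\mathrm{den}}=\sum_{\substack{k\mid\gcd(\dlm)\\ l\mid N(\dlm)/k}}\ell_{\dlm/k}.
\end{align*}
To prove that $A_{\dlm}\in\ndZ[p_{ij}]$, it suffices to show $\nu_l^{\mathrm{num}}\ge\nu_l^{\mathrm{den}}$ for every $l\mid N(\dlm)$.

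Next I would rewrite both sums so the constraint $l\mid N(\dlm)/k$ becomes a constraint on the summation variable $k$. Fix $l\mid N(\dlm)$ and set $d_l=N(\dlm)/l$; the condition $l\mid N(\dlm)/k$ is equivalent to $k\mid d_l$. Using the observation from the text that $k\mid\gcd(\dlm-\dle_i)$ forces $k\mid N(\dlm)$ and $k\mid\gcd(\dlm)$ forces $k\mid N(\dlm)$, the two multiplicities become
\begin{align*}
\nu_l^{\mathrm{num}}=\sum_{i:m_i>0}\ \sum_{\substack{k\mid\gcd(\dlm-\dle_i)\\ k\mid d_l}}\ell_{(\dlm-\dle_i)/k},\qquad
\nu_l^{\mathrm{den}}=\sum_{\substack{k\mid\gcd(\dlm)\\ k\mid d_l}}\ell_{\dlm/k}.
\end{align*}
For each fixed $k\mid d_l$, I want to compare $\sum_{i:m_i>0,\,k\mid\gcd(\dlm-\dle_i)}\ell_{(\dlm-\dle_i)/k}$ with $\ell_{\dlm/k}$ when $k\mid\gcd(\dlm)$, and with $0$ otherwise. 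The key case is when $k\mid\gcd(\dlm)$: then for any $i$ with $m_i>0$ we have $k\mid\gcd(\dlm-\dle_i)$ iff $k\mid(m_i-1)$ iff $k\mid 1$ (since $k\mid m_i$), i.e. $k=1$. So for $k\ge 2$ dividing $\gcd(\dlm)$ the inner numerator sum over such $i$ is empty — which looks like it goes the wrong way. This means the term-by-$k$ comparison fails, and one must instead compare the \emph{whole} sums via a re-indexing $\dlm/k\mapsto$ (its own $\dlm-\dle_i$ decompositions), i.e. apply Theorem~\ref{basic theo}(1) to each vector $\dlm/k$ for $k\mid\gcd(\dlm)$, $k\mid d_l$.

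Concretely, Theorem~\ref{basic theo}(1) applied to $\dlm/k$ (which still has two nonzero coordinates, since $m_i,m_j\ne0$) gives $\sum_{k'\mid\gcd(\dlm/k)}\ell_{(\dlm/k)/k'}\le\sum_{i:m_i>0}\ell_{\dlm/k-\dle_i}$. The left side is $\sum_{k''\mid\gcd(\dlm),\,k\mid k''}\ell_{\dlm/k''}$ and the right side, after noting $\dlm/k-\dle_i=(\dlm-k\dle_i)/k$ and that $k\mid\gcd(\dlm-k\dle_i)$ while also $k\mid\gcd(\dlm-\dle_i)$ precisely for the relevant divisors, should telescope. The cleanest route is an induction on $d_l$ (equivalently on the number of divisors of $d_l$): summing Theorem~\ref{basic theo}(1) over $k\mid d_l$ with appropriate Möbius/inclusion bookkeeping yields exactly $\nu_l^{\mathrm{num}}\ge\nu_l^{\mathrm{den}}$. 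I expect the main obstacle to be precisely this combinatorial bookkeeping: correctly matching, for each $l$, the multiset of pairs $(i,k)$ contributing to the numerator with the divisors $k''\mid\gcd(\dlm)$, $k''\mid d_l$ contributing to the denominator, and verifying that the inequalities from Theorem~\ref{basic theo}(1) — one for each $\dlm/k$ — sum up without slack in the wrong direction. The special vector $\dlm=m_i\dle_i$ is excluded by hypothesis (we need two nonzero coordinates), and $\dlm=\dle_s+\dle_t$ is handled directly since then numerator and denominator are both products over trivial Lyndon-word counts by \eqref{eq:numbL1}–\eqref{eq:numbL2}, so no separate argument is needed there beyond a remark.
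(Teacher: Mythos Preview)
Your cyclotomic-factor setup is correct, but you are making the problem much harder than it is, and as written the proposal is incomplete: you identify the bookkeeping obstacle but never resolve it.

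The paper's proof is three lines and uses none of the factor-by-factor analysis. Observe that the $k=1$ part of the numerator alone is $\prod_{i:m_i>0}(1-Q_{\dlm}^{N(\dlm)})^{\ell_{\dlm-\dle_i}}$, so the full numerator is a multiple of $(1-Q_{\dlm}^{N(\dlm)})^{a}$ with $a=\sum_{i:m_i>0}\ell_{\dlm-\dle_i}$. On the other hand each factor $1-Q_{\dlm}^{N(\dlm)/k}$ in the denominator divides $1-Q_{\dlm}^{N(\dlm)}$, so the denominator divides $(1-Q_{\dlm}^{N(\dlm)})^{b}$ with $b=\sum_{k\mid\gcd(\dlm)}\ell_{\dlm/k}$. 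Theorem~\ref{basic theo}(1) gives $a\ge b$, hence the denominator divides the numerator. That is the whole argument.

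In your own language this same observation dissolves the obstacle you flagged: for every $l\mid N(\dlm)$ the $k=1$ contribution alone already gives $\nu_l^{\mathrm{num}}\ge a$, while trivially $\nu_l^{\mathrm{den}}\le b$, so one application of Theorem~\ref{basic theo}(1) handles all $l$ simultaneously. There is no need to apply the theorem to each $\dlm/k$, no M\"obius inversion, and no induction on $d_l$; the re-indexing you attempt (matching $\ell_{(\dlm-\dle_i)/k}$ with $\ell_{\dlm/k-\dle_i}$) does not line up anyway, since $(\dlm-\dle_i)/k\ne\dlm/k-\dle_i$ in general.
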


\begin{proof}
	The numerator of $A_{\dlm}$ is a multiple of
	$\prod_{i:m_i>0}(1-Q_{\dlm}^{N(\dlm)})^{\ell_{\dlm-\dle_i}}$
	and the denumerator of $A_{\dlm}$ is a divisor of
	$\prod_{k\mid \gcd(\dlm)}(1-Q_{\dlm}^{N(\dlm)})^{\ell_{\dlm/k}}$.
	Thus the claim follows from Theorem~\ref{basic theo}(1).
\end{proof}

\begin{prop} \label{pr:Am=0}
Let $\dlm=(m_1,m_2,\ldots,m_n) \in \ndN_0^n$ with $m_s,m_t>0$ for some $1\le
s<t\le n$. Then $P_{\dlm}$ is the product of the irreducible factors of
$A_{\dlm}$.
\end{prop}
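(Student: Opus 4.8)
The plan is to compare the rational function $A_{\dlm}$ with the polynomial $P_{\dlm}$ by analyzing their factorizations in terms of the irreducible polynomials $\Phi_l(Q_{\dlm})$. First I would invoke Lemma~\ref{le:PhikQirred}: since the numerator and denominator of $A_{\dlm}$ are both products of factors $1-Q_{\dlm}^{N(\dlm)/k}=\prod_{l\mid N(\dlm)/k}\Phi_l(Q_{\dlm})$, every irreducible factor of $A_{\dlm}$ (in either the numerator or the reduced form) is of the shape $\Phi_l(Q_{\dlm})$ with $l\mid N(\dlm)$. The same is true of $P_{\dlm}$ by Remark~\ref{Qfactor} and Lemma~\ref{le:PhikQirred}. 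Moreover each such $\Phi_l(Q_{\dlm})$ is squarefree and pairwise non-associate, so the entire problem reduces to comparing, for each divisor $l$ of $N(\dlm)$, the exponent of $\Phi_l(Q_{\dlm})$ occurring in $A_{\dlm}$ with its exponent in $P_{\dlm}$, and showing the first is positive precisely when the second is.

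Next I would compute the exponent of $\Phi_l(Q_{\dlm})$ in $A_{\dlm}$. Writing $N=N(\dlm)$, the exponent of $\Phi_l(Q_{\dlm})$ in $1-Q_{\dlm}^{N/k}$ is $1$ if $l\mid N/k$ (equivalently $k\mid N/l$) and $0$ otherwise. Hence the exponent of $\Phi_l(Q_{\dlm})$ in $A_{\dlm}$ equals
\begin{align*}
a_l:=\sum_{i:m_i>0}\ \sum_{\substack{k\mid \gcd(\dlm-\dle_i)\\ k\mid N/l}}\ell_{(\dlm-\dle_i)/k}\ -\ \sum_{\substack{k\mid \gcd(\dlm)\\ k\mid N/l}}\ell_{\dlm/k}.
\end{align*}
By Theorem~\ref{basic theo}(1) applied to the vector $\dlm/k$ for each common divisor $k$ of $\gcd(\dlm)$ and $N/l$ — note $\dlm/k$ still has two nonzero coordinates in positions $s,t$ — one obtains termwise that $a_l\ge 0$, so $A_{\dlm}$ is a polynomial (consistent with Lemma~\ref{le:Am}) and its squarefree kernel, i.e.\ the product of its irreducible factors, is $\prod_{l:\,a_l>0}\Phi_l(Q_{\dlm})$. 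It therefore suffices to prove that $a_l>0$ if and only if $\Phi_l(Q_{\dlm})\mid P_{\dlm}$.

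For the remaining equivalence I would split into the generic case and the exceptional cases of Definition~\ref{def:Pm}. In the generic (last) case $P_{\dlm}=1-Q_{\dlm}^{N}$, whose irreducible factors are exactly the $\Phi_l(Q_{\dlm})$ with $l\mid N$, and $a_N>0$ strictly because Theorem~\ref{basic theo}(2) shows the inequality for $\dlm$ itself (the $k=1$ term) is \emph{strict} here; and for proper divisors $l\mid N$ the $k=1$ summands already give $a_l\ge\sum_i\ell_{\dlm-\dle_i}-\sum_k\ell_{\dlm/k}\ge 0$ with strictness again from the strict inequality in Theorem~\ref{basic theo}(1) for $\dlm$. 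Hence $a_l>0$ for all $l\mid N$, matching $P_{\dlm}$. In the exceptional cases $(2)$--$(7)$ the identity in Theorem~\ref{basic theo}(2) holds (equality for $\dlm$ itself), so the $k=1$ contributions cancel, and $a_l$ is determined purely by the contributions of $k>1$; here one inspects the finitely many listed families directly, matching the number-of-Lyndon-words data from \cite{HJ2018} against the explicit factorizations of $P_{\dlm}$ given in Definition~\ref{def:Pm}. The case $\dlm=\dle_s+\dle_t$ (excluded from the equality in Theorem~\ref{basic theo}(2)) has $A_{\dlm}=1$ and $P_{\dlm}=1-Q_{\dlm}=1-p_{st}p_{ts}$, which does not contradict the statement since $\gcd(\dlm-\dle_s)=\gcd(\dle_t)=1$ forces the numerator and denominator to coincide — wait, this needs care, and in fact this subcase is precisely where one must check that $a_1=0$, i.e.\ that $A_{\dle_s+\dle_t}=1$, so that the claimed product of irreducible factors of $A_{\dlm}$ is the empty product, which is consistent with the proposition only under the convention that then $P_{\dlm}$ contributes no new irreducibles — so I would double-check whether the hypothesis of the proposition should additionally exclude $\dlm=\dle_s+\dle_t$ or whether the intended reading absorbs it.

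The main obstacle I expect is the bookkeeping in the exceptional cases $(3)$--$(7)$: there one cannot argue purely termwise, since $a_l$ is a genuine alternating sum over the divisors $k>1$ of both $\gcd(\dlm)$ and $\gcd(\dlm-\dle_i)$, and one must verify by explicit computation — using the closed formulas for $\ell_{\dlm}$ in \cite{HJ2018,Joe1999} — that the surviving exponents $a_l$ are exactly $1$ on precisely the set $\{l\mid N(\dlm): \Phi_l(Q_{\dlm})\mid P_{\dlm}\}$ read off from Definition~\ref{def:Pm}. For instance, for $\dlm=3\dle_i+3\dle_j$ one has $N(\dlm)=\gcd\{6,9\}=3$ and $Q_{\dlm}=p_{ii}^2(p_{ij}p_{ji})^3p_{jj}^2$, so $P_{\dlm}=(3)_{Q_{\dlm}}=\Phi_3(Q_{\dlm})$, and one must check $a_1=0$, $a_3=1$; similarly for $(4)$--$(7)$. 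I would organize these as a short case-by-case table rather than a uniform argument, since the families are finite and each reduces to elementary arithmetic with small Lyndon-word counts.
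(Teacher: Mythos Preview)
Your approach is essentially the one taken in the paper: both arguments reduce, via Lemma~\ref{le:PhikQirred}, to the fact that every irreducible factor of $A_{\dlm}$ and of $P_{\dlm}$ is of the form $\Phi_l(Q_{\dlm})$, then handle the generic case~(8) of Definition~\ref{def:Pm} using the strict inequality in Theorem~\ref{basic theo}, and dispatch the exceptional cases (2)--(7) by direct inspection. The paper simply computes $A_{\dlm}$ in closed form in each exceptional case rather than tracking your exponents $a_l$, but the content is the same.

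There is, however, a genuine computational slip in your treatment of $\dlm=\dle_s+\dle_t$, and it is what leads you to doubt the proposition. You assert $A_{\dlm}=1$, but the numerator of $A_{\dlm}$ has \emph{two} factors, one for $i=s$ and one for $i=t$: since $\gcd(\dlm-\dle_s)=\gcd(\dle_t)=1$ and symmetrically for $i=t$, the numerator is $(1-Q_{\dlm})^{\ell_{\dle_t}}(1-Q_{\dlm})^{\ell_{\dle_s}}=(1-Q_{\dlm})^2$, while the denominator is $(1-Q_{\dlm})^{\ell_{\dlm}}=1-Q_{\dlm}$. Hence $A_{\dlm}=1-Q_{\dlm}=P_{\dlm}$ and $a_1=1$, not $0$. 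This is exactly the $m_j=1$ instance of case~(2), and is fully consistent with the proposition; the point of excluding $\dle_s+\dle_t$ in Theorem~\ref{basic theo}(2) is precisely that the inequality is \emph{strict} there ($1<2$), so it behaves like the generic case. No extra hypothesis is needed.

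A smaller point: your ``termwise'' justification of $a_l\ge 0$ by applying Theorem~\ref{basic theo}(1) to $\dlm/k$ does not work as stated, since for $k>1$ one has $(\dlm-\dle_i)/k\ne \dlm/k-\dle_i$, and indeed no $k>1$ dividing $\gcd(\dlm)$ can divide $\gcd(\dlm-\dle_i)$. This is harmless for the proof, since you can simply invoke Lemma~\ref{le:Am} for the polynomiality of $A_{\dlm}$, which is all that is needed at that step.
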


\begin{proof}
	We follow Definition~\ref{def:Pm} case by case to compare $A_{\dlm}$ and
	$P_{\dlm}$. Then the claim follows directly from Lemma~\ref{le:PhikQirred}.

	(1) If $\dlm=m_i\dle_i$ for some $1\le i\le n$, $m_i\ge 2$, then the
	assumptions of the lemma are not fulfilled.

	(2)
	Assume that $\dlm=\dle_i+m_j\dle_j$, $1\le i,j\le n$, $i\ne j$, $m_j>0$.
	Then $N(\dlm)=m_j$, $Q_{\dlm}=p_{ij}p_{ji}p_{jj}^{m_j-1}$, and
\begin{align*}
	A_{\dlm}&=\frac{(1-Q_{\dlm}^{m_j})^{\ell_{\dle_i+(m_j-1)\dle_j}}
	\prod_{k\mid m_j}(1-Q_{\dlm }^{m_j/k})^{\ell_{m_j\dle_j/k}}}
	{(1-Q_{\dlm }^{m_j})^{\ell_{\dlm}}}\\
	&=\frac{(1-Q_{\dlm}^{m_j})(1-Q_{\dlm})}{1-Q_{\dlm}^{m_j}}
	=1-Q_{\dlm}=1-p_{ij}p_{ji}p_{jj}^{m_j-1}
  =P_{\dlm},
\end{align*}
where we used Equations \eqref{eq:numbL1} and \eqref{eq:numbL2}.

(3) Assume that $\dlm=2\dle_i+m_j\dle_j$, $1\le i,j\le n$, $i\ne j$, $m_j>0$.
Then $N(\dlm)=2$ and $Q_{\dlm}=p_{ii}(p_{ij}p_{ji})^{m_j}p_{jj}^{m_j(m_j-1)/2}$.
Moreover,
\begin{align*}
	&A_{\dlm}=\frac{(1-Q_{\dlm}^{N(\dlm)})^{\ell_{\dlm-\dle_i}}
	\prod_{k\mid\gcd(2,m_j-1)}(1-Q_{\dlm}^{N(\dlm)/k})^{\ell_{(\dlm-\dle_j)/k}}}
	{\prod_{k\mid \gcd(2,m_j)}(1-Q_{\dlm}^{N(\dlm)/k})^{\ell_{\dlm/k}}}.
\end{align*}
If $m_j$ is even, then
$\ell_{\dlm/2}+\ell_{\dlm}=\ell_{\dlm-\dle_i}+\ell_{\dlm-\dle_j}$ by
Theorem~\ref{basic theo}(2). Thus
\begin{align*}
A_{\dlm}&=\frac{(1-Q_{\dlm}^2)^{\ell_{\dlm-\dle_i}}
                (1-Q_{\dlm}^2)^{\ell_{\dlm-\dle_j}}}
               {(1-Q_{\dlm}^2)^{\ell_{\dlm}}(1-Q_{\dlm})^{\ell_{\dlm/2}}}
							 =1+Q_{\dlm}=P_{\dlm}
\end{align*}
by Equation \eqref{eq:numbL1}.

If $m_j$ is odd, then
$\ell_{(\dlm-\dle_i)}+\ell_{(\dlm-\dle_j)}=\ell_{\dlm}$ by
Theorem~\ref{basic theo}(2). Therefore
\begin{align*}
A_{\dlm}&=\frac{(1-Q_{\dlm}^2)^{\ell_{\dlm-\dle_i}}
(1-Q_{\dlm}^2)^{\ell_{\dlm-\dle_j}}(1-Q_{\dlm})^{\ell_{(\dlm-\dle_j)/2}}}
               {(1-Q_{\dlm}^2)^{\ell_{\dlm}}}
							 =1-Q_{\dlm}=P_{\dlm}
\end{align*}
by Equation \eqref{eq:numbL1}.

(4)
Assume that $\dlm=3\dle_i+3\dle_j$ with $1\le i<j\le n$. Then $N(\dlm)=3$,
$Q_{\dlm}=p_{ii}^2(p_{ij}p_{ji})^3p_{jj}^2$, and $\ell_{(1,1)}+\ell_{(3,3)}
=\ell_{(2,3)}+\ell_{(3,2)}$ by Theorem~\ref{basic theo}(2). Thus
\begin{align*}
	A_{\dlm}&=\frac{\prod_{k\mid\gcd(2,3)}(1-Q_{\dlm }^{3/k})^{\ell_{(2,3)/k}}
	\prod_{k\mid\gcd(3,2)}(1-Q_{\dlm}^{3/k})^{\ell_{(3,2)/k}}}
	{\prod_{k\mid\gcd(3,3)}(1-Q_{\dlm}^{3/k})^{\ell_{(3,3)/k}}}\\
	&=\frac{(1-Q_{\dlm }^3)^{\ell_{(2,3)}+\ell_{(3,2)}}}
	{(1-Q_{\dlm }^3)^{\ell_{(3,3)}}(1-Q_{\dlm})^{\ell_{(1,1)}}}
	=\frac{(1-Q_{\dlm}^3)^{\ell_{(1,1)}}}{(1-Q_{\dlm})^{\ell_{(1,1)}}}
	=(3)_{Q_{\dlm}}=P_{\dlm}.
\end{align*}

(5)
If $\dlm=3\dle_i+4\dle_j$ with $1\le i,j\le n$, $i\ne j$, then
$N(\dlm)=6$, $Q_{\dlm}=p_{ii}(p_{ij}p_{ji})^2p_{jj}^2$, and
$\ell_{(3,4)}=\ell_{(2,4)}+\ell_{(3,3)}$. Thus
\begin{align*}
	A_{\dlm}&=\frac{\prod_{k\mid\gcd(2,4)}(1-Q_{\dlm}^{6/k})^{\ell_{(2,4)/k}}
	\prod_{k\mid \gcd(3,3)}(1-Q_{\dlm}^{6/k})^{\ell_{(3,3)/k}}}
	{\prod_{k\mid\gcd(3,4)}(1-Q_{\dlm}^{6/k})^{\ell_{(3,4)/k}}}\\
	&=\frac{(1-Q_{\dlm}^6)^{\ell_{(2,4)}}(1-Q_{\dlm}^3)^{\ell_{(1,2)}}
	(1-Q_{\dlm}^6)^{\ell_{(3,3)}}(1-Q_{\dlm}^2)^{\ell_{(1,1)}}}
	{(1-Q_{\dlm}^6)^{\ell_{(3,4)}}}\\
	&=(1-Q_{\dlm}^2)(1-Q_{\dlm}^3)=(1-Q_{\dlm})^2(1+Q_{\dlm})(3)_{Q_{\dlm}}
	=(1-Q_{\dlm})P_{\dlm}.
\end{align*}

(6)
If $\dlm=3\dle_i+6\dle_j$ with $1\le i,j\le n$, $i\ne j$, then
$N(\dlm)=6$, $Q_{\dlm}=p_{ii}(p_{ij}p_{ji})^3p_{jj}^5$, and
$\ell_{(3,6)}+\ell_{(1,2)}=\ell_{(2,6)}+\ell_{(3,5)}$. Thus
\begin{align*}
	A_{\dlm}&=\frac{\prod_{k\mid\gcd(2,6)}(1-Q_{\dlm}^{6/k})^{\ell_{(2,6)/k}}
	\prod_{k\mid \gcd(3,5)}(1-Q_{\dlm}^{6/k})^{\ell_{(3,5)/k}}}
	{\prod_{k\mid\gcd(3,6)}(1-Q_{\dlm}^{6/k})^{\ell_{(3,6)/k}}}\\
	&=\frac{(1-Q_{\dlm}^6)^{\ell_{(2,6)}}(1-Q_{\dlm}^3)^{\ell_{(1,3)}}
	(1-Q_{\dlm}^6)^{\ell_{(3,5)}}}
	{(1-Q_{\dlm}^6)^{\ell_{(3,6)}}(1-Q_{\dlm}^2)^{\ell_{(1,2)}}}
	=\frac{(1-Q_{\dlm}^6)(1-Q_{\dlm}^3)}{1-Q_{\dlm}^2}\\
	&=(1-Q_{\dlm})(3)_{Q_{\dlm}}^2(3)_{-Q_{\dlm}}
	=(3)_{Q_{\dlm}}P_{\dlm}.
\end{align*}

(7)
If $\dlm=4\dle_i+4\dle_j$ with $1\le i,j\le n$, $i\ne j$, then
$N(\dlm)=4$, $Q_{\dlm}=p_{ii}^3(p_{ij}p_{ji})^4p_{jj}^3$, and
$\ell_{(1,1)}+\ell_{(2,2)}+\ell_{(4,4)}=\ell_{(3,4)}+\ell_{(4,3)}$. Thus
\begin{align*}
	A_{\dlm}&=\frac{\prod_{k\mid\gcd(3,4)}(1-Q_{\dlm}^{4/k})^{\ell_{(3,4)/k}}
	\prod_{k\mid \gcd(4,3)}(1-Q_{\dlm}^{4/k})^{\ell_{(4,3)/k}}}
	{\prod_{k\mid\gcd(4,4)}(1-Q_{\dlm}^{4/k})^{\ell_{(4,4)/k}}}\\
	&=\frac{(1-Q_{\dlm}^4)^{\ell_{(3,4)}}(1-Q_{\dlm}^4)^{\ell_{(4,3)}}}
	{(1-Q_{\dlm}^4)^{\ell_{(4,4)}}(1-Q_{\dlm}^2)^{\ell_{(2,2)}}
	(1-Q_{\dlm})^{\ell_{(1,1)}}
  }
	=\frac{(1-Q_{\dlm}^4)^2}{(1-Q_{\dlm}^2)(1-Q_{\dlm})}\\
	&=(1+Q_{\dlm}^2)^2(1+Q_{\dlm})=(1+Q_{\dlm}^2)P_{\dlm}
\end{align*}
since $\ell_{(2,2)}=1$.

(8)
Now we suppose $\dlm$ is not equal to any of the above cases.
By following the proof of Lemma~\ref{le:Am} and using Theorem~\ref{basic theo}
we conclude that $Q_{\dlm}^{N(\dlm)}-1$ divides $A_{\dlm}$. Moreover, every
irreducible factor of $A_{\dlm}$ is a factor of $P_{\dlm}=1-Q_{\dlm }^{N(\dlm)}$.
Since $P_{\dlm}$ is a product of pairwise non-associated irreducible factors,
we conclude that $P_{\dlm}$ contains every irreducible factor of $A_{\dlm}$
precisely once.

Thus the proof is completed.
\end{proof}

\begin{coro}\label{cor:prime}
Let $\dlm,\dll\in \ndN_0^n$ be different from $m\dle_i$ for all $m\ge 0$ and
$1\le i\le n$.
Then $A_{\dlm}$ and $A_{\dll}$
are relatively prime if and only if $\dlm\ne\dll$. In particular,
$A_{\dlm}$ and $A_{\dll}$
are relatively prime whenever $\dll<\dlm$.
\end{coro}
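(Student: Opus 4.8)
The plan is to deduce the statement about the polynomials $A_{\dlm}$ from the analogous statement for the $P_{\dlm}$, which is already available as Lemma~\ref{le:prime}.

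First I would unpack the hypothesis: saying that $\dlm$ differs from $m\dle_i$ for all $m\ge 0$ and $1\le i\le n$ means precisely that $\dlm\ne 0$ and $\dlm$ is not supported on a single coordinate, so there exist indices $1\le s<t\le n$ with $m_s,m_t>0$, and in particular $|\dlm|\ge 2$; the same holds for $\dll$. Hence $A_{\dlm}$ is a polynomial by Lemma~\ref{le:Am}, and by Proposition~\ref{pr:Am=0} the polynomial $P_{\dlm}$ is the product of the irreducible factors of $A_{\dlm}$. Moreover $P_{\dlm}$ is squarefree, its irreducible factors being pairwise non-associated, by Remark~\ref{Qfactor} together with Lemma~\ref{le:PhikQirred}. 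Therefore $A_{\dlm}$ and $P_{\dlm}$ have the same set of irreducible factors, and likewise for $\dll$, so $A_{\dlm}$ and $A_{\dll}$ are relatively prime if and only if $P_{\dlm}$ and $P_{\dll}$ are relatively prime.

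Now I would invoke Lemma~\ref{le:prime}, whose hypotheses are met since $\dlm$ has two nonzero coordinates and $|\dll|\ge 2$; it gives that $P_{\dlm}$ and $P_{\dll}$ are relatively prime if and only if $\dlm\ne\dll$. Combining the last two observations yields the first assertion, and the ``in particular'' clause follows at once since $\dll<\dlm$ implies $\dll\ne\dlm$. The argument is essentially bookkeeping: the only point to watch is that the hypothesis ``$\dlm$ is not of the form $m\dle_i$'' is exactly what makes both Proposition~\ref{pr:Am=0} and Lemma~\ref{le:prime} applicable, and there is no real obstacle, since the substantial work — identifying the radical of $A_{\dlm}$ with $P_{\dlm}$ and establishing the coprimality of the $P_{\dlm}$ — has already been carried out.
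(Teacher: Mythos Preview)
Your proof is correct and follows exactly the approach of the paper, which simply cites Lemma~\ref{le:prime} and Proposition~\ref{pr:Am=0}; you have merely spelled out the connective reasoning (that $P_{\dlm}$ is the radical of $A_{\dlm}$, so coprimality transfers) in more detail than the paper does.
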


\begin{proof}
The claim follows from Lemma~\ref{le:prime} and Proposition~\ref{pr:Am=0}.
\end{proof}

\section {The shuffle map over commutative rings}
\label{shuffles}

In this section let $R$ be a unital commutative ring. We calculate
the determinant of the shuffle map over $R$.

\begin{defi}
Let $\overline{V}$ be a finitely generated free module over $R$
and let $\bar{c}:\overline{V}\ot _R\overline{V}\rightarrow \overline{V}\ot _R\overline{V}$
be an $R$-module endomorphism of $\overline{V}\ot _R\overline{V}$.
We say that $(\overline{V},\bar{c})$ is a \emph{free prebraided module of diagonal type over} $R$,
if there exist a basis $x_1,x_2,\ldots,x_n$ of $\overline{V}$ and
$(\overline{q}_{ij})_{1\le i,j\le n}\in R^{n\times n}$ with
  $$\bar{c}(x_i\otimes x_j)=\overline{q}_{ij} x_j\otimes x_i \quad \text{for all $i,j$.} $$
\end{defi}

Let $n\in \ndN$ and let $(\overline{V},\overline{c})$ be a free prebraided module
of diagonal type with basis $x_1,\dots,x_n$. Let $I=\{1,2,\ldots,n\}$ and
$(\overline{q}_{ij})_{i,j\in I}\in R^{n\times n}$. Assume that
$$\overline{c}(x_i\otimes x_j)=\overline{q}_{ij} x_j\otimes x_i$$
for all $i,j\in I$. Let $\overline{V}^{\otimes k}$ denote the $k$-fold
tensor product of $\overline{V}$ over $R$ and let
$T(\overline{V})=\bigoplus_{k=0}^{\infty} \overline{V}^{\otimes k}$.
Note that $\overline{V}^{\otimes k}$ is a free module over $R$
for all $k\in \ndN$.

For any $\dlm\in \ndN^n$ let $\mathbb{X}_{\dlm}$ denote the set of words over $I$
of degree $\dlm$.
Let $\alpha_1,\alpha_2,\ldots,\alpha_n$ be the standard basis of $\ndZ^n$. Then $T(\overline{V})$ admits a $\ndZ^n$-grading given by $\deg x_i=\alpha_i$, for all $i\in I$. Thus
for any $i_1i_2\cdots i_l\in \mathbb{X}_{\dlm}$, the degree of
$x_{i_1}x_{i_2}\cdots x_{i_l}$ is $\sum_{j=1}^l \alpha_{i_j}$,
and we write $\deg x$ for the degree
of any homogeneous element $x$ of $T(\overline{V})$.
For any $\dlm \in \ndN_0^n$
let $\olV_{\dlm}$ denote the $\ndZ^n$-homogeneous
component of $T(\overline{V})$ of degree $\dlm$.

For any $k\ge 2$, let $\mathbb{B}_k$ denote the monoid which is generated by generators
$\sigma_1,\sigma_2,\ldots \sigma_{k-1}$ and relations
\begin{enumerate}
  \item $\sigma_i\sigma_j=\sigma_j\sigma_i$ for
  $i,j\in\{1,2,\ldots, k-1\}$ with $|i-j|\ge 2$, and
  \item $\sigma_i\sigma_{i+1}\sigma_i=\sigma_{i+1}\sigma_{i}\sigma_{i+1}$
  for $1\le i\le k-2$.
\end{enumerate}
Let $S_{1,0}=1 \in \mathbb{B}_k$, and for any $1\le m<k$ let
 $$ S_{1,m}=1+\sigma_1+\sigma_2\sigma_1+\cdots
     +\sigma_m\sigma_{m-1}\cdots \sigma_1. $$
 A variant of the following equation appeared already in \cite[Lemma~6.12]{duchamp1997}.
\begin{lemm}\label{basic Lemm}
For any $k>m\ge 1$ the following equation holds in $\BG k$.
\begin{align*}
		(1-\sigma_m\cdots \sigma_2\sigma_1)S_{1,m}
		=S_{1,m-1}(1-\sigma_m\cdots \sigma_2\sigma_1^2).
\end{align*}
\end{lemm}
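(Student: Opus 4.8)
The plan is to verify the identity
\[
(1-\sigma_m\cdots\sigma_2\sigma_1)S_{1,m}=S_{1,m-1}(1-\sigma_m\cdots\sigma_2\sigma_1^2)
\]
by direct manipulation in the monoid algebra $\ndZ[\BG k]$, expanding both sides using the definition $S_{1,m}=\sum_{j=0}^{m}\sigma_j\sigma_{j-1}\cdots\sigma_1$ (with the $j=0$ term equal to $1$). First I would introduce the shorthand $t_j=\sigma_j\sigma_{j-1}\cdots\sigma_1$ for $1\le j\le m$ and $t_0=1$, so that $S_{1,m}=\sum_{j=0}^{m}t_j$ and $S_{1,m-1}=\sum_{j=0}^{m-1}t_j$. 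The left-hand side becomes $\sum_{j=0}^{m}t_j-\sum_{j=0}^{m}t_mt_j$, and the right-hand side becomes $\sum_{j=0}^{m-1}t_j-\sum_{j=0}^{m-1}t_j\sigma_1 t_m$ (using $t_m\sigma_1^2\cdots$, more precisely $\sigma_m\cdots\sigma_2\sigma_1^2 = t_m\sigma_1$ since $t_m=\sigma_m\cdots\sigma_1$). So the claim reduces to
\[
t_m+\sum_{j=0}^{m-1}t_j-\sum_{j=0}^{m}t_mt_j=\sum_{j=0}^{m-1}t_j-\sum_{j=0}^{m-1}t_jt_m\sigma_1,
\]
i.e.\ after cancelling $\sum_{j=0}^{m-1}t_j$ and noting the $j=m$ term $t_mt_m$ on the left, one must show $t_m-\sum_{j=0}^{m}t_mt_j=-\sum_{j=0}^{m-1}t_jt_m\sigma_1$, or equivalently
\[
t_m=t_m\sum_{j=0}^{m}t_j-\sum_{j=0}^{m-1}t_jt_m\sigma_1.
\]

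The main work is then a telescoping/braid-relation computation showing $t_mt_j = t_{j-1}t_m\sigma_1$ for $1\le j\le m$, together with the boundary term $t_mt_0=t_m$. Granting that relation, $t_m\sum_{j=0}^{m}t_j=t_m+\sum_{j=1}^{m}t_mt_j=t_m+\sum_{j=1}^{m}t_{j-1}t_m\sigma_1=t_m+\sum_{j=0}^{m-1}t_jt_m\sigma_1$, which immediately gives the displayed identity. So the heart of the proof is the single lemma-level claim
\[
t_mt_j=t_{j-1}t_m\sigma_1\qquad(1\le j\le m).
\]
I would prove this by induction on $j$, or more transparently by repeatedly pushing the leading block of $t_m$ past $t_j$ using the braid relations: since $t_j=\sigma_j\cdots\sigma_1$ only involves $\sigma_1,\dots,\sigma_j$, and the tail $\sigma_m\sigma_{m-1}\cdots\sigma_{j+2}$ of $t_m$ commutes with $t_j$, it suffices to handle $\sigma_{j+1}\sigma_j\cdots\sigma_1\cdot t_j=\sigma_{j+1}\sigma_j\cdots\sigma_1\sigma_j\cdots\sigma_1$. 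The standard "Artin combing" identity $\sigma_j\cdots\sigma_1\sigma_j\cdots\sigma_1=\sigma_{j-1}\cdots\sigma_1\sigma_j\cdots\sigma_1\sigma_1$ (repeated application of $\sigma_i\sigma_{i+1}\sigma_i=\sigma_{i+1}\sigma_i\sigma_{i+1}$ and far commutativity) is exactly what yields $t_jt_j=t_{j-1}t_j\sigma_1$, and reinserting the commuting tail upgrades this to $t_mt_j=t_{j-1}t_m\sigma_1$.

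The step I expect to be the main obstacle is the combinatorial bookkeeping in that Artin combing identity: one has to track carefully which indices the braid relation is being applied at, and confirm that no relation outside the generating set $\{\sigma_1,\dots,\sigma_k\}$ is needed (in particular that all intermediate words stay within $\BG k$, which is automatic since $m<k$). A clean way to organize it is induction on $j$: for $j=1$, $t_1t_1=\sigma_1\sigma_1=t_0t_1\sigma_1$ trivially; for the inductive step, write $t_{j+1}=\sigma_{j+1}t_j$, compute $\sigma_{j+1}t_j\cdot\sigma_{j+1}t_j$, commute $\sigma_{j+1}$ past the part of $t_j$ not involving $\sigma_j$, apply one braid relation $\sigma_{j+1}\sigma_j\sigma_{j+1}=\sigma_j\sigma_{j+1}\sigma_j$, and reduce to the case $j$. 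Once $t_{j}t_{j}=t_{j-1}t_{j}\sigma_1$ is in hand, multiplying on the left by the commuting block $\sigma_m\cdots\sigma_{j+2}$ gives the needed $t_mt_j=t_{j-1}t_m\sigma_1$ for all $j\le m$, completing the argument. Since everything is an identity of elements of the monoid $\BG k$, it holds a fortiori in $\ndZ[\BG k]$, which is where $S_{1,m}$ lives.
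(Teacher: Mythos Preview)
Your proposal is correct and follows essentially the same route as the paper: both arguments reduce the identity to the single relation $t_m t_j = t_{j-1}t_m\sigma_1$ for $1\le j\le m$ (in your notation) and then sum. The paper obtains this relation in one stroke from the commutation $(\sigma_m\cdots\sigma_1)\sigma_i=\sigma_{i-1}(\sigma_m\cdots\sigma_1)$ for $2\le i\le m$, whereas you first reduce to the square case $t_jt_j=t_{j-1}t_j\sigma_1$ and then reinsert the commuting prefix $\sigma_m\cdots\sigma_{j+1}$; this is only an organizational difference (and note your ``tail'' should end at $\sigma_{j+1}$, not $\sigma_{j+2}$, for the reinsertion step to land on $t_m$).
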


\begin{proof}
It is easy to check that
$$(\sigma_m\sigma_{m-1}\cdots\sigma_2\sigma_1)\sigma_{i}=
\sigma_{i-1}(\sigma_m\sigma_{m-1}\cdots\sigma_2\sigma_1)
$$
for all $2\le i\le m<k$. Thus
\begin{align*}
&\sigma_m\cdots \sigma_2\sigma_1S_{1,m}\\
&=\sum_{t=1}^m \sigma_m\cdots \sigma_2\sigma_1 \sigma_{t}\sigma_{t-1}\cdots \sigma_2\sigma_1
+\sigma_m\cdots \sigma_2\sigma_1\\
&=\sum_{t=1}^{m-1}\sigma_{t}\sigma_{t-1}\cdots \sigma_2\sigma_1
(\sigma_m\cdots \sigma_2\sigma_1)\sigma_1+\sigma_m\cdots \sigma_2\sigma_1^2+
\sigma_m\cdots \sigma_2\sigma_1\\
&=S_{1,m-1}(\sigma_m\cdots \sigma_2\sigma_1^2)+\sigma_m\cdots \sigma_2\sigma_1.
\end{align*}
Hence,
\begin{align*}
&(1-\sigma_m\cdots \sigma_2\sigma_1)S_{1,m}\\
&=S_{1,m}-S_{1,m-1}(\sigma_m\cdots \sigma_2\sigma_1^2)
-\sigma_m\cdots \sigma_2\sigma_1\\
&=S_{1,m-1}-S_{1,m-1}(\sigma_m\cdots \sigma_2\sigma_1^2)\\
&=S_{1,m-1}(1-\sigma_m\cdots \sigma_2\sigma_1^2).
\end{align*}
This proves the lemma.
\end{proof}

For any $m\ge 2$ let $R\BG m$ denote the monoid ring of $\BG m$ over $R$ and let
$\overline{\rho}_m:R\BG m\rightarrow\End_R(\overline{V}^{\otimes m})$ be the ring
homomorphism such that $\overline{\rho}_m(\sigma_i)$ is the prebraiding
$\overline{c}$ applied to the $i$-th and $i+1$-th tensor factors of
$\overline{V}^{\otimes m}$.

Let $f:\ndZ[p_{ij}\mid i,j\in I]\rightarrow R$ be the ring homomorphism such that
 $$f(p_{ij})=\overline{q}_{ij} \quad \text{for all $i,j\in I$.} $$

\begin{lemm}\label{det1}
For any $\dlm\in \ndN_0^n$ with $\dlm\ne0$ and $m=|\dlm|$ we have
\begin{align*}
\det(\overline{\rho}_m(1-\sigma_{m-1}\cdots\sigma_2\sigma_1)\big|\overline{V}_{\dlm})
  =\prod_{k\mid \gcd(\dlm)}(1-f(Q_{\dlm})^{N(\dlm)/k})^{\ell_{\dlm/k}}.
\end{align*}
Moreover, if $R$ is a field, $d=\mathrm{ord}(f(Q_{\dlm}))$, and $d\mid
N(\dlm)$, then
\begin{align*}
	\dim (\ker
	(\overline{\rho}_m(1-\sigma_{m-1}\cdots\sigma_2\sigma_1)\big|\olV_{\dlm}))
	=\sum_{k\mid \gcd(\dlm),k\mid N(\dlm)/d}\ell _{\dlm/k}.
\end{align*}
\end{lemm}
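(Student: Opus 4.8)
The plan is to compute the determinant by decomposing the operator $\overline{\rho}_m(1-\sigma_{m-1}\cdots\sigma_1)\big|\overline{V}_{\dlm}$ into a direct sum of block-cyclic pieces indexed by necklaces, using the combinatorial structure of the word basis of $\overline{V}_{\dlm}$. First I would recall that $\overline{V}_{\dlm}$ has the basis $\{x_w \mid w\in\mathbb{X}_{\dlm}\}$, and observe that the element $\tau:=\sigma_{m-1}\cdots\sigma_2\sigma_1$ acts (up to a scalar coming from the braiding) as a cyclic rotation $x_{i_1}x_{i_2}\cdots x_{i_m}\mapsto \overline{q}_{i_1,i_2}\overline{q}_{i_1,i_3}\cdots\overline{q}_{i_1,i_m}\, x_{i_2}\cdots x_{i_m}x_{i_1}$, since $\sigma_1$ moves $x_{i_1}$ past $x_{i_2}$, then $\sigma_2$ past $x_{i_3}$, and so on. Thus $\overline{\rho}_m(\tau)$ permutes the basis $\mathbb{X}_{\dlm}$ (as words) cyclically within each equivalence class under rotation, and these classes are exactly the \emph{necklaces}: a class containing the Lyndon word $v$ with $v^k=w$ of degree $\dlm$ has size $|w|/k\cdot$ — more precisely, the rotation orbit of a word $w=v^k$ (with $v$ primitive/Lyndon) has size $|v|=|w|/k=m/k$. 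So the operator is block-diagonal with one block per rotation orbit, and each block is a ``scalar cyclic shift'': a matrix of the form $1-C$ where $C$ is a cyclic permutation of length $\ell$ scaled so that $C^\ell$ equals multiplication by the total braiding scalar $\prod_{r<s}\overline{q}_{i_r i_s}$ around the full word.

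Next I would identify that total scalar. For a word $w$ of degree $\dlm$, the product of all braiding coefficients $\overline{q}_{i_r i_s}$ over $r<s$ — which is the scalar accumulated when $C$ is raised to the orbit length — depends only on $\dlm$ (not on the particular word), because reordering letters only permutes the pairs; this product is $f\bigl(\prod_i p_{ii}^{\binom{m_i}{2}}\prod_{i<j}(p_{ij}p_{ji})^{m_im_j}\bigr)$, which by Remark~\ref{Qfactor} and Definition~\ref{def:Pm} equals $f(Q_{\dlm})^{N(\dlm)}$. Hence, on the orbit of $w=v^k$ (with $v$ Lyndon), the block of $\overline{\rho}_m(\tau)$ is conjugate to $\lambda\cdot(\text{cyclic shift of length }m/k)$ where $\lambda^{m/k}$ equals the ``per-fundamental-period'' scalar, namely $f(Q_{\dlm})^{N(\dlm)k/m}$ — so the block $1-\lambda\cdot(\text{shift})$ has determinant $1-\lambda^{m/k}\cdot(\pm1)^{\dots}$; being careful with signs, $\det(1 - \mu S) = 1-\mu^{\,\text{length}}$ for the standard cyclic shift $S$ of that length with scalar $\mu$. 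Collecting: each Lyndon word $v$ of degree $\dlm/k$ (for $k\mid\gcd(\dlm)$) contributes a factor $1-f(Q_{\dlm})^{N(\dlm)/k}$, and the number of such Lyndon words is $\ell_{\dlm/k}$. Multiplying over all orbits gives exactly $\prod_{k\mid\gcd(\dlm)}(1-f(Q_{\dlm})^{N(\dlm)/k})^{\ell_{\dlm/k}}$, as claimed.

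For the kernel dimension when $R$ is a field with $d=\ord(f(Q_{\dlm}))$ and $d\mid N(\dlm)$: the block attached to a Lyndon word $v$ of degree $\dlm/k$ is $1-\mu S$ with $S$ a cyclic shift of length $m/k$ and $\mu^{m/k}=f(Q_{\dlm})^{N(\dlm)/k}$; such a block has nontrivial kernel (in fact $1$-dimensional kernel) precisely when $\mu^{m/k}=1$, i.e.\ when $f(Q_{\dlm})^{N(\dlm)/k}=1$, i.e.\ when $d\mid N(\dlm)/k$, and the kernel is $0$ otherwise. Summing the contributions gives $\sum_{k\mid\gcd(\dlm),\,d\mid N(\dlm)/k}\ell_{\dlm/k}$, matching the second formula (note $d\mid N(\dlm)$ is used so that the condition $k\mid N(\dlm)/d$ is equivalent to $d\mid N(\dlm)/k$, and so that at least the $k$'s with $k\mid N(\dlm)/d$ genuinely occur).

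**Main obstacle.** The routine-looking but genuinely delicate point is the precise bookkeeping of the scalars and \emph{signs}: identifying that $\overline{\rho}_m(\tau)$ really is a signed/scaled cyclic permutation of the word basis with the \emph{correct} total scalar $f(Q_{\dlm})^{N(\dlm)}$, and verifying $\det(1-\mu S)=1-\mu^{\ell}$ with no stray sign for the cyclic shift $S$ of length $\ell$. I would handle this by writing $\tau$ explicitly as a composite $\sigma_{m-1}\cdots\sigma_1$, tracking how the first tensor factor migrates to the last position and collecting the coefficient $\overline{q}_{i_1 i_2}\cdots\overline{q}_{i_1 i_m}$, then arguing that the product of these coefficients around a full rotation orbit telescopes into $\prod_{r<s}\overline{q}_{i_r i_s}$ (every unordered pair of positions contributes exactly once), hence equals $f(Q_{\dlm})^{N(\dlm)}$ by the definition of $Q_{\dlm}$ and $N(\dlm)$ together with Remark~\ref{Qfactor}; the characteristic-polynomial identity $\det(xI-\mu S)=x^\ell-\mu^\ell$ for the standard $\ell$-cycle then finishes it at $x=1$.
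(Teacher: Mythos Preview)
Your approach is exactly the paper's: decompose $\overline{V}_{\dlm}$ into rotation orbits (necklaces), observe that $\overline{\rho}_m(\sigma_{m-1}\cdots\sigma_1)$ acts as a scaled cyclic shift on each orbit, and compute the block determinant. The kernel computation is also handled the same way.

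However, your scalar bookkeeping --- the very point you flagged as the main obstacle --- contains real errors, not just imprecision. First, on an orbit of $w=v^k$ (with $v$ Lyndon of length $\ell=m/k$) the block of $\overline{\rho}_m(\tau)$ is not $\lambda\cdot S$ for a \emph{single} scalar $\lambda$: the $t$-th rotation step contributes the step-dependent scalar $\lambda_t=\prod_{s\ne t,\,1\le s\le m}\overline{q}_{i_t i_s}$, so the block is a weighted cyclic shift with $\det(1-\text{block})=1-\prod_{t=1}^\ell\lambda_t$. Second, the quantity $\prod_{r<s}\overline{q}_{i_r i_s}$ is \emph{not} degree-invariant (for $w=b_1b_2$ it is $\overline{q}_{12}$, for $w=b_2b_1$ it is $\overline{q}_{21}$); the invariant is $\prod_{r\ne s}\overline{q}_{i_r i_s}=\prod_i\overline{q}_{ii}^{m_i(m_i-1)}\prod_{i<j}(\overline{q}_{ij}\overline{q}_{ji})^{m_im_j}=f(Q_{\dlm})^{N(\dlm)}$, which is the scalar after $m$ rotations, not after $\ell$. (Your displayed expression with $\binom{m_i}{2}$ in the $p_{ii}$-exponent is neither of these.) Third, you cannot recover the per-period scalar by taking a $k$-th root of $f(Q_{\dlm})^{N(\dlm)}$ in a general commutative ring; one must compute $\prod_{t=1}^\ell\lambda_t$ directly, and your intermediate value $f(Q_{\dlm})^{N(\dlm)k/m}$ is wrong (and generally not even an integer power). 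The paper carries out this product explicitly, regrouping by letters of $v$ to obtain $\prod_{t=1}^\ell\lambda_t=\prod_a\overline{q}_{aa}^{m_a(m_a-1)/k}\prod_{a<b}(\overline{q}_{ab}\overline{q}_{ba})^{m_am_b/k}=f(Q_{\dlm})^{N(\dlm)/k}$, which is the formula you then correctly state without having derived it.
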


\begin{proof}
Consider the action of $\ndZ$ on $\mathbb{X}_{\underline{m}}$ given by
   $$1\cdot i_1 \cdots i_m=i_2\cdots i_mi_1.$$
In any $\ndZ$-orbit of $\mathbb{X}_{\underline{m}}$ there is a unique element $v^k$,
where $v$ is a Lyndon word and $k \in \ndN$ with $k|m_i$ for each $1\le i\le n$.

To any $\ndZ$-orbit $\mathcal{O}$ of $\mathbb{X}_{\dlm}$
we attach the submodule $\overline{V}_{\mathcal{O}}$ of $\overline{V}_{\underline{m}}$ generated by
$x_{i_1}x_{i_2}\cdots x_{i_m}$ with $i_1i_2 \cdots i_m \in \mathcal{O}$. Then
\begin{align}\label{decom1}
 \overline{V}_{\underline{m}}=\bigoplus_{\mathcal{O}}\overline{V}_{\mathcal{O}}.
\end{align}
Let $\mathcal{O}$ be a $\ndZ$-orbit and let $k\ge 1$ and $v=i_1i_2\cdots i_l$ be the
Lyndon word such that $v^k\in \mathcal{O}$. Then $l=m/k$. Let
$\widetilde{v}=x_{i_1}x_{i_2}\cdots x_{i_l}\in T(\overline{V})$.
Then
\begin{align} \label{eq:O1basis}
x_{i_t}x_{i_{t+1}}\cdots x_{i_l}\widetilde{v}^{k-1}x_{i_1}\cdots x_{i_{t-1}}, \quad
1\le t\le l,
\end{align}
is a basis of $\overline{V}_{\mathcal{O}}$, where $x_{i_1}\cdots x_{i_0}=1$. Moreover,
for all $1\le t\le l$,
\begin{align*}
&\overline{\rho}_m(1-\sigma_{m-1}\cdots\sigma_2\sigma_1)
(x_{i_t}\cdots x_{i_{l}}\widetilde{v}^{k-1}x_{i_1}\cdots x_{i_{t-1}})\\
&=x_{i_t}\cdots x_{i_l}\widetilde{v}^{k-1}x_{i_1}\cdots x_{i_{t-1}}
    -\lambda_tx_{i_{t+1}}\cdots x_{i_{l}}\widetilde{v}^{k-1}x_{i_1}
   \cdots x_{i_{t}},
\end{align*}
where $\lambda_t=\overline{q}_{i_ti_1}^{k}\overline{q}_{i_ti_2}^{k}\cdots
   \overline{q}_{i_ti_{t-1}}^{k}\overline{q}_{i_ti_{t}}^{k-1}
   \overline{q}_{i_ti_{t+1}}^{k}\cdots \overline{q}_{i_ti_{l}}^{k}$
   and $x_{i_{t+1}}\cdots x_{i_t}=1$.

We obtain that the matrix of $\overline{\rho}_m(1-\sigma_{m-1}\cdots\sigma_2\sigma_1)
\big|\overline{V}_{\mathcal{O}}$ with respect to the basis \eqref{eq:O1basis}
is $A=(a_{st})_{1\le s,t\le l}$, where
$$a_{st}=
\begin{cases}
     1& \text{if $s=t$,}  \\
     -\lambda_t & \text{if $s=t+1$, $1\le t\le l-1$,} \\
     -\lambda_l & \text{if $s=1$, $t=l$,} \\
     0& \text{otherwise.}
\end{cases}
$$
Therefore,
\begin{align*}
&\det(\overline{\rho}_m(1-\sigma_{m-1}\cdots\sigma_2\sigma_1)\big|\overline{V}_{\mathcal{O}})\\
&=1+(-1)^{l+1}(-1)^l\lambda_1\lambda_2\cdots\lambda_l\\
&=1-\prod_{1\le t \le l}\overline{q}_{i_ti_t}^{k-1}\prod_{1\le t<s\le l}(\overline{q}_{i_ti_s}
\overline{q}_{i_si_t})^k\\
&=1-\prod_{1\le t \le n}\overline{q}_{tt}^{m_t(m_t-1)/k}\prod_{1\le t<s\le n}
(\overline{q}_{st}\overline{q}_{ts})^{m_tm_s/k}\\
&=1-f(Q_{\dlm})^{N(\dlm)/k}.
\end{align*}
Hence
\begin{align*}
\det(\overline{\rho}_m(1-\sigma_{m-1}\cdots\sigma_2\sigma_1)\big|\olV_{\dlm})
  =\prod_{k\mid \gcd(\dlm)}(1-f(Q_{\dlm})^{N(\dlm)/k})^{\ell_{\dlm/k}}
\end{align*}
because of the decomposition of $\olV_{\dlm}$ in \eqref{decom1}.

If $R$ is a field, then the matrix $A$ above has corank $0$ or $1$.
Moreover, $A$ has corank $1$ if and only if
$f(Q_{\dlm})^{N(\dlm)/k}=1$, that is, if and only if $d\mid N(\dlm)/k$,
where $d=\mathrm{ord}(f(Q_{\dlm}))$. This implies the last claim.
\end{proof}

\begin{lemm}\label{det2}
For any $\dlm\in \ndN_0^n$ with $m=|\dlm|\ge 2$ we have
\begin{align*}
&\det(\overline{\rho}_m(1-\sigma_{m-1}\cdots\sigma_2\sigma_1^2)\big|\olV_{\dlm})\\
&\quad =\prod_{ i:m_i>0}\prod_{k\mid \gcd(\dlm-\dle_i)}
(1-f(Q_{\dlm})^{N(\dlm)/k})^{\ell_{(\dlm-\dle_i)/k}}.
\end{align*}
Moreover, if $R$ is a field, $d=\mathrm{ord}(f(Q_{\dlm}))$, and $d\mid
N(\dlm)$, then
\begin{align*}
	\dim (\ker
	(\overline{\rho}_m(1-\sigma_{m-1}\cdots\sigma_2\sigma_1^2)\big|\olV_{\dlm}))
	=\sum_{i:m_i>0}\sum_{k\mid \gcd(\dlm-\dle _i),k\mid N(\dlm)/d}\ell
	_{(\dlm-\dle_i)/k}.
\end{align*}
\end{lemm}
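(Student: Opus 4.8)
The plan is to mimic the proof of Lemma~\ref{det1} as closely as possible, since the operator $1-\sigma_{m-1}\cdots\sigma_2\sigma_1^2$ differs from $1-\sigma_{m-1}\cdots\sigma_2\sigma_1$ only in the presence of the extra $\sigma_1$. First I would decompose $\overline{V}_{\dlm}$ into suitable invariant submodules. The natural action to use here is no longer the cyclic $\ndZ$-action permuting all $m$ letters, but rather the $\ndZ$-action that fixes the first letter and cyclically permutes the remaining $m-1$ letters, i.e. $1\cdot i_1i_2\cdots i_m = i_1i_3i_4\cdots i_m i_2$. Equivalently, one groups the basis words $i_1i_2\cdots i_m$ according to the choice of first letter $i_1=i$ (forcing $m_i>0$) together with the cyclic orbit of the tail $i_2\cdots i_m$, which is a word of degree $\dlm-\dle_i$. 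Thus $\overline{V}_{\dlm}=\bigoplus_{i:m_i>0}\bigoplus_{\mathcal{O}}\overline{V}_{i,\mathcal{O}}$, where $\mathcal{O}$ ranges over $\ndZ$-orbits in $\mathbb{X}_{\dlm-\dle_i}$; in each such tail-orbit there is a unique representative $v^k$ with $v$ Lyndon of degree $(\dlm-\dle_i)/k$ and $k\mid\gcd(\dlm-\dle_i)$.

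Next I would compute the action of $\overline{\rho}_m(1-\sigma_{m-1}\cdots\sigma_2\sigma_1^2)$ on each block $\overline{V}_{i,\mathcal{O}}$. The key observation is that $\sigma_{m-1}\cdots\sigma_2\sigma_1$ moves the first tensor factor all the way to the end, so $\sigma_{m-1}\cdots\sigma_2\sigma_1^2$ first swaps the first two factors (producing a scalar $\overline{q}_{i_1i_2}$) and then cycles the new first factor (originally $x_{i_2}$) to the end. Writing a basis of $\overline{V}_{i,\mathcal{O}}$ analogous to \eqref{eq:O1basis}, with $x_i$ as the fixed leading factor and the cyclic rotations of $\widetilde v^{\,k}$ following it, the matrix of the operator has exactly the same companion-matrix shape as the matrix $A$ in Lemma~\ref{det1}: ones on the diagonal, entries $-\mu_t$ on the subdiagonal, and $-\mu_l$ in the top-right corner, where now the scalars $\mu_t$ record the braiding of $x_{i_t}$ past $x_i$ and past all the other tail letters. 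Its determinant is therefore $1-\prod_t\mu_t$, and a bookkeeping of exponents shows $\prod_t\mu_t = f(Q_{\dlm-\dle_i})^{?}$; here one uses that $N(\dlm-\dle_i)$ need not equal $N(\dlm)$, but since $k\mid\gcd(\dlm-\dle_i)$ implies $k\mid N(\dlm)$ (as already noted after Definition~\ref{defAm}), the exponent reorganizes to $N(\dlm)/k$ and the factor becomes $1-f(Q_{\dlm})^{N(\dlm)/k}$. Multiplying over all tail-orbits $\mathcal{O}$ (of which there are $\ell_{(\dlm-\dle_i)/k}$ with representative of the form $v^k$) and over all admissible leading letters $i$ yields the claimed product formula.

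For the field case, the same corank argument applies: the companion-type matrix $A$ over a field has corank $0$ or $1$, with corank $1$ precisely when $\prod_t\mu_t=1$, i.e. when $f(Q_{\dlm})^{N(\dlm)/k}=1$, which given $d=\ord(f(Q_{\dlm}))$ and $d\mid N(\dlm)$ is equivalent to $d\mid N(\dlm)/k$. Summing the corank contributions over all blocks gives $\dim\ker = \sum_{i:m_i>0}\sum_{k\mid\gcd(\dlm-\dle_i),\,k\mid N(\dlm)/d}\ell_{(\dlm-\dle_i)/k}$, as claimed.

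The main obstacle I anticipate is the exponent bookkeeping in the scalars $\mu_t$ and, in particular, verifying carefully that $\prod_{1\le t\le l}\mu_t$ collapses to a power of $f(Q_{\dlm})$ rather than merely a power of $f(Q_{\dlm-\dle_i})$ — one must track the extra contributions coming from braiding each tail letter past the fixed leading letter $x_i$, check that these combine with the "internal" braiding of $\widetilde v^{\,k}$ to give exactly the exponents $m_t(m_t-1)/k$ and $m_tm_s/k$ appearing in $Q_{\dlm}$ (now including the index $i$), and confirm the divisibility $k\mid N(\dlm)$ makes $f(Q_{\dlm})^{N(\dlm)/k}$ well-defined over $\ndZ[p_{ij}]$. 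A secondary subtlety is checking that the proposed list \eqref{eq:O1basis}-analogue really is a basis of $\overline{V}_{i,\mathcal{O}}$ and that the leading $\sigma_1$ does not disturb the cyclic structure — this is where the relation $(\sigma_{m-1}\cdots\sigma_2\sigma_1)\sigma_i=\sigma_{i-1}(\sigma_{m-1}\cdots\sigma_2\sigma_1)$ from the proof of Lemma~\ref{basic Lemm} is used again.
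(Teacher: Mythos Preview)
Your proposal is correct and follows essentially the same route as the paper: decompose $\overline{V}_{\dlm}$ via the $\ndZ$-action fixing the first letter and cyclically permuting the tail (so orbit representatives are $jv^k$ with $v$ Lyndon and $k\mid\gcd(\dlm-\dle_j)$), obtain a companion-type matrix on each orbit block, and read off the determinant and corank. The exponent bookkeeping you flag as the main obstacle is carried out explicitly in the paper and does collapse to $1-f(Q_{\dlm})^{N(\dlm)/k}$; the braid relation from Lemma~\ref{basic Lemm} you mention at the end is not actually needed, since the action on the explicit basis is computed directly.
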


\begin{proof}
 Let us consider the $\ndZ$-action on $\mathbb{X}_{\dlm}$ given by
   $$1\cdot i_1i_2\cdots i_m=i_1i_3i_4\cdots i_mi_2.$$
Then $(m-1)\cdot i_1i_2\cdots i_m=i_1i_2\cdots i_m$.
In any $\ndZ$-orbit of $\bX_{\dlm}$
there is a unique element $jv^k$, where $j\in I$, $v$ is a Lyndon word,
and $k\ge 1$. Moreover, then $k\mid m_j-1$ and $k\mid m_t$ for each $1\le t\le n$
with $t\ne j$.

Again, to any $\ndZ$-orbit $\cO $ we attach the submodule $\overline{V}_{\cO}$
of $\overline{V}_{\dlm}$
generated by the monomials $x_{i_1}\cdots x_{i_{m}}$, where $i_1\cdots
i_{m}\in \mathcal{O}$.
Then
\begin{align}
\overline{V}_{\dlm} =\bigoplus_{\cO} \overline{V}_{\cO}.
\end{align}
Let $v=i_1i_2\cdots i_l$ be a Lyndon word, $j\in \{1,\dots,n\}$, and $k\ge 1$.
Assume that $\deg jv^k=\dlm$. Then $l=(m-1)/k$.
Let $\widetilde{v}=x_{i_1}\cdots x_{i_l}\in T(\overline{V})$.
Then the monomials
\begin{align} \label{eq:O2basis}
  x_jx_{i_t}x_{i_{t+1}}\cdots x_{i_l}\widetilde{v}^{k-1}x_{i_1}\cdots x_{i_{t-1}},
\quad 1\le t\le l,
\end{align}
form a basis of $\overline{V}_{\cO}$ for the $\ndZ$-orbit $\cO$ of $jv^k$,
where $x_{i_1}\cdots x_{i_0}=1$.

For any $1\le t\le l$ one obtains that
\begin{align*}
&\overline{\rho}_m(1-\sigma_{m-1}\cdots\sigma_2\sigma_1^2)
(x_jx_{i_t}\cdots x_{i_{l}}\widetilde{v}^{k-1}x_{i_1}\cdots x_{i_{t-1}})\\
&=x_jx_{i_t}\cdots x_{i_{l}}\widetilde{v}^{k-1}x_{i_1}\cdots x_{i_{t-1}}
    -\overline{q}_{ji_t}\overline{q}_{i_tj}\lambda_tx_jx_{i_{t+1}}\cdots
    x_{i_{l}}\widetilde{v}^{k-1}x_{i_1}\cdots x_{i_{t}},
\end{align*}
where $x_{i_{l+1}}\cdots x_{i_l}=1$ and
$\lambda_t=\overline{q}_{i_ti_1}^{k}\overline{q}_{i_ti_2}^{k}\cdots
  \overline{q}_{i_ti_{t-1}}^{k}\overline{q}_{i_ti_{t}}^{k-1}
  \overline{q}_{i_ti_{t+1}}^{k}\cdots \overline{q}_{i_ti_{l}}^{k}$
   for all $1\le t\le l$.
Thus the matrix of $\overline{\rho}_m(1-\sigma_{m-1}\cdots\sigma_2\sigma_1^2)|\overline{V}_{\cO}$
with respect to the basis \eqref{eq:O2basis} is $B=(b_{st})_{1\le s,t\le l}$, where
$$b_{st}=
\begin{cases}
     1 & \text{if $s=t$,}  \\
     -\overline{q}_{ji_t}\overline{q}_{i_tj}\lambda_t & \text{if $s=t+1$, $1\le t\le l-1$,} \\
     -\overline{q}_{ji_l}\overline{q}_{i_lj}\lambda_l & \text{if $s=1$, $t=l$,} \\
     0 & \text{otherwise.}
\end{cases}
$$
Hence,
\begin{align*}
&\det(\overline{\rho}_m(1-\sigma_{m-1}\cdots\sigma_2\sigma_1^2)\big|\overline{V}_{\cO})\\
&=1+(-1)^{l+1}(-1)^l\overline{q}_{ji_1}\overline{q}_{i_1j}\cdots
\overline{q}_{ji_l}\overline{q}_{i_lj}\lambda_1\lambda_2\cdots\lambda_l\\
&=1-\prod_{1\le t \le l}(\overline{q}_{ji_t}\overline{q}_{i_tj})
\prod_{1\le t\le l}\overline{q}_{i_ti_t}^{k-1}
\prod_{1\le t<s\le l}(\overline{q}_{i_ti_s}\overline{q}_{i_si_t})^k\\
&=1-\prod_{1\le t \le n}\overline{q}_{tt}^{m_t(m_t-1)/k}\prod_{1\le t<s\le n}(\overline{q}_{st}\overline{q}_{ts})^{m_tm_s/k}\\
&=1-f(Q_{\dlm})^{N(\dlm)/k}.
\end{align*}
This implies the first claim.

If $R$ is a field, then the matrix $B$ above has corank $0$ or $1$.
Moreover, $B$ has corank $1$ if and only if
$f(Q_{\dlm})^{N(\dlm)/k}=1$, that is, if and only if $d\mid N(\dlm)/k$,
where $d=\mathrm{ord}(f(Q_{\dlm}))$. This implies the last claim.
\end{proof}

\begin{lemm}\label{le:detshuffle}
Let $\dlm\in \ndN_0^n$ with $m=|\dlm|\ge 2$. Then
\begin{align*}
	&\det (\overline{\rho}_m(S_{1,m-1})|\overline{V}_{\dlm})
	\prod_{k\mid \gcd(\dlm)}(1-f(Q_{\dlm})^{N(\dlm)/k})^{\ell_{\dlm/k}}\\
	&=\prod_{i:m_i>0}\Big(\det
	(\overline{\rho}_{m-1}(S_{1,m-2})|\overline{V}_{\dlm-\dle_i})\\
  &\qquad \quad
   \prod_{k\mid \gcd(\dlm-\dle_i)}(1-f(Q_{\dlm})^{N(\dlm)/k})^{\ell_{(\dlm-\dle_i)/k}}
   \Big).
\end{align*}
\end{lemm}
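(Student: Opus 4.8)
The plan is to deduce this determinant identity from Lemma~\ref{basic Lemm}, which gives the operator identity
$(1-\sigma_{m-1}\cdots\sigma_2\sigma_1)S_{1,m-1}=S_{1,m-2}(1-\sigma_{m-1}\cdots\sigma_2\sigma_1^2)$
in $\BG m$ (here taking the ``$m$'' of that lemma to be $m-1$ and its ``$k$'' to be $m$). Applying the ring homomorphism $\overline{\rho}_m$ and restricting to the homogeneous component $\olV_{\dlm}$, we obtain an equality of endomorphisms of $\olV_{\dlm}$, hence an equality of their determinants. This yields
\begin{align*}
\det(\overline{\rho}_m(1-\sigma_{m-1}\cdots\sigma_1)|\olV_{\dlm})\,
\det(\overline{\rho}_m(S_{1,m-1})|\olV_{\dlm})
=\det(\overline{\rho}_m(S_{1,m-2})|\olV_{\dlm})\,
\det(\overline{\rho}_m(1-\sigma_{m-1}\cdots\sigma_1^2)|\olV_{\dlm}).
\end{align*}

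The first factor on the left is evaluated by Lemma~\ref{det1}, giving exactly $\prod_{k\mid\gcd(\dlm)}(1-f(Q_{\dlm})^{N(\dlm)/k})^{\ell_{\dlm/k}}$, which matches the left-hand side of the asserted identity. The last factor on the right is evaluated by Lemma~\ref{det2}, giving $\prod_{i:m_i>0}\prod_{k\mid\gcd(\dlm-\dle_i)}(1-f(Q_{\dlm})^{N(\dlm)/k})^{\ell_{(\dlm-\dle_i)/k}}$. So the two sides already agree except that the right-hand side of the claim has a product over $i$ of terms $\det(\overline{\rho}_{m-1}(S_{1,m-2})|\overline{V}_{\dlm-\dle_i})$, whereas what we have produced is the single factor $\det(\overline{\rho}_m(S_{1,m-2})|\olV_{\dlm})$. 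Thus the remaining step is to establish
\begin{align*}
\det(\overline{\rho}_m(S_{1,m-2})|\olV_{\dlm})
=\prod_{i:m_i>0}\det(\overline{\rho}_{m-1}(S_{1,m-2})|\overline{V}_{\dlm-\dle_i}).
\end{align*}
This is a block decomposition: the element $S_{1,m-2}\in\BG m$ involves only the generators $\sigma_1,\dots,\sigma_{m-2}$, hence $\overline{\rho}_m(S_{1,m-2})$ acts only on the first $m-1$ tensor factors and leaves the last tensor factor untouched up to the scalar coming from commuting $x_{i_m}$ past — but since $S_{1,m-2}$ does not move the $m$-th slot at all, we get a clean splitting $\olV_{\dlm}=\bigoplus_{i:m_i>0}\olV_{\dlm-\dle_i}\otimes_R x_i$ as a direct sum of $\overline{\rho}_m(S_{1,m-2})$-invariant submodules, on each of which the operator is identified with $\overline{\rho}_{m-1}(S_{1,m-2})$ on $\overline{V}_{\dlm-\dle_i}$ tensored with the identity on the one-dimensional factor $Rx_i$. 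Taking determinants over this direct sum gives the product formula. One should also note the degenerate case $m=2$, where $S_{1,0}=1$, both sides are $1$, and the identity reduces to Lemma~\ref{det1} applied in degree $2$ versus degree $1$ (where $\gcd(\dlm-\dle_i)$-products are empty and $\det(\overline{\rho}_1(S_{1,-1}))$ is read as an empty product).

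I expect the only genuinely delicate point to be the bookkeeping in the block-decomposition step: one must check that, when writing a basis monomial $x_{i_1}\cdots x_{i_{m-1}}x_i$ of $\olV_{\dlm}$, the action of each $\overline{\rho}_m(\sigma_t)$ for $t\le m-2$ genuinely coincides with the action of $\overline{\rho}_{m-1}(\sigma_t)$ on the truncated monomial $x_{i_1}\cdots x_{i_{m-1}}$ — this is immediate from the definition of $\overline{\rho}$ as ``apply $\overline{c}$ to the $t$-th and $(t+1)$-th factors'', since for $t\le m-2$ the last factor is never among the $t$-th and $(t+1)$-th. Everything else is formal manipulation of determinants and direct sums, together with the two already-proven evaluations in Lemmas~\ref{det1} and~\ref{det2}.
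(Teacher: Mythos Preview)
Your proof is correct and follows exactly the same route as the paper: apply $\overline{\rho}_m$ to the identity of Lemma~\ref{basic Lemm}, take determinants on $\olV_{\dlm}$, and substitute the evaluations from Lemmas~\ref{det1} and~\ref{det2}. The paper's proof is terser --- it simply writes ``This implies the lemma'' at the point where you carefully justify the block decomposition $\det(\overline{\rho}_m(S_{1,m-2})|\olV_{\dlm})=\prod_{i:m_i>0}\det(\overline{\rho}_{m-1}(S_{1,m-2})|\overline{V}_{\dlm-\dle_i})$ via the splitting $\olV_{\dlm}=\bigoplus_{i:m_i>0}\olV_{\dlm-\dle_i}\otimes_R x_i$; your extra detail here is welcome. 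One small slip: in your $m=2$ remark you write $\overline{\rho}_1(S_{1,-1})$ but mean $\overline{\rho}_1(S_{1,0})=\id$, and the products over $k\mid\gcd(\dlm-\dle_i)$ are not actually empty in that case (each contributes a single factor $(1-f(Q_{\dlm})^{N(\dlm)})$), but the general argument already covers $m=2$ without needing separate treatment.
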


\begin{proof}
From Lemma \ref{basic Lemm} we conclude that
\begin{align*}
&\det (\overline{\rho}_m(S_{1,m-1})|\overline{V}_{\underline{m}})
  \det(\overline{\rho}_m(1-\sigma_{m-1}\cdots \sigma_{2}\sigma_{1})|\overline{V}_{\dlm})\\
  &=\det (\overline{\rho}_m(S_{1,m-2})|\overline{V}_{\underline{m}})
  \det(\overline{\rho}_m(1-\sigma_{m-1}\cdots \sigma_{2}\sigma_{1}^2)|\overline{V}_{\dlm}).
\end{align*}
Because of Lemma \ref{det1} and \ref{det2} the above equality is equivalent to
\begin{align*}
&\det (\overline{\rho}_m(S_{1,m-1})|\overline{V}_{\dlm})
  \prod_{k\mid \gcd(\dlm)}(1-f(Q_{\dlm})^{N(\dlm)/k})^{\ell_{\dlm/k}}\\
  &=\det (\overline{\rho}_m(S_{1,m-2})|\overline{V}_{\dlm})
 \prod_{i:m_i>0}\prod_{k\mid \gcd(\dlm-\dle_i)}(1-f(Q_{\dlm})^{N(\dlm)/k})^{\ell_{(\dlm-\dle_i)/k}}.
\end{align*}
This implies the lemma.
\end{proof}

\begin{prop}\label{pr:detshuffle2}
 Let $\dlm =(m_1,\dots,m_n)\in \ndN_0^n$ with $m=|\dlm|\ge 2$.
\begin{enumerate}
  \item If $\dlm=m_i\dle_i$ with $1\le i\le n$
  then $\overline{\rho}_m(S_{1,m-1})|\overline{V}_{\dlm}=(m_i)_{\overline{q}_{ii}}\id$.
  \item If there exist $1\le i<j\le n$ with $m_i,m_j\ne0$, then
\begin{align}
\label{detshuf}
\det (\overline{\rho}_m(S_{1,m-1})|\overline{V}_{\underline{m}})
	=f(A_{\dlm})\prod_{i:m_i>0}\det (\overline{\rho}_{m-1}(S_{1,m-2})|\overline{V}_{\dlm-\dle_i}).
\end{align}
\end{enumerate}
\end{prop}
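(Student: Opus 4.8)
The plan is to settle part~(1) by a direct computation on a rank‑one module and to deduce part~(2) from Lemma~\ref{le:detshuffle} after reducing to a universal ring where cancellation is legitimate.

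For part~(1): if $\dlm=m_i\dle_i$ then, since the only word of degree $\dlm$ is the one with $m_i$ copies of the $i$-th letter, $\overline{V}_{\dlm}$ is free of rank one with basis the monomial $x_i^{\ot m}$ ($m=m_i$). Each $\sigma_t$ acts by $\overline{\rho}_m(\sigma_t)(x_i^{\ot m})=\overline{q}_{ii}x_i^{\ot m}$, so $\overline{\rho}_m(\sigma_t\cdots\sigma_1)$ acts by the scalar $\overline{q}_{ii}^{\,t}$, and therefore $\overline{\rho}_m(S_{1,m-1})|\overline{V}_{\dlm}=\bigl(1+\overline{q}_{ii}+\dots+\overline{q}_{ii}^{\,m-1}\bigr)\id=(m_i)_{\overline{q}_{ii}}\id$.

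For part~(2) I would first reduce to the universal situation. Let $U=\ndZ[p_{ij}\mid i,j\in I]$ and let $(\widetilde{V},\widetilde{c})$ be the free prebraided module of diagonal type over $U$ with basis $y_1,\dots,y_n$ and $\widetilde{c}(y_i\ot y_j)=p_{ij}y_j\ot y_i$; write $\widetilde{\rho}_m$ for the associated representation of $U\BG m$. The map $1\ot(y_{i_1}\cdots y_{i_m})\mapsto x_{i_1}\cdots x_{i_m}$ identifies $R\ot_U\widetilde{V}^{\ot m}$ with $\overline{V}^{\ot m}$ as $\ndZ^n$-graded $R\BG m$-modules in such a way that $\id_R\ot_U\widetilde{\rho}_m(b)$ becomes $\overline{\rho}_m(b)$ for every $b\in\ndZ\BG m$; restricting to the degree-$\dlm$ part and computing the determinant in the monomial basis gives $\det(\overline{\rho}_m(b)|\overline{V}_{\dlm})=f\bigl(\det(\widetilde{\rho}_m(b)|\widetilde{V}_{\dlm})\bigr)$. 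Since the hypothesis of part~(2) is precisely the hypothesis of Lemma~\ref{le:Am}, $A_{\dlm}$ is an honest polynomial, so $f(A_{\dlm})$ is defined and it suffices to prove \eqref{detshuf} in the case $R=U$, $\overline{q}_{ij}=p_{ij}$, $f=\id$.

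In that case $U$ is an integral domain and Lemma~\ref{le:detshuffle} gives
\begin{align*}
&\det(\widetilde{\rho}_m(S_{1,m-1})|\widetilde{V}_{\dlm})\prod_{k\mid\gcd(\dlm)}\bigl(1-Q_{\dlm}^{N(\dlm)/k}\bigr)^{\ell_{\dlm/k}}\\
&\qquad=\Bigl(\prod_{i:m_i>0}\det(\widetilde{\rho}_{m-1}(S_{1,m-2})|\widetilde{V}_{\dlm-\dle_i})\Bigr)\prod_{i:m_i>0}\prod_{k\mid\gcd(\dlm-\dle_i)}\bigl(1-Q_{\dlm}^{N(\dlm)/k}\bigr)^{\ell_{(\dlm-\dle_i)/k}}.
\end{align*}
By Remark~\ref{Qfactor} each $Q_{\dlm}^{N(\dlm)/k}$ with $k\mid N(\dlm)$ is a non-constant monomial, so the factor $\prod_{k\mid\gcd(\dlm)}(1-Q_{\dlm}^{N(\dlm)/k})^{\ell_{\dlm/k}}$ is a nonzero element of the domain $U$ and may be cancelled. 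Using the polynomial identity $\prod_{i:m_i>0}\prod_{k\mid\gcd(\dlm-\dle_i)}(1-Q_{\dlm}^{N(\dlm)/k})^{\ell_{(\dlm-\dle_i)/k}}=A_{\dlm}\prod_{k\mid\gcd(\dlm)}(1-Q_{\dlm}^{N(\dlm)/k})^{\ell_{\dlm/k}}$ from Definition~\ref{defAm} (valid as an identity of polynomials because $A_{\dlm}$ is one), the cancellation yields $\det(\widetilde{\rho}_m(S_{1,m-1})|\widetilde{V}_{\dlm})=A_{\dlm}\prod_{i:m_i>0}\det(\widetilde{\rho}_{m-1}(S_{1,m-2})|\widetilde{V}_{\dlm-\dle_i})$, i.e.\ \eqref{detshuf} over $U$; applying $f$ finishes the general case.

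The main point needing attention is the base-change reduction — verifying that the shuffle determinant commutes with the specialization map $f$ — and, relatedly, the fact that one must cancel the denominator over the domain $U$ rather than over $R$, where $1-f(Q_{\dlm})^{N(\dlm)/k}$ can vanish (for example when $f(Q_{\dlm})$ is a root of unity of suitable order). Beyond this, the proof is just Lemma~\ref{le:detshuffle} combined with Definition~\ref{defAm}, and presents no real obstacle.
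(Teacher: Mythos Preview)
Your proposal is correct and follows essentially the same route as the paper: part~(1) by direct computation on the rank-one module $\overline{V}_{m_i\dle_i}$, and part~(2) by reducing to the universal ring $U=\ndZ[p_{ij}]$ with $f=\id$, then invoking Lemma~\ref{le:detshuffle} and cancelling the nonzero denominator (which is legitimate in the domain $U$), with Lemma~\ref{le:Am} ensuring that $A_{\dlm}$ is a polynomial so that $f(A_{\dlm})$ makes sense. The paper's proof is terser but identical in substance; your explicit discussion of the base-change step and the need to cancel over $U$ rather than over $R$ simply spells out what the paper leaves implicit.
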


\begin{proof}
Claim (1) follows directly from the definition of $S_{1,m-1}$.

In order to prove part (2) of the Proposition it suffices to
consider $R=\ndZ [p_{ij}\mid 1\le i,j\le n]$ and $f=\id $.
In this case the claim follows from
Lemma~\ref{le:detshuffle} and Lemma~\ref{le:Am}.
\end{proof}

\section{Nichols algebras which are free algebras}
\label{freeness}

In the remaining part of this paper let $\fie$ be a field,
let $\fie^{\times}=\fie\backslash \{0\}$,
and let $(V,c)$ be an $n$-dimensional braided vector space of diagonal type
with basis $x_1,x_2, \ldots, x_n$
and braiding matrix $\bq \in (\fie^{\times})^{n\times n}$.
Let $T(V)$ and $\NA(V)$ denote the tensor algebra and the Nichols algebra of $V$, respectively.

For the basic theory of Nichols algebras we refer to \cite{AS2002}.

In this section we determine when $\NA (V)$ is a free algebra, that is, $\NA (V)=T(V)$.

For all $k\ge 2$ there is a unique group homomorphism $\tau:\BG k\to \BG{k+1}$
with $\tau(\sigma_i)=\sigma_{i+1}$ for all $1\le i<k$. We also write $\tau $
for the induced algebra maps $\fie \BG k\to \fie \BG{k+1}$.

For all $m\ge 2$ let
$\rho_m:\fie\BG m\longrightarrow \End(V^{\otimes m})$
be the representation of $\fie\BG m$ introduced in Section \ref{shuffles} as $\overline{\rho}_m$,
and let
$$ S_m=S_{1,m-1}\tau(S_{1,m-2})\tau^2(S_{1,m-3})\cdots \tau^{m-2}(S_{1,1})\in \fie\BG m.$$
Then, by \cite{schauenburg1996},
\begin{align} \label{schauenburg}
\NA(V)=\fie \oplus V \oplus \bigoplus_{m=2}^{\infty} V^{\ot m}/\ker(\rho _m(S_m)).
\end{align}

\begin{lemm}\label{det=0}
Let $\dlm\in \ndN_0^n $ with $m=|\dlm|\ge 2$.

(1) If $P_{\dlm}(\bq)=0$, then $\det (\rho_m(S_{1,m-1})|V_{\dlm})=0$.

(2) If $\det (\rho_m(S_{1,m-2})|V_{\dlm})\ne0$ and
$\det (\rho_m(S_{1,m-1})|V_{\dlm})=0$
then $P_{\dlm}(\bq)=0$.
\end{lemm}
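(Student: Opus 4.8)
The plan is to deduce both statements from Proposition~\ref{pr:detshuffle2} and from the relation between $A_{\dlm}$ and $P_{\dlm}$ recorded in Proposition~\ref{pr:Am=0}, treating separately the two cases of Proposition~\ref{pr:detshuffle2}.

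If $\dlm=m_i\dle_i$ for some $1\le i\le n$, then $m=m_i\ge2$, the space $V_{\dlm}$ is one-dimensional (spanned by $x_i^{\otimes m_i}$), and Proposition~\ref{pr:detshuffle2}(1) gives $\rho_m(S_{1,m-1})|V_{\dlm}=(m_i)_{q_{ii}}\,\id$. Hence $\det(\rho_m(S_{1,m-1})|V_{\dlm})=(m_i)_{q_{ii}}=P_{\dlm}(\bq)$ by Definition~\ref{def:Pm}(1), and both (1) and (2) follow at once.

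Assume now that $m_s,m_t\ne0$ for some $1\le s<t\le n$, so that Proposition~\ref{pr:detshuffle2}(2) applies. I would first observe that $S_{1,m-2}$, viewed in $\BG m$, involves only $\sigma_1,\dots,\sigma_{m-2}$ and therefore acts on $V^{\otimes m}=V^{\otimes(m-1)}\otimes V$ as $\rho_{m-1}(S_{1,m-2})\otimes\id_V$; since every monomial of degree $\dlm$ ends in a letter $x_i$ with $m_i>0$, the space $V_{\dlm}$ decomposes as $\bigoplus_{i:m_i>0}V_{\dlm-\dle_i}\otimes\fie x_i$, a decomposition preserved by this operator, so that
\[
\det(\rho_m(S_{1,m-2})|V_{\dlm})=\prod_{i:m_i>0}\det(\rho_{m-1}(S_{1,m-2})|V_{\dlm-\dle_i}).
\]
Substituting this into the identity of Proposition~\ref{pr:detshuffle2}(2) yields
\[
\det(\rho_m(S_{1,m-1})|V_{\dlm})=A_{\dlm}(\bq)\,\det(\rho_m(S_{1,m-2})|V_{\dlm}),
\]
where $A_{\dlm}(\bq)$ denotes the value at $p_{ij}=q_{ij}$ of the polynomial $A_{\dlm}$ (which is a polynomial by Lemma~\ref{le:Am}).

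Finally I would invoke Proposition~\ref{pr:Am=0}: the polynomials $P_{\dlm}$ and $A_{\dlm}$ have the same irreducible factors, so $P_{\dlm}$ divides $A_{\dlm}$ and $A_{\dlm}$ divides a power of $P_{\dlm}$; as $\fie$ is a field, this gives $P_{\dlm}(\bq)=0$ if and only if $A_{\dlm}(\bq)=0$. Statement (1) then follows, since $P_{\dlm}(\bq)=0$ forces $A_{\dlm}(\bq)=0$ and hence, by the last display, $\det(\rho_m(S_{1,m-1})|V_{\dlm})=0$, with no hypothesis on $\det(\rho_m(S_{1,m-2})|V_{\dlm})$ needed. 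Statement (2) follows too: if $\det(\rho_m(S_{1,m-2})|V_{\dlm})\ne0$ while $\det(\rho_m(S_{1,m-1})|V_{\dlm})=0$, the same display forces $A_{\dlm}(\bq)=0$, hence $P_{\dlm}(\bq)=0$. There is no genuine obstacle here; the only points needing care are keeping the index shifts between $\BG m$, $\BG{m-1}$ (and $\rho_m$, $\rho_{m-1}$) straight in the block decomposition of $\det(\rho_m(S_{1,m-2})|V_{\dlm})$, and using that $A_{\dlm}$ and $P_{\dlm}$ need only share their irreducible factors — not coincide — for the vanishing to transfer between them.
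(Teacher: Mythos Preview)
Your proof is correct and follows the same approach as the paper: split into the cases $\dlm=m_i\dle_i$ and $\dlm$ with two nonzero entries, then combine Proposition~\ref{pr:detshuffle2} with the equivalence $P_{\dlm}(\bq)=0\Leftrightarrow A_{\dlm}(\bq)=0$ coming from Proposition~\ref{pr:Am=0}. The paper's proof is terser and leaves the identification $\det(\rho_m(S_{1,m-2})|V_{\dlm})=\prod_{i:m_i>0}\det(\rho_{m-1}(S_{1,m-2})|V_{\dlm-\dle_i})$ implicit, whereas you spell out the block decomposition explicitly.
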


\begin{proof}
 If $\dlm=m_i\dle_i$ for some $1\le i\le n$, $m_i\in\ndN$,
 then the claim holds because of Proposition~\ref{pr:detshuffle2}(1).

 Assume now that there exist $1\le i<j\le n$ with $m_i,m_j\ne0$.
 Then Proposition~\ref{pr:Am=0} implies that
 $A_m(\bq)=0$ if and only if $P_{\dlm}(\bq)=0$.
 Hence the lemma follows from Proposition~\ref{pr:detshuffle2}(2).
\end{proof}

\begin{prop}\label{prop:main}
Let $\dlm\in \ndN_0^n $ with $m=|\dlm|\ge 2$.
\begin{enumerate}
   \item If $P_{\dlm}(\bq)=0$, then there is a non-trivial relation in
   $\NA(V)$ of degree $\dlm$.
   \item If $P_{\dll}(\bq)\ne0$ for all $\dll\le \dlm$ with $|\dll|\ge 2$,
   then there is no non-trivial relation
   in $\NA(V)$ of degree $\dlm$.
\end{enumerate}
\end{prop}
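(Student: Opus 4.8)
The key relation to exploit is \eqref{schauenburg}, which says that $\NA(V)_{\dlm}=V_{\dlm}/\ker(\rho_m(S_m)|V_{\dlm})$, together with the factorization $S_m=S_{1,m-1}\,\tau(S_{m-1})$ that follows from the definition of $S_m$ (here $S_{m-1}=S_{1,m-2}\tau(S_{1,m-3})\cdots$). Accordingly the presence or absence of relations in degree $\dlm$ is governed by $\rho_m(S_m)|V_{\dlm}$, and I plan to control this operator through the determinants computed in Section~\ref{shuffles}, applied with $R=\fie$ and $f$ the evaluation $p_{ij}\mapsto q_{ij}$.

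For part (1): I would argue by induction on $m=|\dlm|$. If $\dlm=m_i\dle_i$, then by Remark~\ref{re:smallell} (specifically \eqref{eq:numbL2}) and Proposition~\ref{pr:detshuffle2}(1), $\rho_m(S_m)|V_{\dlm}$ acts as a scalar which is a product of factors $(k)_{q_{ii}}$ for $k\le m_i$; since $P_{\dlm}(\bq)=(m_i)_{q_{ii}}=0$, this scalar vanishes, giving a non-trivial relation. Otherwise there are $i<j$ with $m_i,m_j\ne 0$. If $P_{\dll}(\bq)=0$ for some $\dll<\dlm$ with $|\dll|\ge 2$, then by induction there is a non-trivial relation in degree $\dll$, and multiplying it by a suitable monomial of degree $\dlm-\dll$ produces one in degree $\dlm$ (this uses that $\NA(V)$ is $\ndZ^n$-graded and generated in degree one, so any lower-degree relation propagates upward). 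If instead $P_{\dll}(\bq)\ne 0$ for all $\dll<\dlm$ with $|\dll|\ge 2$, then in particular $P_{\dlm-\dle_i}(\bq)\ne0$ for all $i$ with $m_i>0$, so by part (2) applied in degree $\dlm-\dle_i$ (or by downward induction, these two parts being proved together) $\det(\rho_{m-1}(S_{m-1})|V_{\dlm-\dle_i})\ne0$; combined with $\tau(S_{m-1})$ restricting to a conjugate of $S_{m-1}$ on the relevant subspaces this forces $\det(\rho_m(\tau(S_{m-1}))|V_{\dlm})\ne0$, while $\det(\rho_m(S_{1,m-1})|V_{\dlm})=0$ by Lemma~\ref{det=0}(1). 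Hence $\det(\rho_m(S_m)|V_{\dlm})=0$ and the kernel is non-trivial.

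For part (2): assume $P_{\dll}(\bq)\ne0$ for all $\dll\le\dlm$ with $|\dll|\ge2$. I would show by induction on $|\dll|$ that $\det(\rho_{|\dll|}(S_{|\dll|})|V_{\dll})\ne0$, equivalently $\ker(\rho_{|\dll|}(S_{|\dll|})|V_{\dll})=0$, for all such $\dll$; the case $\dll=\dlm$ then gives $\NA(V)_{\dlm}=V_{\dlm}$. The base case $|\dll|=2$ is handled directly from Proposition~\ref{pr:detshuffle2}. For the inductive step, use $\det(\rho_m(S_m)|V_{\dll})=\det(\rho_m(S_{1,m-1})|V_{\dll})\cdot\det(\rho_m(\tau(S_{m-1}))|V_{\dll})$; the second factor is non-zero because $\tau(S_{m-1})$ acts on each orbit-component of $V_{\dll}$ in the last $m-1$ tensor slots exactly as $S_{m-1}$ does on the corresponding space $V_{\dll-\dle_i}$, which is invertible by induction since $\dll-\dle_i\le\dlm$ and $P_{\dll-\dle_i}(\bq)\ne0$ (if $|\dll-\dle_i|\ge2$; the degree-one case is trivial). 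The first factor is non-zero by Lemma~\ref{det=0}(2): indeed $\det(\rho_m(S_{1,m-2})|V_{\dll})\ne0$ holds because this determinant, by Proposition~\ref{pr:detshuffle2}(2) and induction, is a product of $f(A_{\dll'})$-type factors over smaller degrees, all non-vanishing since the relevant $P$'s are non-zero; so $P_{\dll}(\bq)\ne0$ forces $\det(\rho_m(S_{1,m-1})|V_{\dll})\ne0$.

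The main obstacle I anticipate is making precise the claim that $\rho_m(\tau(S_{m-1}))|V_{\dlm}$ decomposes, compatibly with a basis, into blocks isomorphic to $\rho_{m-1}(S_{m-1})|V_{\dlm-\dle_i}$: one must organize $V_{\dlm}$ by the first tensor factor $x_i$ and identify the action of $\tau(S_{m-1})$ on the remaining $m-1$ slots with the $S_{m-1}$-action on $V^{\otimes(m-1)}$ in degree $\dlm-\dle_i$, up to scalars coming from braiding $x_i$ past the rest. This bookkeeping is the technical heart; once it is in place, the determinant multiplicativity together with Lemmas~\ref{det=0}, \ref{le:detshuffle} and Propositions~\ref{pr:Am=0}, \ref{pr:detshuffle2} close both parts by a single simultaneous induction on $|\dlm|$. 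A secondary point to handle carefully is the propagation of relations in part (1): that a non-trivial element of $\ker(\rho_{|\dll|}(S_{|\dll|})|V_{\dll})$ times a monomial stays non-zero in $\NA(V)$, which follows from the fact that $\NA(V)$ has no zero divisors in the sense that multiplication by a non-zero monomial is injective — or, more safely, by observing directly that the image of such a product in $V_{\dlm}/\ker(\rho_m(S_m)|V_{\dlm})$ vanishes because $\ker(\rho_{|\dll|}(S_{|\dll|}))\cdot V^{\otimes(m-|\dll|)}+V^{\otimes(m-|\dll|)}\cdot\ker(\cdots)$ lies in $\ker(\rho_m(S_m))$, a standard property of the braided symmetrizer.
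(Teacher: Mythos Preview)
Your plan for part~(2) is essentially the paper's: both arguments peel off the leftmost factor $S_{1,m-1}$ from $S_m$ and control the lower degrees inductively. The paper packages this as a contrapositive with a minimal counterexample---assume a relation exists in degree $\dlm$, pick $\dll\le\dlm$ minimal with this property, note that minimality forces $\ker(\rho_l(S_{1,l-2})|V_{\dll})=0$ and $\ker(\rho_l(S_{1,l-1})|V_{\dll})\ne 0$, then invoke Lemma~\ref{det=0}(2)---but this is logically the same as your direct induction.

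For part~(1), however, you are working much harder than necessary, and the extra machinery is precisely what forces the simultaneous induction and the propagation issue you flag at the end. The paper's proof of~(1) is a single line: by Lemma~\ref{det=0}(1), $P_{\dlm}(\bq)=0$ gives $\det(\rho_m(S_{1,m-1})|V_{\dlm})=0$; since $S_m=S_{1,m-1}\,\tau(S_{m-1})$, multiplicativity of the determinant yields $\det(\rho_m(S_m)|V_{\dlm})=0$, and~\eqref{schauenburg} finishes. No induction on $m$, no case split on whether $\dlm=m_i\dle_i$, no appeal to part~(2) in lower degrees, and no propagation of relations. In particular, in your third sub-case you carefully establish $\det(\rho_m(\tau(S_{m-1}))|V_{\dlm})\ne 0$, but this is irrelevant for~(1): once one factor has vanishing determinant, the product does regardless.

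A minor correction to the obstacle you anticipate: since $\tau(S_{m-1})$ involves only $\sigma_2,\ldots,\sigma_{m-1}$, on the decomposition $V_{\dlm}=\bigoplus_{i:m_i>0} x_i\otimes V_{\dlm-\dle_i}$ it acts literally as $\id\otimes\rho_{m-1}(S_{m-1})$ on each summand, with no braiding scalars to track; likewise $\rho_m(S_{1,m-2})$ acts as $\rho_{m-1}(S_{1,m-2})\otimes\id$ on $\bigoplus_j V_{\dlm-\dle_j}\otimes x_j$. So the bookkeeping is lighter than you fear, and the paper simply cites ``the definition of $S_l$'' for it.
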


\begin{proof}
(1) Assume that $P_{\dlm}(\bq)=0$.
Then $\det (\rho_m(S_{1,m-1})|V_{\dlm})=0$ by Lemma \ref{det=0}(1). Thus
$\det (\rho_m(S_{m})|V_{\dlm})=0$ by the definition of $S_{m}$,
and the claim follows from Equation~\eqref{schauenburg}.

(2) Assume that the Nichols algebra
$\NA (V)$ has a non-trivial relation in degree $\dlm $.
Let $\dll \le \dlm $ be such that $\NA (V)$ has a
non-trivial relation in degree $\dll $ and no non-trivial relation in
any degree $< \dll $. Let $l=|\dll |$. Then $l\ge 2$,
$\ker (\rho_l(S_{1,l-2})|V_{\dll})=0$,
and $\ker (\rho_l(S_{1,l-1})|V_{\dll})\ne 0$
by \eqref{schauenburg} and by the definition of $S_l$.
Hence $P_{\dll}(\bq)=0$ by Lemma~\ref{det=0}(2). This proves (2).
\end{proof}

Based on the above proposition we obtain our first main Theorem as follows.
\begin{theo}\label{theo}
We have $\NA(V)=T(V)$ if and only if $P_{\dlm}(\bq)\ne0$
for all $\dlm\in \ndN^n$ with $|\dlm|\ge 2$.
\end{theo}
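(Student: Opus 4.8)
The plan is to deduce Theorem~\ref{theo} directly from Proposition~\ref{prop:main}, which already does essentially all the work. The statement to be proved is an equivalence, so I would argue each direction separately, being careful to reconcile the index sets $\ndN^n$ (all coordinates positive) appearing in the theorem with $\ndN_0^n$ (nonnegative coordinates) appearing in the proposition.

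First I would prove the contrapositive of the ``only if'' direction: suppose $P_{\dlm}(\bq)=0$ for some $\dlm\in\ndN^n$ with $|\dlm|\ge 2$. By Proposition~\ref{prop:main}(1) there is a non-trivial relation in $\NA(V)$ of degree $\dlm$, hence $\NA(V)\ne T(V)$. For the ``if'' direction, suppose $P_{\dlm}(\bq)\ne 0$ for all $\dlm\in\ndN^n$ with $|\dlm|\ge 2$; I want to conclude $\NA(V)=T(V)$, i.e.\ that $\NA(V)$ has no non-trivial relation in any degree. By Equation~\eqref{schauenburg} it suffices to show $\ker(\rho_m(S_m)|V_{\dlm})=0$ for every $\dlm\in\ndN_0^n$ with $|\dlm|=m\ge 2$. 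Here I would invoke Proposition~\ref{prop:main}(2): it is enough to check that $P_{\dll}(\bq)\ne 0$ for all $\dll\le\dlm$ with $|\dll|\ge 2$.

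The one genuine point to address — and the place where I expect to spend a line or two of care — is that the theorem's hypothesis only assumes $P_{\dll}(\bq)\ne 0$ for $\dll$ with \emph{all} coordinates strictly positive, whereas Proposition~\ref{prop:main}(2) wants it for all $\dll\le\dlm$, including those supported on a proper subset of the coordinates. The resolution is that for such degenerate $\dll$ the condition $P_{\dll}(\bq)\ne 0$ follows from the positive-support case applied to a smaller rank, together with the base cases in Remark~\ref{re:smallell} and Definition~\ref{def:Pm}: if $\dll=m_i\dle_i$ then $P_{\dll}=(m_i)_{p_{ii}}$, and one still needs $(m_i)_{q_{ii}}\ne0$. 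So strictly speaking the clean statement requires knowing that $P_{m_i\dle_i}(\bq)\ne0$ as well. I would handle this by observing that $P_{\dll}$ for any $\dll$ supported on a single index $i$ equals $P_{\dll'}$ computed in the rank-one braided subspace $\fie x_i$, and similarly any $\dll$ with support $\{i:l_i>0\}$ a proper nonempty subset corresponds, after relabelling, to a degree with \emph{all} coordinates positive in the braided subspace spanned by those $x_i$. Thus the hypothesis of the theorem, applied to all braided subspaces of $(V,c)$ of diagonal type spanned by subsets of the basis, gives exactly $P_{\dll}(\bq)\ne0$ for all $\dll\le\dlm$ with $|\dll|\ge2$, and Proposition~\ref{prop:main}(2) then yields the absence of relations.

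Then I would wrap up: since this holds for every such $\dlm$, $\NA(V)$ has no non-trivial relations at all, so $\NA(V)=T(V)$ by \eqref{schauenburg}. The main obstacle is not computational but bookkeeping: making the index-set discrepancy between $\ndN^n$ and $\ndN_0^n$ rigorous by reducing the degenerate-support cases to the positive-support case in lower rank (and invoking the explicit single-variable formula $(m)_{q_{ii}}$). Everything else is an immediate citation of Proposition~\ref{prop:main} and Equation~\eqref{schauenburg}.
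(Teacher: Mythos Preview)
Your approach is correct and essentially identical to the paper's: both directions follow immediately from Proposition~\ref{prop:main}, and the paper's entire proof is the one-line citation you give.

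You are right to flag the discrepancy between $\ndN^n$ in the theorem and $\ndN_0^n$ in Proposition~\ref{prop:main}; note that the version of the theorem stated in the Introduction uses $\ndN_0^n$, so this is almost certainly a typo in Section~\ref{freeness}. However, your proposed workaround does not actually repair the $\ndN^n$ statement. The hypothesis ``$P_{\dlm}(\bq)\ne 0$ for all $\dlm\in\ndN^n$'' concerns only the fixed rank-$n$ space $V$; it does not give you, and cannot be ``applied to'', the braided subspaces spanned by proper subsets of the basis. Concretely, one can take $n=2$, $q_{11}=-1$, and the remaining $q_{ij}$ generic: then $P_{(2,0)}(\bq)=1+q_{11}=0$ forces $x_1^2=0$ in $\NA(V)$, so $\NA(V)\ne T(V)$, yet $P_{\dlm}(\bq)\ne 0$ for all $\dlm\in\ndN^2$ with $|\dlm|\ge 2$ can still hold. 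Thus the theorem read literally with $\ndN^n$ is false in the ``if'' direction, and no bookkeeping argument can rescue it; the correct fix is simply to read $\ndN_0^n$, after which your proof and the paper's coincide.
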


\begin{proof}
 The claim follows immediately from Proposition~\ref{prop:main}.
\end{proof}

\begin{exam}(Diophantine equation)\label{Diophequation}
Assume that the characteristic of $\fie $ is neither $2$ nor $3$
and that $\bq=(q^{a_{ij}})_{i,j\in I}$ with $q\in \fie ^\times $ not a root of $1$
and $(a_{ij})_{i,j\in I}\in \ndZ^{n\times n}$.
For any $\dlm =(m_1,\dots,m_n)\in \ndN_0^n$ let
$$K(\dlm)=\sum_{i,j=1}^{n}a_{ij}m_im_j,\quad  \lambda(\dlm)=\sum_{i=1}^{n}a_{ii}m_i.$$
Then $P_{\dlm}(\bq)\ne 0$ in the following cases:
\begin{enumerate}
\item $\dlm=m\dle_i$, $m\ge 2$, $1\le i\le n$,
\item $\dlm=2\dle_i+2m\dle_j$, $m\ge 1$, $i,j\in I$, $i\ne j$,
\item $\dlm=3\dle_i+3\dle_j$ or $\dlm=4\dle_i+4\dle_j$, $i,j\in I$, $i\ne j$.
\end{enumerate}
Moreover, for any other $\dlm\in \ndN_0^n$ with $|\dlm|\ge 2$,
$$ P_{\dlm}(\bq)=0 \text{ if and only if }K(\dlm)=\lambda (\dlm). $$
Hence, by Theorem~\ref{theo},
$\NA(V)=T(V)$ if and only if there is no solution of the diophantine equation
$K(\dlm)=\lambda(\dlm)$
with $\dlm\in \ndN_0^n$, $|\dlm|\ge 2$,
$\dlm \notin \{m\dle_i\mid m\ge 2\}\cup \{2\dle_i+2m\dle_j,3\dle_i+3\dle_j,4\dle_i+4\dle_j
\mid m\ge 1,i,j\in I, i\ne j\}$.

We now provide concrete examples of Nichols algebras of diagonal type which are identified
by this example as a free algebra.
Let $n=2$ and let $a,b$ be positive integers with $a>b$.
Let $q\in \fie^\times $ be not a root of $1$ and let
$\bq=(q_{ij})_{1\le i,j\le 2}$ with $q_{11}=q_{22}=q^a$ and $q_{12},q_{21}\in q^{\ndZ}$
with $q_{12}q_{21}=q^{-b}$. For any $\dlm=(m_1,m_2)\in \ndN_0^2$ with $|\dlm|\ge2$
we have
$$K(\dlm)=am_1^2-bm_1m_2+am_2^2,\quad \lambda(\dlm)=am_1+am_2$$
and hence
\begin{align*}
 &K(\dlm)-\lambda(\dlm)=am_1^2-bm_1m_2+am_2^2-(am_1+am_2)\\
 &\quad =a(m_1-m_2)(m_1-m_2-1)+(2a-b)m_2\Big(m_1-2+\frac{2(a-b)}{2a-b}\Big).
\end{align*}
Assume that $K(\dlm)=\lambda (\dlm)$ and $m_1,m_2>0$. By symmetry of $m_1,m_2$,
without loss of generality we may assume that $m_1\ge m_2$.
Then
$$2(m_1-m_2)(m_1-m_2-1)\ge 0, \quad m_2>0, $$
and
$m_1-2+\frac{2(a-b)}{2a-b}>m_1-2\ge 0$ for $m_1\ge 2$ since $a>b$.
Hence $K(\dlm)=\lambda(\dlm)$ implies that $m_1=1$ and hence $m_2=1$. However,
$K(1,1)-\lambda(1,1)=-b\ne 0$. Therefore $K(\dlm)\ne \lambda (\dlm)$
for all pairs $\dlm=(m_1,m_2)$ with $m_1,m_2>0$. Thus $\NA (V)=T(V)$.
\end{exam}

\section{An upper bound on the dimension of the kernel of the shuffle map}
\label{upperbound}

Let $n\in \ndN$, $I=\{1,2,\dots,n\}$, and let $R=\ndZ [\bar q_{ij}^{\pm1}\mid i,j\in I]$.
Let $(\overline{V},\overline{c})$ be the free prebraided module of diagonal type over $R$
with basis $\bar x_1,\bar x_2,\ldots,\bar x_n$  and braiding matrix
$\bar \bq=(\bar q_{ij})_{i,j\in I}$
such that $$\overline{c}(\bar x_i\otimes \bar x_j)=\bar q_{ij}\bar x_j\otimes \bar x_i.$$

Assume that $\mathrm{char}(\fie)=0$.
Let $(V,c)$ be an $n$-dimensional braided vector space over $\fie $
with braiding matrix $\bq=(q_{ij})_{i,j\in I}$ and with basis $x_1,x_2,\ldots,x_n$
such that
$$c(x_i\otimes x_j)=q_{ij}x_j\otimes x_i \quad \text{for all $i,j\in I$.}$$
There are unique ring homomorphisms
\begin{align*}
 \eta&:R\to \fie ,\\
 \eta'&:\ndZ[p_{ij}\mid 1\le i,j\le n]\to R,\\
 \eta''=\eta \eta'&:\ndZ[p_{ij}\mid 1\le i,j\le n]\to \fie
\end{align*}
with $\eta(\bar q_{ij})=q_{ij}$, $\eta'(p_{ij})=\bar q_{ij}$
for all $i,j\in I$. We view them as evaluation at $\bq$, $\bar \bq$, and $\bq$, respectively.
Correspondingly, we write
$$ \eta(\bar p)=\bar p(\bq ), \quad \eta'(p)=p(\bar \bq), \quad \eta''(p)=p(\bq) $$
for any $p\in \ndZ[p_{ij}\mid 1\le i,j\le n]$ and any $\bar p\in R$.

Let $(\alpha_{ij})_{i,j\in I}$ be a basis of $\ndZ^{n\times n}$.
For any $\alpha =\sum_{i,j\in I}a_{ij}\alpha_{ij}$ let
$$ \bar q_{\alpha}=\prod_{i,j\in I}\bar q_{ij}^{a_{ij}}\in R,\quad
   q_{\alpha}=\bar q_\alpha(\bq)=\prod_{i,j\in I}q_{ij}^{a_{ij}}\in \fie^\times.
$$

\begin{lemm}\label{ring morph}
 Let $\dlm\in \ndN_0^n$ with $|\dlm|\ge2$.
 Let $\eta_1:\fie[t,t^{-1}]\rightarrow\fie$ be the ring homomorphism given by
 $$ \eta_1(t)=Q_{\dlm}(\bq).$$
 Then there exists a ring homomorphism $\eta_2:R\rightarrow \fie[t,t^{-1}]$
 such that
 $$ \eta_2(Q_{\dlm}(\bar \bq ))=t, \quad \eta_1\eta_2=\eta. $$
\end{lemm}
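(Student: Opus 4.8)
The plan is to construct $\eta_2$ as a composition that unwinds the monomial $Q_{\dlm}(\bar\bq)$ back to a single Laurent variable, in exact parallel with the proof of Lemma~\ref{le:PhikQirred}. First I would recall from the construction in Lemma~\ref{le:PhikQirred} that, writing $m_{ii}=m_i(m_i-1)/N(\dlm)$ and $m_{jl}=m_jm_l/N(\dlm)$, one has $\gcd(m_{ij}\mid i,j\in I)=1$ and $Q_{\dlm}=\prod_{i,j\in I}p_{ij}^{m_{ij}}$. Applying $\eta'$, this gives $Q_{\dlm}(\bar\bq)=\prod_{i,j\in I}\bar q_{ij}^{m_{ij}}\in R=\ndZ[\bar q_{ij}^{\pm1}\mid i,j\in I]$. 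Since $R$ is a Laurent polynomial ring in the $n^2$ variables $\bar q_{ij}$ and the exponent vector $(m_{ij})$ is primitive, Lemma~\ref{le:Laurentauto} furnishes a ring automorphism $\psi$ of $R$ with $\psi(\bar q_{11})=Q_{\dlm}(\bar\bq)$; more precisely, $\psi$ sends the first of the variables to $Q_{\dlm}(\bar\bq)$ and is built from a matrix in $\mathrm{GL}$ over $\ndZ$ acting on the exponent lattice $\ndZ^{n\times n}$.

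Next I would define $\eta_2$ on generators. Pick the automorphism $\psi$ above and compose it with the obvious specialization $R\to\fie[t,t^{-1}]$ that sends $\bar q_{11}\mapsto t$ and $\bar q_{ij}\mapsto 1$ for $(i,j)\ne(1,1)$; call this specialization $\pi$. Then set $\eta_2=\pi\circ\psi^{-1}$. Wait --- I want $\eta_2(Q_{\dlm}(\bar\bq))=t$, and $\psi^{-1}(Q_{\dlm}(\bar\bq))=\bar q_{11}$, so $\eta_2(Q_{\dlm}(\bar\bq))=\pi(\bar q_{11})=t$, as desired. So the correct definition is $\eta_2=\pi\circ\psi^{-1}$, a composition of ring homomorphisms $R\xrightarrow{\psi^{-1}}R\xrightarrow{\pi}\fie[t,t^{-1}]$, hence a ring homomorphism. (Note $\pi$ lands in $\fie[t,t^{-1}]$ via the structure map $\ndZ\to\fie$ followed by $\bar q_{11}\mapsto t$, all other generators to $1$; this is well-defined since $\fie[t,t^{-1}]$ inverts $t$ and the units $\bar q_{ij}^{\pm1}$ map to the unit $1$.)

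Then I would verify the identity $\eta_1\eta_2=\eta$. Both sides are ring homomorphisms $R\to\fie$, so it suffices to check agreement on the generators $\bar q_{ij}$. On the one hand $\eta(\bar q_{ij})=q_{ij}=\bar q_{ij}(\bq)$. On the other hand, $\eta_2(\bar q_{ij})=\pi(\psi^{-1}(\bar q_{ij}))$, which is a monomial $t^{c_{ij}}$ in $t$ where $(c_{ij})$ is determined by the $\ndZ$-linear inverse of the lattice automorphism defining $\psi$; applying $\eta_1$ replaces $t$ by $Q_{\dlm}(\bq)$, giving $Q_{\dlm}(\bq)^{c_{ij}}$. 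Since $\psi(\bar q_{11})=Q_{\dlm}(\bar\bq)=\prod \bar q_{kl}^{m_{kl}}$, tracking the lattice map shows $\prod_{i,j}\bar q_{ij}^{c_{ij}\cdot(\text{columns})}$ recovers each $\bar q_{ij}$ up to the relation encoded by $\psi$; evaluating at $\bq$ and using $Q_{\dlm}(\bq)=\prod q_{kl}^{m_{kl}}$ yields exactly $q_{ij}$. Concretely, the cleanest way to organize this last check is: $\eta_1\eta_2 = \eta_1\circ\pi\circ\psi^{-1}$, and $\eta_1\circ\pi\colon R\to\fie$ is the specialization $\bar q_{11}\mapsto Q_{\dlm}(\bq)$, all other $\bar q_{ij}\mapsto 1$; since $\psi$ is the lift to $R$ of the corresponding lattice automorphism and $\eta\colon R\to\fie$ sends $\bar q_{ij}\mapsto q_{ij}$, one reduces to checking that the composite lattice map sends the standard basis to the exponent vectors of the $q_{ij}$ --- equivalently, that $(\eta_1\pi)(\psi^{-1}(\bar q_{ij}))=q_{ij}$, which holds because $\psi^{-1}$ is chosen over $\ndZ$ so that the one distinguished coordinate carries $Q_{\dlm}$ and $\eta$ itself factors through this same change of coordinates composed with $\bar q_{11}\mapsto Q_{\dlm}(\bq)$. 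The main obstacle is purely bookkeeping: making the lattice-automorphism argument precise enough that the compatibility $\eta_1\eta_2=\eta$ is transparent rather than a leap; everything else (that the maps are well-defined ring homomorphisms, that $Q_{\dlm}(\bar\bq)\mapsto t$) is immediate from Lemmas~\ref{le:mmatrix} and~\ref{le:Laurentauto}.
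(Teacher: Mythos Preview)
Your construction has a genuine error in the definition of $\pi$. You send $\bar q_{11}\mapsto t$ and all other $\bar q_{ij}\mapsto 1$, and then set $\eta_2=\pi\circ\psi^{-1}$. With this choice the first property $\eta_2(Q_{\dlm}(\bar\bq))=t$ does hold, but the factorization $\eta_1\eta_2=\eta$ fails in general. Indeed, $\eta_1\pi$ is the specialization $\bar q_{11}\mapsto Q_{\dlm}(\bq)$, $\bar q_{ij}\mapsto 1$ for $(i,j)\ne(1,1)$, so $\eta_1\eta_2(\bar q_{ij})=(\eta_1\pi)(\psi^{-1}(\bar q_{ij}))$ is always a power of $Q_{\dlm}(\bq)$. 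But $\eta(\bar q_{ij})=q_{ij}$, and there is no reason every $q_{ij}$ should be a power of $Q_{\dlm}(\bq)$. For a concrete failure take $n=2$, $\dlm=(1,1)$, so $Q_{\dlm}(\bar\bq)=\bar q_{12}\bar q_{21}$; with any admissible $\psi$ your $\eta_1\eta_2$ sends each generator to a power of $q_{12}q_{21}$, which cannot recover an arbitrary $q_{11}\in\fie^\times$.

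The fix, which is exactly what the paper does, is to let the specialization remember the values of $\bq$ on the remaining coordinates: define $\eta_2'\colon R\to\fie[t,t^{-1}]$ by $\eta_2'(\bar q_{11})=t$ and $\eta_2'(\bar q_{ij})=\psi(\bar q_{ij})(\bq)\in\fie^\times$ for $(i,j)\ne(1,1)$, and set $\eta_2=\eta_2'\circ\psi^{-1}$. Then on the generating set $\{\psi(\bar q_{ij})\}$ one has $\eta_1\eta_2(\psi(\bar q_{11}))=\eta_1(t)=Q_{\dlm}(\bq)=\eta(\psi(\bar q_{11}))$ and, for $(i,j)\ne(1,1)$, $\eta_1\eta_2(\psi(\bar q_{ij}))=\eta_1(\psi(\bar q_{ij})(\bq))=\psi(\bar q_{ij})(\bq)=\eta(\psi(\bar q_{ij}))$, so $\eta_1\eta_2=\eta$. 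Your ``bookkeeping'' paragraph is where the argument breaks: it is not bookkeeping but a substantive choice of where the other $n^2-1$ coordinates go, and sending them to $1$ is the wrong choice.
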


\begin{proof}
By Remark~\ref{Qfactor} and Lemma~\ref{le:Laurentauto},
there exists a ring automorphism $\varphi $ of $R$ with
$$\varphi (\bar q_{11})
=\prod_{i=1}^n\bar q_{ii}^{m_i(m_i-1)}\prod_{1\le i<j\le n}(\bar q_{ij}\bar q_{ji})^{m_im_j}.$$
Let $\eta'_2:R\to \fie [t,t^{-1}]$ be the ring homomorphism with
$\eta'_2(\bar q_{11})=t$, $\eta'_2(\bar q_{ij})=\varphi(\bar q_{ij})(\bq)$
for all $i,j\in I$ with $(i,j)\ne (1,1)$.
Then $$\eta_2=\eta'_2\varphi ^{-1}:R\to \fie[t,t^{-1}]$$
is a ring homomorphism and
\begin{align*}
\eta_2(Q_{\dlm}(\bar \bq))&=\eta'_2\varphi^{-1}\varphi(\bar q_{11})=t,\\
\eta_1\eta_2(\varphi (\bar q_{11}))&=\eta_1\eta_2(Q_{\dlm}(\bar q))=\eta_1(t)=Q_{\dlm}(\bq),\\
\eta_1\eta_2(\varphi(\bar q_{ij}))&=\eta_1\eta'_2(\bar q_{ij})
=\eta_1(\varphi(\bar q_{ij})(\bq ))=\varphi(\bar q_{ij})(\bq )
\end{align*}
for all $i,j\in I$ with $(i,j)\ne (1,1)$.
Thus $\eta_1\eta_2=\eta$.
\end{proof}

\begin{lemm}\label{le:irrf}
Let $\dlm\in \ndN_0^n$ with $|\dlm |\ge 2$. Assume that $P_{\dlm}(\bq)=0$.
Then $Q_{\dlm}(\bq)^{N(\dlm)}=1$. Let $d=\ord(Q_{\dlm}(\bq))$.
Then $\Phi_d(Q_{\dlm})$
is the unique irreducible factor $f$ of $P_{\dlm}\in \ndZ[p_{ij}|i,j\in I]$
such that $f(\bq)=0$.
\end{lemm}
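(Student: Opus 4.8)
The plan is to deduce the root-of-unity statement from the divisibility $P_{\dlm}\mid 1-Q_{\dlm}^{N(\dlm)}$, and then to identify the vanishing irreducible factor using the classification of factors of $P_{\dlm}$ in Lemma~\ref{le:PhikQirred}. First, by Remark~\ref{Qfactor} the polynomial $P_{\dlm}$ divides $1-Q_{\dlm}^{N(\dlm)}$ in $\ndZ[p_{ij}\mid i,j\in I]$; applying the evaluation homomorphism $\eta''$ at $\bq$ and using $P_{\dlm}(\bq)=0$ yields $1-Q_{\dlm}(\bq)^{N(\dlm)}=0$. Since $\bq\in(\fie^\times)^{n\times n}$ and $Q_{\dlm}$ is a monomial, $\alpha:=Q_{\dlm}(\bq)\in\fie^\times$, so $d=\ord(\alpha)$ is a well-defined positive integer with $d\mid N(\dlm)$; this is the first assertion.

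Next I would establish a cyclotomic lemma: for every $l\in\ndN$ one has $\Phi_l(\alpha)=0$ if and only if $l=d$. This is where $\mathrm{char}(\fie)=0$ enters, because then $x^k-1\in\fie[x]$ is separable for all $k$, so in $x^k-1=\prod_{e\mid k}\Phi_e$ no two factors share a root in an algebraic closure of $\fie$. From $\alpha^d=1$ and $\prod_{e\mid d}\Phi_e(\alpha)=0$ exactly one factor vanishes at $\alpha$, say $\Phi_{l_0}(\alpha)=0$ with $l_0\mid d$; then $\Phi_{l_0}\mid x^{l_0}-1$ gives $\alpha^{l_0}=1$, hence $d\mid l_0$ and so $l_0=d$, i.e.\ $\Phi_d(\alpha)=0$. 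Conversely, if $\Phi_l(\alpha)=0$ then $\alpha^l=1$, so $d\mid l$, and now $\Phi_d$ and $\Phi_l$ are both factors of the separable polynomial $x^l-1$ with the common root $\alpha$, which forces $l=d$. Note also that $\Phi_l(Q_{\dlm})(\bq)=\Phi_l(\alpha)$ since $\eta''$ is a ring homomorphism and $\Phi_l$ has integer coefficients.

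Finally I would combine this with Lemma~\ref{le:PhikQirred}. Factoring $P_{\dlm}$ into irreducibles in $\ndZ[p_{ij}\mid i,j\in I]$, every irreducible factor has the form $\Phi_l(Q_{\dlm})$ with $l\mid N(\dlm)$. Since $P_{\dlm}(\bq)=0$ in the field $\fie$, some irreducible factor $f=\Phi_l(Q_{\dlm})$ satisfies $f(\bq)=\Phi_l(\alpha)=0$, hence $l=d$ by the cyclotomic lemma; thus $\Phi_d(Q_{\dlm})$ occurs among the irreducible factors of $P_{\dlm}$ and vanishes at $\bq$. And if $\Phi_{l'}(Q_{\dlm})$ is any irreducible factor of $P_{\dlm}$ with $\Phi_{l'}(Q_{\dlm})(\bq)=0$, then $\Phi_{l'}(\alpha)=0$, so $l'=d$ as well; hence $\Phi_d(Q_{\dlm})$ is the unique such factor, giving the second assertion. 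The only genuinely delicate point is the cyclotomic lemma, and it is delicate only in that it uses $\mathrm{char}(\fie)=0$ through separability of $x^k-1$; the rest is bookkeeping with the factorizations already provided by Remark~\ref{Qfactor} and Lemma~\ref{le:PhikQirred}.
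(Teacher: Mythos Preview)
Your proof is correct and follows exactly the approach the paper intends: the paper's own proof is the single sentence ``The claim follows directly from Remark~\ref{Qfactor} and Lemma~\ref{le:PhikQirred},'' and you have simply unpacked what that entails, including the standard cyclotomic fact that in characteristic~$0$ one has $\Phi_l(\alpha)=0$ if and only if $l=\ord(\alpha)$. Your explicit invocation of $\mathrm{char}(\fie)=0$ (ambient in Section~\ref{upperbound}) for the separability of $x^k-1$ is the right place to use it.
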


\begin{proof}
	The claim follows directly from Remark~\ref{Qfactor} and
	Lemma~\ref{le:PhikQirred}.
\end{proof}

\begin{lemm}\label{le:n1n2}
Let $\dlm\in \ndN^n_0$ with $m=|\dlm|\ge 2$.
Assume that $P_{\dlm}(\bq)=0$. Let $d=\ord (Q_{\dlm}(\bq))$ and $d'=N(\dlm)/d$.
Then
\begin{enumerate}
\item $\Phi_d(Q_{\dlm}(\bar \bq))$ does not appear in the prime decomposition of
the polynomial $\det(\overline{\rho}_m(S_{1,m-2})|\olV_{\dlm})\in R$, and
\item
$\Phi_d(Q_{\dlm}(\bar \bq))$ appears
$n_1(\bq)-n_2(\bq)$ times in the prime decomposition of
$\det(\overline{\rho}_m(S_{1,m-1})|\olV_{\dlm})$,
where $$n_1(\bq)=\sum_{i:m_i>0}\sum_{k\mid \gcd(\dlm-\dle_i),k\mid d'}\ell_{(\dlm-\dle_i)/k},$$
and
$$n_2(\bq)=\sum_{k\mid \gcd(\dlm),k\mid d'}\ell_{\dlm/k}.$$
\end{enumerate}
\end{lemm}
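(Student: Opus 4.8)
The plan is to reduce both statements to the determinant formulas of Lemma~\ref{det1} and Lemma~\ref{det2} together with the irreducibility statement of Lemma~\ref{le:PhikQirred}, using the auxiliary ring homomorphism $\eta_2:R\to\fie[t,t^{-1}]$ from Lemma~\ref{ring morph} to detect the multiplicity of the irreducible factor $\Phi_d(Q_{\dlm}(\bar\bq))$. First I would record that by Lemma~\ref{le:irrf} the hypothesis $P_{\dlm}(\bq)=0$ forces $Q_{\dlm}(\bq)^{N(\dlm)}=1$, so $d=\ord(Q_{\dlm}(\bq))$ divides $N(\dlm)$ and $\Phi_d(Q_{\dlm})$ is a genuine irreducible factor of $P_{\dlm}$; moreover $\Phi_d(Q_{\dlm}(\bar\bq))$ is irreducible in $R$ by Lemma~\ref{le:PhikQirred}. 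The key observation is that for a monomial of the form $1-Q_{\dlm}(\bar\bq)^{N(\dlm)/k}=\prod_{l\mid N(\dlm)/k}\Phi_l(Q_{\dlm}(\bar\bq))$, the irreducible $\Phi_d(Q_{\dlm}(\bar\bq))$ occurs exactly once if $d\mid N(\dlm)/k$ and not at all otherwise — this is precisely the unique-factorization statement already granted in Lemma~\ref{le:PhikQirred}.

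For part (1), I would apply Lemma~\ref{det2} (with $R$ as above, $f=\eta'$, so $f(Q_{\dlm})=Q_{\dlm}(\bar\bq)$) — but note $\det(\overline\rho_m(S_{1,m-2})|\olV_{\dlm})$ is an iterated product; more cleanly, I would argue by induction on $m$ using Proposition~\ref{pr:detshuffle2}(2), which expresses $\det(\overline\rho_m(S_{1,m-1})|\olV_{\dlm})$ as $A_{\dlm}(\bar\bq)$ times a product of $\det(\overline\rho_{m-1}(S_{1,m-2})|\olV_{\dlm-\dle_i})$ over $i$ with $m_i>0$. The base case and the case $\dlm=m_i\dle_i$ are handled by Proposition~\ref{pr:detshuffle2}(1). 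The inductive claim is that $\Phi_d(Q_{\dlm}(\bar\bq))$ does not divide $\det(\overline\rho_{m-1}(S_{1,m-2})|\olV_{\dlm-\dle_i})$ for any relevant $i$: here I would compare $Q_{\dlm-\dle_i}$ with $Q_{\dlm}$ — by Lemma~\ref{le:prime} (applied to $P_{\dlm-\dle_i}$ and $P_{\dlm}$, which are relatively prime since $\dlm-\dle_i<\dlm$) the irreducible factors of these two polynomials are disjoint, and by Lemma~\ref{le:PhikQirred} every irreducible factor of the relevant determinant over $R$ is of the form $\Phi_l(Q_{\dlm-\dle_i}(\bar\bq))$, so $\Phi_d(Q_{\dlm}(\bar\bq))$ cannot appear. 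A subtlety: one must also rule out that $\Phi_d(Q_{\dlm}(\bar\bq))$ equals $\Phi_l(Q_{\dlm'}(\bar\bq))$ for a smaller-degree $\dlm'$ with $|\dlm'|<2$ or of the form $m\dle_j$, but in those cases the corresponding determinant is a power of $(m)_{\bar q_{jj}}$ or is $1$, whose irreducible factors are cyclotomic polynomials in a single variable $\bar q_{jj}$, and $\Phi_d(Q_{\dlm})$ with $|\dlm|\ge 2$ and $m_s,m_t\ne 0$ is not of that shape. This finishes (1).

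For part (2), I would apply $\eta_2$ to the determinant identity of Lemma~\ref{le:detshuffle}, or more directly compose the chain of Proposition~\ref{pr:detshuffle2}(2) down to degree $2$; the cleanest route is Lemma~\ref{le:detshuffle} itself: it gives
\begin{align*}
\det(\overline\rho_m(S_{1,m-1})|\olV_{\dlm})\,
\prod_{k\mid\gcd(\dlm)}(1-Q_{\dlm}(\bar\bq)^{N(\dlm)/k})^{\ell_{\dlm/k}}
=\prod_{i:m_i>0}\Big(\det(\overline\rho_{m-1}(S_{1,m-2})|\olV_{\dlm-\dle_i})
\prod_{k\mid\gcd(\dlm-\dle_i)}(1-Q_{\dlm}(\bar\bq)^{N(\dlm)/k})^{\ell_{(\dlm-\dle_i)/k}}\Big).
\end{align*}
By part (1), $\Phi_d(Q_{\dlm}(\bar\bq))$ contributes $0$ to the valuation of each $\det(\overline\rho_{m-1}(S_{1,m-2})|\olV_{\dlm-\dle_i})$ on the right. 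Counting the valuation of $\Phi_d(Q_{\dlm}(\bar\bq))$ on both sides — using that it occurs exactly once in $1-Q_{\dlm}(\bar\bq)^{N(\dlm)/k}$ iff $d\mid N(\dlm)/k=d'k^{-1}\cdot$(…) i.e.\ iff $k\mid d'$, and not at all otherwise — the left side gives $(\text{val of }\det) + \sum_{k\mid\gcd(\dlm),\,k\mid d'}\ell_{\dlm/k} = (\text{val of }\det) + n_2(\bq)$, and the right side gives $\sum_{i:m_i>0}\sum_{k\mid\gcd(\dlm-\dle_i),\,k\mid d'}\ell_{(\dlm-\dle_i)/k}=n_1(\bq)$. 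Solving, the valuation of $\Phi_d(Q_{\dlm}(\bar\bq))$ in $\det(\overline\rho_m(S_{1,m-1})|\olV_{\dlm})$ is $n_1(\bq)-n_2(\bq)$, as claimed. The main obstacle is the bookkeeping in part (1) — establishing that the only irreducible factor of the $S_{1,m}$-determinants over $R$ that specializes to $0$ at $\bq$ is $\Phi_d(Q_{\dlm}(\bar\bq))$, and that this particular irreducible cannot sneak into a lower-degree determinant because $Q_{\dll}\ne Q_{\dlm}$ for $\dll<\dlm$ (Lemma~\ref{le:prime}); everything else is valuation arithmetic that Lemma~\ref{le:PhikQirred} has already set up.
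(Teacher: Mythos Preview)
Your proposal is correct and follows essentially the same approach as the paper: for part~(1) you unroll the recursion of Proposition~\ref{pr:detshuffle2}(2) and invoke Lemma~\ref{le:prime} (together with Proposition~\ref{pr:Am=0}) to see that every irreducible factor of $\det(\overline\rho_m(S_{1,m-2})|\olV_{\dlm})$ is a factor of some $P_{\dll}$ with $\dll<\dlm$ and hence coprime to $\Phi_d(Q_{\dlm}(\bar\bq))$, exactly as the paper does; for part~(2) you count the $\Phi_d(Q_{\dlm}(\bar\bq))$-valuation directly in the identity of Lemma~\ref{le:detshuffle}, whereas the paper first passes to the quotient $A_{\dlm}$ via Proposition~\ref{pr:detshuffle2}(2) and then counts in the numerator and denominator of Definition~\ref{defAm} --- the two computations are the same up to whether one divides before or after taking valuations. (Your mention of $\eta_2$ is unnecessary here; the paper reserves that tool for Lemma~\ref{le:rmul1}.)
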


\begin{proof}
Assume first that $\dlm=m\dle_i$ for some $i\in I$. Then $Q_{\dlm}=p_{ii}$,
$P_{\dlm}=(m)_{p_{ii}}$, $N(\dlm)=m(m-1)$, $q_{ii}\ne 1$, and $d\mid m$, $d>1$.
Hence $\gcd(m-1,d')=m-1$, $\gcd(m,d')=m/d$, and therefore
$$ \det(\overline{\rho}_m(S_{1,m-1})|\olV_{\dlm})=(m)_{\bar q_{ii}},
\quad n_1(\bq)=1,\quad n_2(\bq)=0 $$
by Proposition~\ref{pr:detshuffle2}(1)
and Remark~\ref{re:smallell}. Thus (2) holds in this case. Moreover,
(1) is valid since $\det(\overline{\rho}_m(S_{1,m-2})|\olV_{\dlm})=(m-1)_{\bar q_{ii}}$
and $d$ does not divide $m-1$.

Assume that there exists $1\le i<j\le n$ with $m_i,m_j\ne 0$.
Since $P_{k\dle_i}=\det(\overline{\rho}_k(S_{1,k-1})|\olV_{k\dle_i})$
for all $k\ge 2$ and $i\in I$, Propositions~\ref{pr:detshuffle2}(2) and
\ref{pr:Am=0} imply that any irreducible factor of
$\det(\overline{\rho}_{m-1}(S_{1,m-2})|\olV_{\dlm})$
is an irreducible factor of some $P_{\dll}$ with $\dll <\dlm$.
Thus, by Proposition~\ref{pr:Am=0} and Lemma~\ref{le:prime},
$\Phi_d(Q_{\dlm}(\bar \bq))$ and
$\det(\overline{\rho}_{m-1}(S_{1,m-2})|\olV_{\dlm})$
are relatively prime, which proves (1). By Proposition~\ref{pr:detshuffle2}(2),
for the proof of (2) it remains to determine the
multiplicity of the irreducible factor $\Phi_d(Q_{\dlm})$ in $A_{\dlm }$.
Let $k\in\ndN$ with $k|N(\dlm)$. By Lemma~\ref{le:PhikQirred},
$\Phi_d(Q_{\dlm})$ has multiplicity $0$ in $Q^{N(\dlm)/k}-1$, except when $k\mid d'$,
in which case it has multiplicity $1$.
By Definition~\ref{defAm}, there are
\begin{align*}
n_1(\bq)
=\sum_{i:m_i>0}\sum_{k\mid \gcd(\dlm-\dle_i),k|d'}\ell_{(\dlm-\dle_i)/k}
\end{align*}
factors $\Phi_d(Q_{\dlm})$ in the numerator of $A_{\dlm}$
and
\begin{align*}
n_2(\bq)=\sum_{k\mid \gcd(\dlm),k|d'}\ell_{\dlm/k}
\end{align*}
factors $\Phi_d(Q_{\dlm})$ in the denominator of $A_{\dlm}$.
This proves the lemma.
\end{proof}

\begin{lemm}\label{le:rmul1}
Let $\dlm\in \ndN^n_0$ with $m=|\dlm|\ge 2$.
Suppose that $P_{\dlm}(\bq)=0$ and $P_{\dll}(\bq)\ne0$ for all $\dll<\dlm$.
Let
$\eta_1:\fie [t,t^{-1}]\to \fie $ and $\eta_2:R\to \fie [t,t^{-1}]$ be ring
homomorphisms as in
Lemma~\ref{ring morph}. Then the following hold.
\begin{enumerate}
	\item $\eta_1\eta_2(\det(\overline{\rho}_m(S_{1,m-2})|\olV_{\dlm}))\ne0$ in
		$\fie $,
	\item
		the polynomials $t-Q_{\dlm}(\bq)$ and
		$\eta_2(\det(\overline{\rho}_m(S_{1,m-2})|\olV_{\dlm}))$
		are relatively prime in $\fie [t,t^{-1}]$, and
	\item the factor $t-Q_{\dlm}(\bq)\in \fie [t,t^{-1}]$ appears
		$n_1(\bq)-n_2(\bq)$ times in the prime decomposition of
		$\eta_2(\det(\overline{\rho}_m(S_{1,m-1})|\olV_{\dlm}))$.
\end{enumerate}
\end{lemm}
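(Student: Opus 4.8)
The strategy is to transport the information collected over the big ring $R$ in Lemma~\ref{le:n1n2} down to the Laurent ring $\fie[t,t^{-1}]$ along the homomorphisms $\eta_1,\eta_2$ of Lemma~\ref{ring morph}. The key observation is that $\eta_2$, being built from a ring automorphism of $R$, is injective on $R$ viewed inside its fraction field, and more importantly it sends the irreducible factor $\Phi_d(Q_{\dlm}(\bar\bq))$ of $R$ to (a unit times) $\Phi_d(t)$ in $\fie[t,t^{-1}]$. Since $d=\ord(Q_{\dlm}(\bq))$, the linear polynomial $t-Q_{\dlm}(\bq)$ divides $\Phi_d(t)$ in $\fie[t]$; and because $\mathrm{char}(\fie)=0$, the cyclotomic polynomial $\Phi_d(t)$ is separable, so $t-Q_{\dlm}(\bq)$ appears in $\Phi_d(t)$ with multiplicity exactly one, and does not divide $\Phi_e(t)$ for any $e\ne d$. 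Thus, up to units of $\fie[t,t^{-1}]$, the factor $t-Q_{\dlm}(\bq)$ tracks precisely the factor $\Phi_d(Q_{\dlm}(\bar\bq))$ under $\eta_2$.

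First I would make this "tracking" precise: I claim that for a polynomial $g\in R$ all of whose irreducible factors are of the form $\Phi_l(Q_{\dll}(\bar\bq))$ for various $\dll$ (which is the case for the shuffle determinants, by Proposition~\ref{pr:detshuffle2}(2) and Proposition~\ref{pr:Am=0}), the multiplicity of $t-Q_{\dlm}(\bq)$ in $\eta_2(g)\in\fie[t,t^{-1}]$ equals the multiplicity of $\Phi_d(Q_{\dlm}(\bar\bq))$ in $g$. Indeed, applying the automorphism $\varphi^{-1}$ of $R$ one reduces to understanding $\eta'_2$, which sends $\bar q_{11}\mapsto t$ and every other $\bar q_{ij}$ to a nonzero scalar in $\fie$; so an irreducible factor $\Phi_l(\varphi(Q_{\dll}(\bar\bq)))$ of $\varphi(g)$ is sent to $\Phi_l$ evaluated at a Laurent monomial in $t$ with scalar coefficient, and one checks that $t-Q_{\dlm}(\bq)$ divides this image iff that monomial is just $c\cdot t$ with $\Phi_l(c\,Q_{\dlm}(\bq))=0$, i.e. essentially iff $\dll=\dlm$ and $l=d$, using the rigidity in Lemma~\ref{le:prime}/Lemma~\ref{le:PhikQirred} to rule out accidental coincidences, and separability of $\Phi_d$ to pin the multiplicity. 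This is the main technical obstacle and the only place real care is needed; everything else is bookkeeping.

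Granting that, the three claims follow immediately. For (1): by Lemma~\ref{le:n1n2}(1), $\Phi_d(Q_{\dlm}(\bar\bq))$ does not divide $\det(\overline{\rho}_m(S_{1,m-2})|\olV_{\dlm})$; since the determinant is a product of factors $\Phi_l(Q_{\dll}(\bar\bq))$ with $\dll<\dlm$ (hence, by Lemma~\ref{le:prime} and the hypothesis $P_{\dll}(\bq)\ne0$, none of them vanishes at $\bq$), applying $\eta_1\eta_2=\eta$ and evaluating gives a nonzero element of $\fie$ — concretely, $\eta$ evaluated on each $\Phi_l(Q_{\dll}(\bar\bq))$ equals $\Phi_l(Q_{\dll}(\bq))$, a nonzero scalar since $P_{\dll}(\bq)\ne0$. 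For (2): the tracking principle shows the multiplicity of $t-Q_{\dlm}(\bq)$ in $\eta_2(\det(\overline{\rho}_m(S_{1,m-2})|\olV_{\dlm}))$ equals the multiplicity of $\Phi_d(Q_{\dlm}(\bar\bq))$ in $\det(\overline{\rho}_m(S_{1,m-2})|\olV_{\dlm})$, which is $0$ by Lemma~\ref{le:n1n2}(1); so these two polynomials in $\fie[t,t^{-1}]$ are coprime. For (3): again by the tracking principle the multiplicity of $t-Q_{\dlm}(\bq)$ in $\eta_2(\det(\overline{\rho}_m(S_{1,m-1})|\olV_{\dlm}))$ equals the multiplicity of $\Phi_d(Q_{\dlm}(\bar\bq))$ in $\det(\overline{\rho}_m(S_{1,m-1})|\olV_{\dlm})$, which Lemma~\ref{le:n1n2}(2) computes to be $n_1(\bq)-n_2(\bq)$. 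I would close by remarking that coprimality in (1) and (2) is exactly what is needed so that, when one later intersects with $V_{\dlm}$ over $\fie$ by specializing $t\mapsto Q_{\dlm}(\bq)$, the corank contributed by the shuffle map $S_{1,m-1}$ is precisely $n_1(\bq)-n_2(\bq)$ and no corank is lost to the $S_{1,m-2}$ factor.
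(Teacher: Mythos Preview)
Your overall plan is sound and in the same spirit as the paper's proof, but the paper executes each step more directly, and your justification of the ``tracking principle'' contains an inaccuracy that should be fixed.

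For (1), the paper simply uses $\eta_1\eta_2=\eta$ from Lemma~\ref{ring morph}: then $\eta_1\eta_2(\det(\overline{\rho}_m(S_{1,m-2})|\olV_{\dlm}))=\det(\rho_m(S_{1,m-2})|V_{\dlm})$, which is nonzero by Proposition~\ref{prop:main}. No factorization analysis is required. For (2), the paper observes that $\eta_1(t-Q_{\dlm}(\bq))=0$; a common factor $t-Q_{\dlm}(\bq)$ would therefore force the quantity in (1) to vanish, so (2) follows immediately from (1). For (3), since (2) already disposes of the $S_{1,m-2}$ contribution, one only has to track factors of $\eta_2(A_{\dlm}(\bar\bq))$. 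But every irreducible factor of $A_{\dlm}$ has the form $\Phi_k(Q_{\dlm})$ with $k\mid N(\dlm)$, and $\eta_2(Q_{\dlm}(\bar\bq))=t$ by construction, so $\eta_2$ sends these literally to $\Phi_k(t)$. Then $t-Q_{\dlm}(\bq)$ divides $\Phi_k(t)$ iff $k=d$, and the multiplicity is one by separability in characteristic~$0$. This bypasses your general tracking principle entirely.

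Your tracking principle is in fact true, but the justification you sketch is not correct as stated. The criterion for $t-Q_{\dlm}(\bq)$ to divide $\eta_2(\Phi_l(Q_{\dll}(\bar\bq)))$ is not that ``the monomial is $c\cdot t$''; it is simply that $\eta_1\eta_2(\Phi_l(Q_{\dll}(\bar\bq)))=\Phi_l(Q_{\dll}(\bq))=0$, again because $\eta_1\eta_2=\eta$. What rules this out for the factors that actually occur in the $S_{1,m-2}$ determinant is not the coprimality statements of Lemmas~\ref{le:prime} and~\ref{le:PhikQirred} (those are statements in $\ndZ[p_{ij}]$ and say nothing about the particular evaluation at $\bq$), but rather the hypothesis $P_{\dll}(\bq)\ne 0$ for $\dll<\dlm$, combined with the fact (Propositions~\ref{pr:detshuffle2} and~\ref{pr:Am=0}) that every irreducible factor of that determinant divides some $P_{\dll}$ with $\dll<\dlm$. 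Once you replace your ``rigidity'' appeal by this argument, the tracking principle goes through; but at that point the paper's shortcut via (1)$\Rightarrow$(2) is both shorter and cleaner.
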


\begin{proof}
(1) By Lemma \ref{ring morph}, we have
\begin{align*}
	\eta_1\eta_2(\det(\overline{\rho}_m(S_{1,m-2})|\olV_{\dlm}))
	=&\eta(\det(\overline{\rho}_m(S_{1,m-2})|\olV_{\dlm}))\\
	=&\det(\rho_m(S_{1,m-2})|V_{\dlm}).
\end{align*}
As $P_{\dll}(\bq)\ne0$ for all $\dll<\dlm$, we get
$\det(\rho_m(S_{1,m-2})|V_{\dlm})\in \fie^{\times}$ because of
Proposition~\ref{prop:main}.

(2) By definition, $\eta_1(t-Q_{\dlm}(\bq))=0$. Thus (2) follows from (1).

(3) Assume first that $\dlm=m\dle_i$ for some $i\in I$. Then
$Q_{\dlm}=p_{ii}$, $\overline{\rho}_m(S_{1,m-1})|\olV_{\dlm}=(m)_{\bar q_{ii}}\id
$ by Proposition~\ref{pr:detshuffle2}(1), and hence
$$\eta_2(\det (\overline{\rho}_m(S_{1,m-1})|\olV_{\dlm}))=(m)_t. $$
Since $(m)_{q_{ii}}=0$ in $\fie $ and $\mathrm{char}(\fie )=0$, the irreducible
factor $t-q_{ii}$ appears once in $(m)_t$. Moreover, $n_1(\bq)=1$ and
$n_2(\bq)=0$, see the first part of the proof of Lemma~\ref{le:n1n2}.

Assume now that there exist $1\le i<j\le n$ with $m_i,m_j\ne 0$.
By (1),
$t-Q_{\dlm}(\bq)$ does not appear in the prime decomposition of
$\eta_2(\det(\overline{\rho}_m(S_{1,m-2})|\olV_{\dlm}))$.
Hence, by Proposition~\ref{pr:detshuffle2}(2), we have to determine the
multiplicity $M$ of $t-Q_{\dlm}(\bq)$ in the prime decomposition of
$\eta_2(A_{\dlm}(\bar \bq))$.
By the definition of $A_{\dlm}$ and by Lemma~\ref{le:Am},
$\eta_2(A_{\dlm}(\bar \bq))$ is a non-zero polynomial in $\fie[t,t^{-1}]$.
By Remark~\ref{Qfactor} and by Proposition~\ref{pr:Am=0},
$A_{\dlm}$ is a product of polynomials of the
form $\Phi _k(Q_{\dlm})$ with $k\mid N(\dlm)$. Hence
$\eta_2(A_{\dlm}(\bar \bq))$ is a
product of polynomials of the form $\Phi _k(t)$ with $k\mid N(\dlm)$,
and the multiplicity of $\Phi_k(t)$ with $k\mid N(\dlm)$ in
$\eta_2(A_{\dlm}(\bar \bq ))$ is the same
as the multiplicity of $\Phi_k(Q_{\dlm})$ in $A_{\dlm}$.
Let $d=\ord (Q_{\dlm}(\bq))$.
Then $t-Q_{\dlm}(\bq)$
divides $\Phi _k(t)$ if and only if $k=d$.
Hence $M$ is the multiplicity of $\Phi _d(Q_{\dlm})$ in $A_{\dlm}$.
Therefore, by Proposition~\ref{pr:detshuffle2}(2) and by Lemma~\ref{le:n1n2},
$M$ is the multiplicity of $\Phi_d(Q_{\dlm}(\bar \bq))$
in $\det(\overline{\rho}_m(S_{1,m-1})|\olV_{\dlm})$, that is, $M=n_1(\bq)-n_2(\bq)$.
\end{proof}


\begin{prop}\label{prop upperbound}
Let $\dlm\in \ndN^n_0$ with $m=|\dlm|\ge2$.
Suppose that $P_{\dlm}(\bq)=0$ and that $P_{\dll}(\bq)\ne0$ for all $\dll<\dlm$ with $|\dll|\ge 2$.
Then
$$\dim (\ker(\rho_m(S_{1,m-1})|V_{\dlm}))\le n_1(\bq)-n_2(\bq).$$
\end{prop}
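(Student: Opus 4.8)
The plan is to realize the matrix of $\rho_m(S_{1,m-1})|V_{\dlm}$ as the reduction modulo a point of a matrix over the one-variable ring $\fie[t,t^{-1}]$ obtained by deforming $\bq$ as in Lemma~\ref{ring morph}, and then to bound the corank of this reduction by the $\pi$-adic valuation of its determinant, which Lemma~\ref{le:rmul1}(3) identifies with $n_1(\bq)-n_2(\bq)$. Concretely: fix the monomial basis of $\olV_{\dlm}$ indexed by the words in $\bX_{\dlm}$ together with the corresponding basis of $V_{\dlm}$, so that $V_{\dlm}=\olV_{\dlm}\otimes_R\fie$ along $\eta$ and $\rho_m(g)=\overline{\rho}_m(g)\otimes_R\id$ for every $g\in\BG m$; in particular the matrix of $\rho_m(S_{1,m-1})|V_{\dlm}$ is $\eta$ applied entrywise to the matrix $\overline{M}$ of $\overline{\rho}_m(S_{1,m-1})|\olV_{\dlm}$. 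Let $\eta_1\colon\fie[t,t^{-1}]\to\fie$ and $\eta_2\colon R\to\fie[t,t^{-1}]$ be as in Lemma~\ref{ring morph}, put $N=\dim_\fie V_{\dlm}$, and set $B=\eta_2(\overline{M})\in\fie[t,t^{-1}]^{N\times N}$. Since $\eta_1\eta_2=\eta$, applying $\eta_1$ entrywise to $B$ recovers the matrix of $\rho_m(S_{1,m-1})|V_{\dlm}$. As $\bq\in(\fie^\times)^{n\times n}$ we have $Q_{\dlm}(\bq)\in\fie^\times$, so $\mathfrak p=(t-Q_{\dlm}(\bq))$ is a maximal ideal of $\fie[t,t^{-1}]$ with residue field $\fie$ and $\eta_1$ is the reduction modulo $\mathfrak p$; hence $\dim_\fie\ker(\rho_m(S_{1,m-1})|V_{\dlm})$ equals the corank of $B$ modulo $\mathfrak p$.

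Next I would localize: $\cO=\fie[t,t^{-1}]_{\mathfrak p}$ is a discrete valuation ring with uniformizer $\pi=t-Q_{\dlm}(\bq)$ and residue field $\fie$. By Lemma~\ref{le:rmul1}(3) (whose proof also gives, via Proposition~\ref{pr:detshuffle2}, that this element is nonzero), $\det B=\eta_2(\det(\overline{\rho}_m(S_{1,m-1})|\olV_{\dlm}))$ is a nonzero element of $\fie[t,t^{-1}]$ with $v_\pi(\det B)=n_1(\bq)-n_2(\bq)$; in particular $B$ is an injective endomorphism of $\cO^N$. By the structure theorem over the principal ideal domain $\cO$ one has $\cO^N/B\cO^N\cong\bigoplus_{i=1}^N\cO/\pi^{e_i}$ with $e_i\ge 0$ and $\sum_{i=1}^Ne_i=v_\pi(\det B)$. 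Applying the right exact functor $-\otimes_\cO\fie$ to $\cO^N\xrightarrow{B}\cO^N\to\cO^N/B\cO^N\to 0$ identifies $(\cO^N/B\cO^N)\otimes_\cO\fie$ with the cokernel of $B$ modulo $\mathfrak p$, which by the displayed decomposition has dimension $\#\{i\colon e_i\ge 1\}$. Since the kernel and cokernel of a square matrix over a field have the same dimension,
\[
\dim_\fie\ker(\rho_m(S_{1,m-1})|V_{\dlm})=\#\{i\colon e_i\ge 1\}\le\sum_{i=1}^Ne_i=n_1(\bq)-n_2(\bq),
\]
as claimed.

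The substantive content is thus carried entirely by the earlier results — above all by Lemma~\ref{le:rmul1}(3), which computes the vanishing order, and by the nonvanishing of the generic determinant — while the present step is only the elementary semicontinuity fact that the corank of a matrix over a discrete valuation ring cannot exceed the $\pi$-adic valuation of its determinant; so I expect no real obstacle here. The one point to keep in mind is that the uniform argument above also covers the case $\dlm=m\dle_i$, in which $B=(m)_t$, $v_\pi(B)=1$ because $\mathrm{char}\,\fie=0$, and $n_1(\bq)-n_2(\bq)=1$, consistently with $\dim_\fie\ker(\rho_m(S_{1,m-1})|V_{\dlm})=1$.
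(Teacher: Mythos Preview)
Your proof is correct and follows essentially the same approach as the paper: both pass to the one-variable ring $\fie[t,t^{-1}]$ via $\eta_2$, invoke Lemma~\ref{le:rmul1}(3) to identify the multiplicity of $t-Q_{\dlm}(\bq)$ in the determinant as $n_1(\bq)-n_2(\bq)$, and then bound the corank of the reduction by that multiplicity using a diagonal form over a PID. The only cosmetic difference is that the paper takes the Smith normal form directly over $\fie[t,t^{-1}]$, whereas you localize at $\mathfrak p$ first and apply the structure theorem over the DVR $\cO$; these are the same argument in different clothing.
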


\begin{proof}
%
Let $\eta_1:\fie [t,t^{-1}]\to \fie $ and $\eta_2:R\to \fie [t,t^{-1}]$ be ring homomorphisms
as in Lemma~\ref{ring morph}.
Let $M$ be the Smith normal form of $\eta_2(\overline{\rho}_m(S_{1,m-1})|\olV_{\dlm})$,
which is a diagonal matrix.
Then $\det(M)=\det (\eta_2(\overline{\rho}_m(S_{1,m-1})|\olV_{\dlm}))
\ne 0$ by Lemma~\ref{le:rmul1}(3), and hence
there is no zero on the diagonal of $M$. Again by Lemma~\ref{le:rmul1}(3),
$t-Q_{\dlm}(\bq)$ appears $n_1(\bq)-n_2(\bq)$ times in the prime decomposition of
$\det(M)$. Hence $t-Q_{\dlm}(\bq)$ appears in at most $n_1(\bq)-n_2(\bq)$
diagonal entries of $M$ as an irreducible factor. Then
\begin{align*}
  \dim (\ker (\rho_m(S_{1,m-1})|V_{\dlm}))
  &=\dim (\ker ( \eta_1\eta_2(\overline{\rho}_m(S_{1,m-1})|\olV_{\dlm})))\\
  &=\dim (\ker (\eta_1(M))\le n_1(\bq)-n_2(\bq)).
\end{align*}
Hence the proposition holds.
\end{proof}

\section{The dimension of the kernel of shuffle map}
\label{dim}

We use some notation and conventions from the previous section.
So let us assume that $\mathrm{char}(\fie)=0$. Let $n\in \ndN $, let $\bq=(q_{ij})_{1\le i,j\le n}
\in (\fie ^\times)^{n\times n}$, and let $(V,c)$ be a braided vector space of diagonal type
with basis $x_1,\dots,x_n$ such that $c(x_i\ot x_j)=q_{ij}x_j\ot x_i$ for all $1\le i,j\le n$.
For each $\dlm \in \ndN_0^n$ with $|\dlm|\ge 2$ let $n_1(\bq),n_2(\bq)\ge 0$
be the integers defined in Lemma~\ref{le:n1n2}(2).

 In this section we determine the dimension of the kernel of the shuffle map
 $\rho_{|\dlm|}(S_{1,|\dlm|-1})|V_{\dlm}$ for those
 $\dlm \in \ndN_0^n$ with $|\dlm|\ge 2$, $P_{\dlm}(\bq)=0$,
 and $P_{\dll}(\bq)\ne 0$ for all $\dll<\dlm$ with $|\dll|\ge 2$.

\begin{prop}\label{lower bound}
	Let $\dlm\in \ndN_0^n$ with $m=|\dlm|\ge 2$.
	Suppose that $P_{\dlm}(\bq)=0$ and $P_{\dll}(\bq)\ne0$ for all $\dll<\dlm$
	with $|\dll|\ge 2$. Then
\begin{align*}
	\dim(\ker(\rho_m(S_{1,m-1})|V_{\dlm}))\ge n_1(\bq)-n_2(\bq),
\end{align*}
\end{prop}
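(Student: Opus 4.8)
The plan is to combine Proposition~\ref{prop upperbound}, which gives the upper bound $\dim(\ker(\rho_m(S_{1,m-1})|V_{\dlm}))\le n_1(\bq)-n_2(\bq)$, with Lemmas~\ref{det1} and~\ref{det2}, which compute the dimension of the kernel of the relevant operators on $\olV_\dlm$ over a field directly in terms of orbit data and $\ord(f(Q_\dlm))$. Since $P_\dlm(\bq)=0$ implies $Q_\dlm(\bq)^{N(\dlm)}=1$ by Lemma~\ref{le:irrf}, we may set $d=\ord(Q_\dlm(\bq))$ and $d'=N(\dlm)/d$, so that $d\mid N(\dlm)$; these are exactly the hypotheses under which Lemmas~\ref{det1} and~\ref{det2} give the ``moreover'' formulas. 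First I would apply Lemma~\ref{det1} with $R=\fie$, $f=\eta''$ (evaluation at $\bq$), yielding
\begin{align*}
\dim(\ker(\rho_m(1-\sigma_{m-1}\cdots\sigma_1)|V_\dlm))=\sum_{k\mid\gcd(\dlm),\,k\mid d'}\ell_{\dlm/k}=n_2(\bq),
\end{align*}
and Lemma~\ref{det2}, yielding
\begin{align*}
\dim(\ker(\rho_m(1-\sigma_{m-1}\cdots\sigma_1^2)|V_\dlm))=\sum_{i:m_i>0}\sum_{k\mid\gcd(\dlm-\dle_i),\,k\mid d'}\ell_{(\dlm-\dle_i)/k}=n_1(\bq).
\end{align*}

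Next I would exploit the factorization identity of Lemma~\ref{basic Lemm}, namely $(1-\sigma_{m-1}\cdots\sigma_1)S_{1,m-1}=S_{1,m-2}(1-\sigma_{m-1}\cdots\sigma_1^2)$, read in $\End(V^{\otimes m})$ after applying $\rho_m$ and restricting to $V_\dlm$. The hypothesis $P_\dll(\bq)\ne0$ for all $\dll<\dlm$ with $|\dll|\ge2$ gives, via Proposition~\ref{prop:main}, that $\rho_m(S_{1,m-2})|V_\dlm$ is invertible on $V_\dlm$ (more precisely, $\det(\rho_m(S_{1,m-2})|V_\dlm)\in\fie^\times$, since the relevant determinant is a product over $\dll\le\dlm$, $\dll\ne\dlm$, of nonzero $P_\dll(\bq)$ up to a nonzero scalar -- this is the content of the proof of Proposition~\ref{prop:main}(2), or alternatively one argues through Lemma~\ref{le:rmul1}(1)). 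Therefore $S_{1,m-2}(1-\sigma_{m-1}\cdots\sigma_1^2)$ and $1-\sigma_{m-1}\cdots\sigma_1^2$ have the same rank on $V_\dlm$, hence the same kernel dimension, namely $n_1(\bq)$. On the other hand the left-hand side $\rho_m((1-\sigma_{m-1}\cdots\sigma_1)S_{1,m-1})|V_\dlm$ has kernel containing $\ker(\rho_m(S_{1,m-1})|V_\dlm)$, and its kernel dimension is at most $\dim\ker(\rho_m(S_{1,m-1})|V_\dlm)+\dim\ker(\rho_m(1-\sigma_{m-1}\cdots\sigma_1)|V_\dlm)=\dim\ker(\rho_m(S_{1,m-1})|V_\dlm)+n_2(\bq)$, by the rank-nullity subadditivity $\dim\ker(AB)\le\dim\ker A+\dim\ker B$. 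Combining: $n_1(\bq)\le\dim(\ker(\rho_m(S_{1,m-1})|V_\dlm))+n_2(\bq)$, which is the desired lower bound.

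The main obstacle I anticipate is making the rank-subadditivity step fully rigorous rather than just plausible: one needs $\dim\ker(\rho_m((1-\sigma_{m-1}\cdots\sigma_1)S_{1,m-1})|V_\dlm)=n_1(\bq)$ \emph{exactly}, not merely $\ge n_1(\bq)$. This follows because the right-hand side $\rho_m(S_{1,m-2}(1-\sigma_{m-1}\cdots\sigma_1^2))|V_\dlm$ is the composite of the invertible operator $\rho_m(S_{1,m-2})|V_\dlm$ with $\rho_m(1-\sigma_{m-1}\cdots\sigma_1^2)|V_\dlm$, so its kernel equals the kernel of the latter, which has dimension precisely $n_1(\bq)$ by Lemma~\ref{det2}. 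For the general inequality $\dim\ker(AB)\le\dim\ker A+\dim\ker B$ with $A=\rho_m(1-\sigma_{m-1}\cdots\sigma_1)|V_\dlm$ and $B=\rho_m(S_{1,m-1})|V_\dlm$ (both endomorphisms of the finite-dimensional space $V_\dlm$), one uses that $\ker(AB)/\ker B$ embeds into $\ker A$ via $B$. Then $n_1(\bq)=\dim\ker(AB)\le\dim\ker B+\dim\ker A=\dim(\ker(\rho_m(S_{1,m-1})|V_\dlm))+n_2(\bq)$, where $\dim\ker A=n_2(\bq)$ by Lemma~\ref{det1}. Rearranging gives the claim. Together with Proposition~\ref{prop upperbound} this proves Theorem~\ref{theo dim}.
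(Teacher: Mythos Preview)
Your proof is correct and follows essentially the same route as the paper: use Proposition~\ref{prop:main} to see that $\rho_m(S_{1,m-2})|V_{\dlm}$ is injective, apply Lemma~\ref{basic Lemm} to identify $\ker\big(\rho_m(1-\sigma_{m-1}\cdots\sigma_1)\rho_m(S_{1,m-1})|V_{\dlm}\big)$ with $\ker\big(\rho_m(1-\sigma_{m-1}\cdots\sigma_1^2)|V_{\dlm}\big)$, invoke Lemmas~\ref{det1} and~\ref{det2} for the dimensions $n_2(\bq)$ and $n_1(\bq)$, and conclude via $\dim\ker(AB)\le\dim\ker A+\dim\ker B$. Your explicit verification that $d\mid N(\dlm)$ (via Lemma~\ref{le:irrf}) before applying the ``moreover'' parts of Lemmas~\ref{det1} and~\ref{det2} is a point the paper leaves implicit.
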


\begin{proof}
Since $P_{\dll}(\bq)\ne0$ for all $\dll<\dlm$ with $|\dll|\ge 2$,
it follows from Proposition~\ref{prop:main} that
$\rho_m(S_{1,m-2})|V_{\dlm}$ is injective. Hence
\begin{align*}
&\ker(\rho_m(1-\sigma_{m-1}\cdots\sigma_2\sigma_1)\rho_m(S_{1,m-1})|V_{\dlm})\\
&\qquad =\ker(\rho_m(1-\sigma_{m-1}\cdots\sigma_2\sigma_1^2)|V_{\dlm})
\end{align*}
because of Lemma \ref{basic Lemm}.
Therefore
\begin{align*}
\dim(\ker(\rho_m(S_{1,m-1})|V_{\dlm}))
+\dim(\ker(\rho_m(1-\sigma_{m-1}\cdots\sigma_{2}\sigma_{1})| V_{\dlm}))\\
\ge \dim(\ker(\rho_m(1-\sigma_{m-1}\cdots\sigma_{2}\sigma_{1}^2)|V_{\dlm})),
\end{align*}
that is,
\begin{align*}
\dim(\ker(\rho_m(S_{1,m-1})|V_{\dlm}))
+n_2(\bq)\ge n_1(\bq)
\end{align*}
because of Lemmas~\ref{det1} and \ref{det2}.
This proves the Proposition.
\end{proof}

\begin{theo}\label{theo dim}
Let $\dlm\in \ndN_0^n$ with $m=|\dlm|\ge 2$. Assume that $P_{\dlm}(\bq)=0$
and $P_{\dll}(\bq)\ne0$ for all $\dll<\dlm$ with $|\dll|\ge 2$. Then
\begin{align*}
\dim(\ker(\rho_m(S_{1,m-1})|V_{\dlm}))=n_1(\bq)-n_2(\bq).
\end{align*}
\end{theo}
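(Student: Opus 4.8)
The plan is to combine the two bounds that have just been proved. Proposition~\ref{prop upperbound} gives, under the stated hypotheses $P_{\dlm}(\bq)=0$ and $P_{\dll}(\bq)\ne 0$ for all $\dll<\dlm$ with $|\dll|\ge 2$, the inequality
\begin{align*}
\dim(\ker(\rho_m(S_{1,m-1})|V_{\dlm}))\le n_1(\bq)-n_2(\bq),
\end{align*}
while Proposition~\ref{lower bound}, under exactly the same hypotheses, gives the reverse inequality
\begin{align*}
\dim(\ker(\rho_m(S_{1,m-1})|V_{\dlm}))\ge n_1(\bq)-n_2(\bq).
\end{align*}
Putting these together yields the claimed equality immediately.

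So the proof is essentially a one-line citation of the two preceding propositions. The only thing worth checking is that the hypotheses match verbatim, which they do: both propositions assume $|\dlm|\ge 2$, $P_{\dlm}(\bq)=0$, and $P_{\dll}(\bq)\ne 0$ for all $\dll<\dlm$ with $|\dll|\ge 2$, and these are precisely the hypotheses of Theorem~\ref{theo dim}. One should also note that the quantities $n_1(\bq)$ and $n_2(\bq)$ appearing in both propositions are the same integers, namely those defined in Lemma~\ref{le:n1n2}(2), so there is no ambiguity in the two bounds being about the same number.

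There is no real obstacle here; the substance of the argument has been distributed into the two propositions of Sections~\ref{upperbound} and~\ref{dim}. The upper bound relied on transporting the problem to the one-variable Laurent polynomial ring via the homomorphisms $\eta_1,\eta_2$ of Lemma~\ref{ring morph} and counting the multiplicity of the factor $t-Q_{\dlm}(\bq)$ in a Smith normal form (Lemma~\ref{le:rmul1}). The lower bound came from the factorization identity $(1-\sigma_m\cdots\sigma_1)S_{1,m}=S_{1,m-1}(1-\sigma_m\cdots\sigma_1^2)$ of Lemma~\ref{basic Lemm} together with the rank computations of Lemmas~\ref{det1} and~\ref{det2} and the injectivity of $\rho_m(S_{1,m-2})|V_{\dlm}$ guaranteed by Proposition~\ref{prop:main}. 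Thus the proof of Theorem~\ref{theo dim} consists solely in invoking Proposition~\ref{prop upperbound} and Proposition~\ref{lower bound} and concluding.

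\begin{proof}
By Proposition~\ref{prop upperbound}, $\dim(\ker(\rho_m(S_{1,m-1})|V_{\dlm}))\le n_1(\bq)-n_2(\bq)$. By Proposition~\ref{lower bound}, $\dim(\ker(\rho_m(S_{1,m-1})|V_{\dlm}))\ge n_1(\bq)-n_2(\bq)$. The claim follows.
\end{proof}
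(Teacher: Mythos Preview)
Your proof is correct and matches the paper's own argument exactly: the paper's proof is a single sentence citing Propositions~\ref{prop upperbound} and~\ref{lower bound}, just as you do. The additional commentary you give about how the hypotheses and the definitions of $n_1(\bq),n_2(\bq)$ line up is accurate and harmless, though the paper does not include it.
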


\begin{proof}
The theorem follows immediately from Propositions \ref{prop upperbound} and Proposition \ref{lower bound}.
\end{proof}

\begin{rema}
  Let $\dlm\in \ndN_0^n$ with $m=|\dlm|\ge 2$. Assume 
  $\ker (\rho _k(S_k)|V_{\dll})=0$ for all $\dll <\dlm $ with $k=|\dll |\ge 2$.
  Then
  $\ker (\rho _k(S_{1,k-1})|V_{\dll})=0$ for all $\dll <\dlm $ with $k=|\dll |\ge 2$
  by the definition of $S_k$. Moreover,
  $P_{\dll}(\bq)\ne 0$ for all $\dll <\dlm $ with $|\dll|\ge 2$ by
  Proposition~\ref{prop:main}(1). Assume now that $\dim (\ker (\rho _m(S_m)|V_{\dlm}))>0$.
  Then $P_{\dlm}(\bq)=0$ by Proposition~\ref{prop:main}(2).
  Thus
  $$\dim (\ker (\rho _m(S_m)|V_{\dlm}))=n_1(\bq)-n_2(\bq)$$
  by Theorem~\ref{theo dim} and by the bijectivity of the maps
  $\rho _k(S_{1,k-1})|V_{\dll}$ for $\dll <\dlm $ with $k=|\dll |\ge 2$.
\end{rema}

\begin{exam}
  Here we give an example of a Nichols algebra of diagonal type
  where in some degree one has two defining relations.

  Let $(V,c)$ be the two-dimensional braided vector space of diagonal type
  with basis $x_1, x_2$ and braiding matrix
  $\bq=(q_{ij})_{1\le i,j\le 2}\in (\fie^{\times})^2$, such that
   $$c(x_i\ot x_j)=q_{ij}x_j\ot x_i$$
  and $q_{11}=q_{22}=q_{12}q_{21}=q$,
  where $q\in \fie^{\times}$ is a primitive fifth root of unity.

   Let $\dlm=3\dle_1+4\dle_2$. Then $N(\dlm)=\gcd(6,12,12)=6$ and
   $$ Q_{\dlm}=p_{11}(p_{12}p_{21})^2p_{22}^2,\qquad Q_{\dlm}(\bq)=q^5=1. $$
   Thus $d=\ord (Q_{\dlm})=1$ and $d'=N(\dlm)/d=6$.

  Let us check that $P_{\dll}(\bq)\ne 0$ for all $\dll<\dlm$ with $|\dll|\ge 2$.
  \begin{align*}
   &P_{(3,3)}(\bq)=(3)_{q_{11}^2(q_{12}q_{21})^3q_{22}^2}=(3)_{q^7}=(3)_q(3)_{-q};\\
   &P_{(3,2)}(\bq)=P_{(2,3)}(\bq)=1-q_{11}(q_{12}q_{21})^3q_{22}^3=1-q^7=1-q^2;\\
   &P_{(3,1)}(\bq)=P_{(1,3)}(\bq)=1-q_{12}q_{21}q_{22}^2=1-q^3;\\
   &P_{(1,2)}(\bq)=P_{(2,1)}(\bq)=1-q_{11}q_{12}q_{21}=1-q^2;\\
   &P_{(1,1)}(\bq)=1-q_{11}=1-q;\\
   &P_{(2,4)}(\bq)=1-q_{22}^6(q_{12}q_{21})^4q_{11}=1-q^{11}=1-q;\\
   &P_{(1,4)}(\bq)=1-q_{22}^3q_{12}q_{21}=1-q^4;\\
   &P_{(2,2)}(\bq)=1+q_{22}(q_{12}q_{21})^2q_{11}=1+q^4;
  \end{align*}
  Moreover, $P_{(m,0)}(\bq),P_{(0,m)}(\bq)\ne 0$ for $m\in \{2,3,4\}$
  since $(4)_q^!\ne 0$. Hence $P_{\dll}(\bq)\ne 0$ for all $\dll <\dlm$ with
  $|\dll|\ge 2$.

  We now calculate $n_1(\bq)$ and $n_2(\bq)$. By definition,
 \begin{align*}
 n_1(\bq)&=\sum_{k|\gcd(2,4),k|6}\ell_{(2,4)/k}+
           \sum_{k|\gcd(3,3),k|6}\ell_{(3,3)/k}\\
        &=\ell_{(2,4)}+\ell_{(1,2)}+\ell_{(3,3)}+\ell_{(1,1)}=7
 \end{align*}
 since $\ell_{(2,4)}=2$, $\ell_{(3,3)}=3$, $\ell_{(1,2)}=\ell_{(1,1)}=1$.
 \begin{align*}
 n_2(\bq)&=\sum_{k|\gcd(3,4),k|6}\ell_{(3,4)/k}=\ell_{(3,4)}=5.
 \end{align*}
 Hence $\dim(\ker(\rho_7(S_{1,6})|V_{(3,4)}))=n_1(\bq)-n_2(\bq)=2$.
\end{exam}


\end{document}